\DeclareFontFamily{U}{BOONDOX-calo}{\skewchar\font=45 }
\DeclareFontShape{U}{BOONDOX-calo}{m}{n}{
  <-> s*[1.05] BOONDOX-r-calo}{}
\DeclareFontShape{U}{BOONDOX-calo}{b}{n}{
  <-> s*[1.05] BOONDOX-b-calo}{}
\DeclareMathAlphabet{\mathcalbis}{U}{BOONDOX-calo}{m}{n}
\SetMathAlphabet{\mathcalbis}{bold}{U}{BOONDOX-calo}{b}{n}
\DeclareMathAlphabet{\mathbcalboondox}{U}{BOONDOX-calo}{b}{n}
\newcommand{\pushright}[1]{\ifmeasuring@#1\else\omit\hfill$\displaystyle#1$\fi\ignorespaces}
\newcommand{\pushleft}[1]{\ifmeasuring@#1\else\omit$\displaystyle#1$\hfill\fi\ignorespaces}
\newcommand{\Z}{\mathbb{Z}}
\renewcommand{\P}{\mathbb{P}}
\newcommand{\E}{\mathbb{E}}
\def\U{\mathbb{U}}
\def\N{\mathbb{N}}
\newcommand{\R}{\mathbb{R}}
\newcommand{\D}{\mathbb{D}}
\newcommand{\cut}{\mathsf{Cut}}
\def\S{\mathbb{S}}
\def\d{{\rm d}}
\renewcommand{\epsilon}{\varepsilon}
\newcommand\Es[1]{\mathbb{E}\left[#1\right]}
\renewcommand\Pr[1]{\mathbb{P}\left(#1\right)}
\newtheorem{theorem}{Theorem}[]
\newtheorem{proposition}[theorem]{Proposition}
\newtheorem{lemma}[theorem]{Lemma}
\newtheorem{corollary}[theorem]{Corollary}
\theoremstyle{definition}
\newtheorem{remark}[theorem]{Remark}
\def\llbracket{[\hspace{-.10em} [ }
\def\rrbracket{ ] \hspace{-.10em}]}
\def\build#1_#2^#3{\mathrel{
\mathop{\kern 0pt#1}\limits_{#2}^{#3}}}
\newcommand{\Yn}{Y^{(n)}}
\newcommand{\Zn}{Z^{(n)}}
\newcommand{\Wn}{W^{(n)}}
\newcommand{\cA}{\mathcal{A}}
\newcommand{\Tc}{\mathcal{T}}
\newcommand{\Ttc}{\tilde{\mathcal{T}}}
\newcommand{\Loop}{\mathsf{Loop}}
\newcommand{\Scoop}{\mathsf{Scoop}}
\newcommand{\Loopb}{\overline{\mathsf{Loop}}}
\newcommand{\q}{\mathsf{q}}
\newcommand{\m}{\mathbf{m}}
\newcommand{\Pq}{\mathbb{P}_{\q}}
\newcommand\BGW{\textup{\textrm{BGW}}}
\def \W {\mathsf{W}}
\def \H {\mathsf{H}}
\def \C {\mathsf{C}}
\def \Hc {\mathsf{H}^{\circ}}
\def \Mc {\mathcal{M}}
\def \Vc {\mathcal{V}}
\def \Vcn {\mathcal{V}_{n}}
\def \Tn {\mathcal{T}_{n}}
\def \Tgn {\mathcal{T}_{\geq n}}
\def \dc {d^{\circ}}
\def \Trunk {\mathsf{Trunk}}
\title[{\sffamily The boundary of  random planar maps}  {\scshape via}  {\sffamily looptrees}]{
{\sffamily The boundary of  random planar maps} \\  {\scshape via} \\ {\sffamily looptrees}}
\author{Igor Kortchemski}
\address{CNRS \& CMAP, \'Ecole polytechnique}
\email{igor.kortchemski@math.cnrs.fr}
\author{Loïc Richier}
\address{CMAP, \'Ecole polytechnique}
\email{loic.richier@polytechnique.edu}
\subjclass{Primary 60F17 · 60C05 · 05C80; Secondary 60G50 · 60J80}
\keywords{Planar maps · random trees · looptrees · random walks with negative drift · spinal decomposition · scaling limit · invariance principle}
\begin{document}

\maketitle

\begin{abstract}
We study the scaling limits of looptrees associated with Bienaymé--Galton--Watson (BGW) trees, that are obtained by replacing every vertex of the tree by a ``cycle'' whose size is its degree. First, we consider BGW trees whose offspring distribution is critical and in the domain of attraction of a Gaussian distribution. We prove that the Brownian CRT is the scaling limit of the associated looptrees, thereby confirming  a prediction of \cite{CK14b}.
Then, we deal with BGW trees whose offspring distribution is critical and heavy-tailed. We show that the scaling limit of the associated looptrees is a multiple of the unit circle. This corresponds to a so-called condensation phenomenon, meaning that the underlying tree exhibits a vertex with macroscopic degree. Here, we rely on an invariance principle for random walks with negative drift, which is of independent interest.
Finally, we apply these results to the study of the scaling limits of large faces of Boltzmann planar maps. We complete the results of \cite{Ric17} and establish a phase transition for the topology of these maps in the non-generic critical regime.
\end{abstract}

\begin{figure}[h!]
\centering
\begin{minipage}{.5\textwidth}
  \centering
  \includegraphics[width=\linewidth]{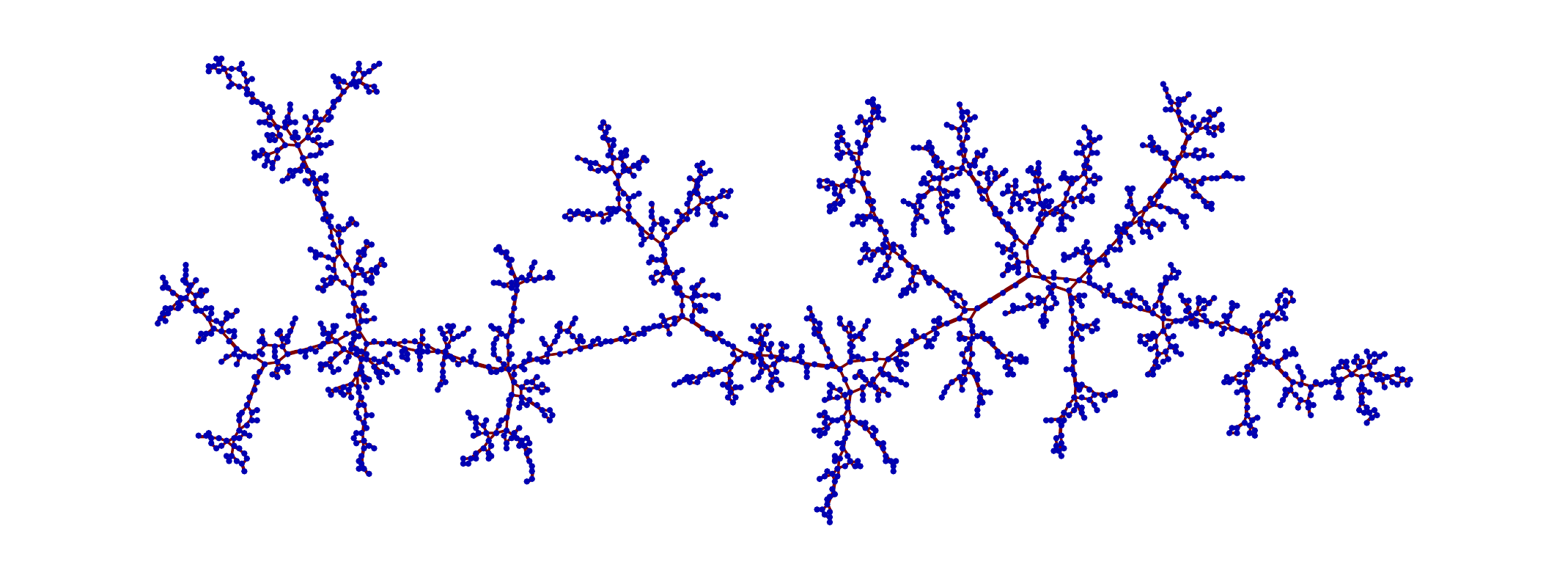}
  \label{fig:test1}
\end{minipage}\begin{minipage}{.5\textwidth} 
  \centering
  \includegraphics[width=\linewidth]{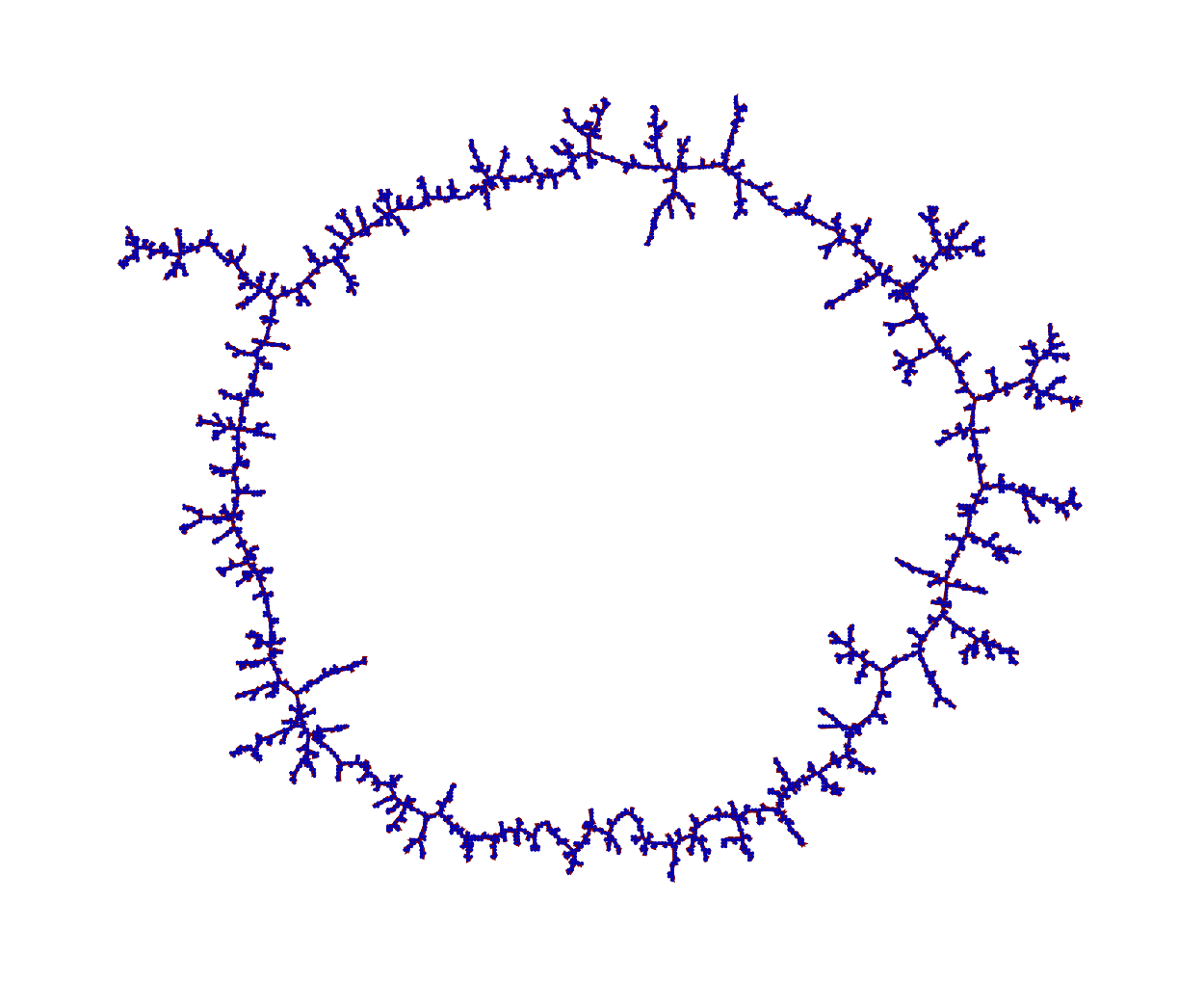}
  \label{fig:test2}
\end{minipage}
\caption{The looptree associated with a BGW tree whose offspring distribution is critical with finite variance (left) and subcritical with a heavy tail (right).}
\end{figure}

\section{Introduction}\label{sec:Intro}

\subsection{Scaling limits of random discrete looptrees}\label{sec:Looptreesdef} The purpose of this work is to study the scaling limits of discrete looptrees associated with large conditioned Bienaymé--Galton--Watson trees. By tree, we always mean \textit{plane tree}, that is a rooted ordered tree (with a distinguished corner and an ordering on vertices incident to each vertex). Given a probability measure $\mu$ on $\Z_{\geq 0}$, a Bienaymé--Galton--Watson tree with offspring distribution $\mu$ (in short, $\BGW_\mu$) is a random plane tree in which vertices have a number of offspring distributed according to $\mu$ all independently of each other (see Section \ref{sssec:trees} for more precise definitions).

Following \cite{CK14b}, with every plane tree $ \tau$ we associate a graph denoted by $\Loop(\tau)$ and called looptree. This graph has the same set of vertices as $\tau$, and for every vertices $u,v\in \tau$, there is an edge between $u$ and $v$ in $\Loop(\tau)$ if and only if $u$ and $v$ are consecutive children of the same parent in $\tau$, or if $v$ is the first or the last child of $u$ in $\tau$ (see Figure \ref{fig:loopintro} for an example). Roughly speaking, $\Loop(\tau)$ is obtained from $\tau$ by transforming vertices with degree $k$ into cycles of ``length'' $k$.

 \begin{figure}[!h]
 \begin{center}
 \includegraphics[width=  \linewidth]{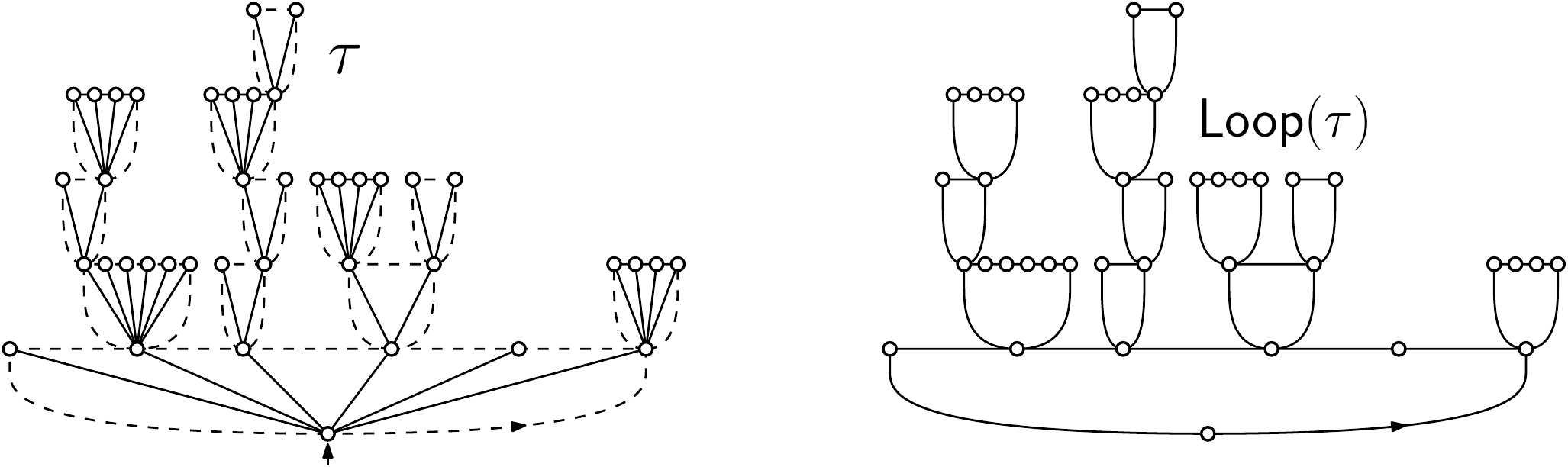}
 \caption{ \label{fig:loopintro}A plane tree $\tau$ and its associated looptree $ \mathsf{Loop}( \tau)$.}
 \end{center}
 \end{figure}

\smallskip

The study of scaling limits of discrete looptrees associated with $\BGW$ trees was initiated in \cite{CK14b}. The setting is the following: we let $\Tn$ be a $\BGW_\mu$ tree conditioned on having $n$ vertices, and aim at understanding the geometry of $\Loop(\Tn)$ when $n$ goes to infinity. More precisely, we view $\mathsf{Loop}( \Tn)$ as a compact metric space by endowing its vertices with the graph distance, and study the limit of rescaled versions of this metric space for the Gromov--Hausdorff topology. In the next part, for every $\lambda>0$ and every metric space $(E,d)$, the notation $\lambda\cdot E$ stands for $(E,\lambda \cdot d)$. We refer to \cite[Chapter 7.3]{burago_course_2001} for details on the Gromov--Hausdorff topology.

The work \cite{CK14b} deals with the case where $\mu$ is critical (i.e., has mean $m_\mu=1$) and falls within the domain of attraction of a stable law with parameter $\beta \in (1,2)$. The main result \cite[Theorem 4.1]{CK14b} shows that there exists a slowly varying function $L$ such that the convergence
	\begin{equation}\label{eqn:CVLoopCK}
		\frac{L(n)}{ n^{1/\beta}}\cdot \Loop(\Tn) \quad \xrightarrow[n\to\infty]{} \quad \mathscr{L}_{\beta},
	\end{equation}
	holds in distribution for the Gromov--Hausdorff topology, where $\mathscr{L}_{\beta}$ is the \textit{random stable looptree} with parameter $\beta\coloneqq(\alpha-1/2)^{-1}\in(1,2)$, also introduced in \cite{CK14b}. Recall that a function $L : \R_+ \rightarrow \R_+$ is slowly varying (at infinity) if for every $\lambda>0$ we have $L(\lambda x)/L(x) \rightarrow 1$ as $x \rightarrow \infty$ (see  \cite{BGT89} for more concerning slowly varying functions). 

This result was later completed  in \cite[Theorem 13]{CHK15}, in the case where $\mu$ is critical and has finite exponential moments. Then, there exists $C(\mu)>0$ such that the convergence \begin{eqnarray}\label{eqn:CVLoopCHK}  \frac{1}{\sqrt{n}}\cdot \mathsf{Loop}( \Tc_{n}) & \xrightarrow[n\to\infty]{} &    C(\mu) \cdot  \mathcal{T}_{ \mathbbm{e}}, \end{eqnarray}
holds in distribution for the Gromov--Hausdorff topology, where $\mathcal{T}_{ \mathbbm{e}}$ is Aldous' \textit{Brownian Continuum Random Tree} (CRT) coded by the normalized Brownian excursion $\mathbbm{e}$ \cite{Ald93} (see \cref{ssec:finalproof} for a formal definition).

Finally, although not stated in terms of looptrees, \cite{Kor15} treats the case where $\mu$ is subcritical (i.e., has mean $m_\mu<1$) and satisfies
\begin{equation}\label{eqn:HypLoc}
	\mu(i)= \frac{L(i)}{i^{\beta+1}},\quad i \in \N
\end{equation} for a certain $\beta>1$. Then the convergence
\begin{eqnarray}\label{eqn:CVLoopKor}  \frac{1}{n}\cdot \mathsf{Loop}( \Tc_{n}) & \xrightarrow[n\to\infty]{} &    (1-m_\mu) \cdot \mathbb{S}_{1},\end{eqnarray}
holds in distribution for the Gromov--Hausdorff topology, where $\mathbb{S}_1$ is the unit circle. This result stems from the existence of a \textit{condensation} phenomenon: when conditioned to be large, $\Tn$ exhibits a vertex with degree proportional to its total size~$n$. This was first observed in \cite{JS10,Jan12}, although the above result follows from the more precise analysis leaded in \cite{Kor15}.

\medskip

The contribution of this paper to the study of scaling limits of looptrees associated to $\BGW$ trees is twofold. In the first part (Theorem \ref{thm:circle}), we deal with subcritical offspring distributions that have a heavy tail, meaning that
\[\mu([i,\infty))=\frac{L(i)}{i^\beta}, \quad i\in\N\] for $L$ slowly varying and $\beta>1$. We emphasize that this condition is more general than the assumption \eqref{eqn:HypLoc} of \cite{Kor15}. However, this forces us to consider $\BGW$ trees conditioned to have \textit{at least} $n$ vertices.

In the second part (Theorem \ref{thm:CRT}) we improve the convergence \eqref{eqn:CVLoopCHK} established in \cite{CHK15} by considering critical offspring distributions $\mu$ falling within the domain of attraction of a Gaussian distribution, confirming thereby a prediction of \cite{CK14b}. 

The main motivation for these results comes from the study of scaling limits of large faces in random planar maps. As we will discuss at the end of this introduction, Theorems \ref{thm:circle} and \ref{thm:CRT} allow us to carry on the results of \cite{Ric17} dealing with the large scale geometry of these faces.

\paragraph*{Scaling limits of looptrees (circle regime).} Our first main result deals with looptrees associated to \textit{non-generic subcritical} $\BGW$ trees, meaning that the offspring distribution $\mu$ is subcritical and heavy-tailed.
 
\begin{theorem}
  \label{thm:circle}
   Let $\mu$ be a  offspring distribution with mean $m_\mu<1$. We assume that there exists $\beta>1$ and a slowly varying function $L$ such that, for every $i \geq 1$,
   \begin{equation}\label{eqn:HypTail}
   	\mu([i,\infty))= \frac{L(i)}{i^{\beta}}.
   \end{equation}
   Let also $J$ be the real-valued random variable such that $\Pr{J \geq x}=\left(\tfrac{1-m_\mu}{x}\right)^{\beta}$ for $x \geq 1-m_\mu$. Finally, for every $ n \geq 1$, let $ \Tgn$ be a $ \BGW_{ \mu}$ tree conditioned on having at least $n$ vertices. Then the convergence
\[ \frac{1}{n}\cdot \mathsf{Loop}( \Tgn)    \quad \xrightarrow[n\to\infty]{} \quad J \cdot \S_1\]
holds in distribution for the Gromov--Hausdorff topology, where $\S_1$ is the circle of unit length.
  \end{theorem}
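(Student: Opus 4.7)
The strategy is to exploit the condensation phenomenon: conditionally on being large, $\Tgn$ should exhibit a single vertex of macroscopic degree $\approx nJ$, so that its associated cycle dominates the looptree. I would encode $\Tgn$ via its Lukasiewicz path. Let $(\xi_i)_{i \geq 1}$ be i.i.d.\ with law $\mu$, set $S_k = (\xi_1-1)+\cdots+(\xi_k-1)$, and $\zeta = \inf\{k \geq 1 : S_k = -1\}$. Then $|\Tgn|$ has the law of $\zeta$ under $\Pr{\cdot \mid \zeta \geq n}$ and the sequence of degrees in depth-first order is distributed as $(\xi_1,\dots,\xi_\zeta)$. Since the drift $\E[\xi_1-1] = m_\mu - 1$ is negative and the increments have a regularly varying tail by \eqref{eqn:HypTail}, the event $\{\zeta \geq n\}$ falls squarely within the big-jump principle for heavy-tailed negative-drift walks: it is typically realized by a single jump of size $\geq (1-m_\mu)n$, with excess of Pareto type.

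The core technical task is to establish, under $\Pr{\cdot \mid \zeta \geq n}$, the following picture for the Lukasiewicz path: \textbf{(a)} the largest jump $\Delta_n = \max_{1 \leq i \leq \zeta} \xi_i$ satisfies $\Delta_n/n \to J$ in distribution; \textbf{(b)} this maximum is attained at a unique index $I_n$, and the second-largest jump is $o_\P(n)$ (of order $n^{1/\beta}$ up to slowly varying corrections); \textbf{(c)} after excising the big jump, the remaining path is essentially a random walk of length $\zeta \approx n$ killed at its first visit to $-1$. These distributional inputs are the heart of the proof and would rely on the invariance principle for random walks with negative drift promised in the abstract, combined with standard subexponential large-deviation estimates (the tail asymptotics $\Pr{\zeta \geq n} \sim c \cdot n\, \mu([(1-m_\mu)n, \infty))$ directly produces the Pareto law of $J$).

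Next, I translate these facts into tree language. The big jump of \textbf{(a)} corresponds to the out-degree $k_n^\star = \Delta_n$ of a distinguished vertex $v_n^\star \in \Tgn$, discovered at position $I_n$ in depth-first order. In the looptree, $v_n^\star$ becomes a cycle $C_n^\star$ of length $k_n^\star = nJ + o_\P(n)$. Every other vertex of $\Tgn$ lies either in one of the $k_n^\star$ subtrees hanging off the children of $v_n^\star$, or in a ``bush'' grafted along the ancestral spine from the root to $v_n^\star$. Using \textbf{(b)} and \textbf{(c)} together with a spinal decomposition, one shows that the spine is typically short and that $\max_i |\mathrm{bush}_i| = o_\P(n)$. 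The trivial bound $\operatorname{diam}(\Loop(\tau)) \leq |\tau|$ then gives that the diameter of each bush-looptree is $o_\P(n)$ as well.

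To conclude the Gromov--Hausdorff convergence, I would exhibit a correspondence between $\Loop(\Tgn)$ and the cycle $k_n^\star \cdot \S_1$ by sending every vertex of the looptree to the nearest point of $C_n^\star$ (the root of the bush containing it, naturally identified with a point on $k_n^\star \cdot \S_1$). The distortion is bounded by twice the maximum bush-looptree diameter, hence $o_\P(n)$; combined with $k_n^\star/n \to J$, this yields $\frac{1}{n} \cdot \Loop(\Tgn) \to J \cdot \S_1$ for the Gromov--Hausdorff topology. The main obstacle is step \textbf{(c)}: isolating the single big jump from a negative-drift walk conditioned to survive an atypically long time, while retaining uniform control on all remaining increments, is precisely the delicate invariance principle for such walks that the authors announce as a result of independent interest.
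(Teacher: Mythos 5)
Your proposal follows essentially the same route as the paper: encode $\Tgn$ by its {\L}ukasiewicz path, viewed as a negative-drift walk with regularly varying steps conditioned on $\{\zeta\geq n\}$, isolate a single macroscopic jump via the invariance principle (the paper's \cref{thm:dTV} and \cref{thm:scalinglimit}), identify the condensation vertex $v_n^\star$ with $k_{v_n^\star}(\Tgn)/n\to J$ and all components of $\Tgn\setminus\{v_n^\star\}$ of size $o_{\P}(n)$, and conclude by a Gromov--Hausdorff correspondence onto the big cycle whose distortion is controlled by the maximal component size. One slip worth correcting: the tail asymptotic is $\Pr{\zeta\geq n}\sim \Es{\zeta}\,\Pr{X_1\geq (1-m_\mu)n}$ (Borovkov--Borovkov; Denisov--Shneer), with constant prefactor $\Es{\zeta}$ and \emph{no} factor of $n$; with your extra factor $n$ the ratio $\Pr{\zeta\geq yn}/\Pr{\zeta\geq n}$ would tend to $y^{1-\beta}$ instead of $y^{-\beta}$, and the exponent in the law of $J$ would come out wrong. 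The Pareto law of $J$ is most safely read off the overshoot of the conditioned jump, $\Pr{X_1\geq xn}/\Pr{X_1\geq (1-m_\mu)n}\to\left(\tfrac{1-m_\mu}{x}\right)^{\beta}$, which is exactly how it arises in the paper.
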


This theorem roughly says that in the tree $ \Tgn$, for large $n$, there is a unique vertex with degree proportional to the total number of vertices. This phenomenon, known as \textit{condensation}, has already been observed under various forms in \cite{JS10,Jan12,Kor15}. One may hope to obtain a stronger result by considering $\BGW$ trees conditioned on having a fixed size, as in \eqref{eqn:CVLoopKor}. The rub is that without additional regularity assumptions on the offspring distribution $\mu$, it is not clear whether \cref{thm:circle} holds or not. The strategy of \cite{Kor15} is based on the so-called ``one big jump principle" of \cite{AL11}, that holds provided that $\mu$ is $(0,1]$\textit{-subexponential}. This means that if $X_1, \ldots, X_n$ are i.i.d.\ with law $\mu$, then for every $n\in\N$, \begin{equation}\label{eqn:DeltaSubExp}
	\Pr{S_n=x} \quad \underset{x\rightarrow \infty}{\sim} \quad n\Pr{X=x}.
\end{equation} However, there are offspring distributions satisfying \eqref{eqn:HypTail} but not \eqref{eqn:DeltaSubExp} (for instance, one can choose $\mu(2k)=k^{-\beta-1}$ and $\mu(2k+1)=\exp(-k)$ for $k\in\N$), and investigating whether \eqref{eqn:CVLoopKor} holds under a mere assumption on the tail distribution of $\mu$ is an interesting open question. Moreover, we do not know if the probability measure involved in our application to random planar maps satisfies \eqref{eqn:HypLoc} nor \eqref{eqn:DeltaSubExp} (see \cref{rem} in Section \ref{sec:Maps} for more on this). While our result is weaker than that of \cite{Kor15}, our assumptions are more general: roughly speaking, Theorem \ref{thm:circle} trades  the weaker conditioning for the stronger regularity assumption on~$\mu$.

   The proof of \cref{thm:circle} is based on an invariance principle for random walks with negative drift (\cref{thm:dTV}), which extends a result of Durrett \cite{Dur80} and  is of independent interest.
   
\paragraph*{Scaling limits of looptrees (CRT regime).} We now present our second main result, that deals with looptrees associated to $\BGW$ trees whose  offspring distribution $\mu$ is critical (i.e., has mean~$1$) and is in the domain of attraction of a Gaussian distribution. This means that the variance $\sigma_{\mu}^{2}$ of $\mu$ is either  finite, or there exists a slowly varying function $L$ such that $\mu([i,\infty))= {L(i)}/{i^{2}}$ for $i \geq 1$ (see \cite{IL71} for background on domains of attraction of stable laws).  

The scaling sequence $(B_{n} : n\geq 1)$ that will be involved in our limit theorem is defined as follows: if $X_{1}, X_{2}, \ldots$ are i.i.d.\ random variables with distribution $\mu$, then $(X_{1}+\cdots+X_{n}-n)/B_{n}$ converges in distribution to $\sqrt{2}$ times a standard Gaussian random variable (we use this normalization to keep the same definition of $B_{n}$ as in \cite{CK14b}). When $\sigma_{\mu}^{2}<\infty$, we may take $B_{n}=\sigma_{\mu} \sqrt{n/2}$, while when $\sigma_{\mu}^{2}=\infty$ there exists a slowly varying function $\ell$ such that $\ell(n) \rightarrow \infty$ and $B_{n}=\ell(n) \sqrt{n}$.

We finally, set \begin{equation}\label{eqn:CstSigma}
	c_\mu\coloneqq 
	\begin{cases}
	\quad \frac{1}{4}\left( \sigma_\mu^2+ 4- \mu(2 \Z+) \right) & \textrm{if } \sigma^{2}_{\mu}<\infty \\
	\quad \frac{1}{2} & \textrm{if } \sigma^{2}_{\mu}=\infty.
	\end{cases}
\end{equation} where $\mu(2 \Z+) =\mu(0)+\mu(2)+\cdots$.

\begin{theorem} \label{thm:CRT}
 Let $\mu$ be an offspring distribution with mean $m_\mu=1$ and in the domain of attraction of a Gaussian distribution. For every $ n \geq 1$, let $ \Tc_{n}$ be a $ \BGW_{ \mu}$ tree conditioned on having $n$ vertices.  Then the convergence
  \begin{eqnarray*}  \frac{1}{B_{n}}\cdot \mathsf{Loop}( \Tc_{n}) & \xrightarrow[n\to\infty]{} &    c_{\mu} \cdot  \sqrt{2} \mathcal{T}_{\mathbbm{e}} \end{eqnarray*}
  holds in distribution for the Gromov--Hausdorff topology, where $ \mathcal{T}_{ \mathbbm{e}}$ is the Brownian CRT.
  \end {theorem}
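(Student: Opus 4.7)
The plan is to encode looptree distances via the Łukasiewicz walk of $\mathcal{T}_n$ and combine the classical invariance principle for the rescaled walk with a law of large numbers along a typical spine. List the vertices of a plane tree $\tau$ as $u_0 = \rho, u_1, \ldots, u_{|\tau|-1}$ in depth-first order, and for each $i$ denote by $w_0 = \rho, w_1, \ldots, w_{h(u_i)} = u_i$ the ancestral chain of $u_i$, by $K_j$ the number of children of $w_j$, and by $c_j \in \{1, \ldots, K_j\}$ the position of $w_{j+1}$ among them. Since the cycle of $\mathsf{Loop}(\tau)$ around $w_j$ has length $K_j + 1$ and the shortest path from $w_j$ to its $c_j$-th child along this cycle has length $\Phi(K_j, c_j) := \min(c_j, K_j + 1 - c_j)$, one obtains the exact identity
\[ d_{\mathsf{Loop}(\tau)}(\rho, u_i) = \sum_{j=0}^{h(u_i)-1} \Phi(K_j, c_j), \]
together with an analogous expression for $d_{\mathsf{Loop}(\tau)}(u_i, u_\ell)$ including a single extra cycle contribution at the lowest common ancestor of $u_i$ and $u_\ell$. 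Both expressions are measurable functions of the Łukasiewicz walk of $\tau$.

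Summing $\Phi$ against the size-biased degree distribution $\widehat\mu(K) = K \mu(K)$, which governs the asymptotic law of the sequence $((K_j, c_j))_j$ along a spine of $\mathcal{T}_n$, yields
\[ \mathbb{E}[\Phi(K, c)] = \sum_{K \geq 1} \mu(K) \sum_{c=1}^K \Phi(K, c) = \tfrac{1}{4}\bigl(\sigma_\mu^2 + 4 - \mu(2\mathbb{Z}_+)\bigr) = c_\mu \]
in the finite-variance regime. The Aldous--Duquesne invariance principle gives $B_n^{-1} H_{\mathcal{T}_n}(\lfloor n \cdot \rfloor) \to \sqrt{2}\,\mathbbm{e}(\cdot)$ uniformly on $[0, 1]$, and a law of large numbers on partial sums of length $O(B_n)$ along the spine then yields
\[ \frac{1}{B_n} d_{\mathsf{Loop}(\mathcal{T}_n)}(\rho, u_{\lfloor n t \rfloor}) = \frac{1}{B_n} \sum_{j = 0}^{H_{\mathcal{T}_n}(\lfloor n t \rfloor) - 1} \Phi(K_j, c_j) \;\xrightarrow[n \to \infty]{}\; c_\mu \sqrt{2}\,\mathbbm{e}(t), \]
jointly in $t \in [0, 1]$, in the finite-variance case. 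The same argument applied to pairwise distances is expected to upgrade this to Gromov--Hausdorff convergence via the standard characterization of the CRT by its contour function.

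The main obstacle lies in the infinite-variance case $\sigma_\mu^2 = \infty$, where $\Phi(K, c)$ fails to be uniformly integrable under size-biasing and the naive LLN breaks down. The plan is to truncate: for a carefully chosen threshold $M_n \to \infty$ with $M_n = o(B_n)$, split $\Phi(K, c) = \Phi(K, c)\mathbf{1}_{\{K \leq M_n\}} + \Phi(K, c)\mathbf{1}_{\{K > M_n\}}$. For the truncated part, a quantitative LLN along the spine, tuned to the scaling $B_n = \ell(n)\sqrt{n}$, should produce the average contribution $\tfrac{1}{2} H_{\mathcal{T}_n}$ at the limit, giving $c_\mu = \tfrac{1}{2}$. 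For the untruncated part, one must show that the contribution of the rare large-degree ancestors is $o(B_n)$ uniformly in $t$, using that in the Gaussian domain of attraction the largest jump of the Łukasiewicz walk is itself $o(B_n)$ with high probability. This delicate control of the large-degree vertices along a typical spine is the technical heart of the proof.

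Combining the joint convergence of all pairwise distance profiles with standard tightness arguments for trees coded by continuous functions then yields $B_n^{-1} \cdot \mathsf{Loop}(\mathcal{T}_n) \to c_\mu \sqrt{2}\,\mathcal{T}_{\mathbbm{e}}$ in the Gromov--Hausdorff topology, as required.
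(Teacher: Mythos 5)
Your starting point coincides with the paper's: the decomposition of $\dc_{\tau}(\varnothing,u_i)$ as $\sum_{j}\min(c_j,K_j+1-c_j)$ over the ancestors of $u_i$ (every vertex of $\Loop(\tau)$ is a cut point, so shortest paths concatenate cycle by cycle), and the computation $\sum_k\mu(k)\sum_{c=1}^k\min(c,k+1-c)=\tfrac14(\sigma_\mu^2+4-\mu(2\Z+))$, are exactly what appears in the proof of \cref{lem:SpinalDecomposition}. But the two steps you treat as routine are where the work lies, and one of your displayed limits is wrong. First, the claim that $((K_j,c_j))_j$ along the ancestral line of a typical vertex of $\Tn$ is governed by the size-biased law, in a form strong enough to run a law of large numbers over the $\H_{\lfloor nt\rfloor}(\Tn)\to\infty$ summands, is not a soft fact: the conditioning $\{|\Tc|=n\}$ reweights the spine, and making this rigorous is precisely the spinal decomposition (\cref{thm:trunk2}), proved via \cref{prop:bias}, the local limit theorem and estimates uniform in the height. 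Second, your invariance principle for the height process has the wrong normalization: the correct statement is $\frac{B_n}{n}\H_{\lfloor nt\rfloor}(\Tn)\to\sqrt2\,\mathbbm{e}_t$, not $\frac{1}{B_n}\H_{\lfloor nt\rfloor}(\Tn)\to\sqrt2\,\mathbbm{e}_t$; the two differ by the factor $n/B_n^2\to2/\sigma_\mu^2$. Since the number of summands in your identity is exactly $\H_{\lfloor nt\rfloor}(\Tn)$, this factor enters the constant: redone with the correct normalization, your argument yields $\frac{2c_\mu}{\sigma_\mu^2}\sqrt2\,\mathbbm{e}_t$ rather than $c_\mu\sqrt2\,\mathbbm{e}_t$, so the derivation of the constant is broken and must be reconciled. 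Test it on $\mu=\tfrac12(\delta_0+\delta_2)$, where every summand equals $1$ and $\Loop(\Tn)$ is isometric to $\Tn$ up to an additive error of $1$. Note that the paper instead compares $\Hc$ to the {\L}ukasiewicz path $\W$ (\cref{lem:tightfd}~(ii) and \cref{lem:SpinalDecomposition}), which always lives at scale $B_n$ and is what unifies the two variance regimes.

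Two further gaps. In the infinite-variance case your plan is off target: there $\H(\Tn)$ is of order $n/B_n=o(B_n)$, so anything proportional to the height vanishes after division by $B_n$, and the truncated sums you propose have a divergent truncated mean, so "the truncated part produces $\tfrac12\H$" cannot be right. What is true is that $\dc_{\Tn}(\varnothing,\Vcn)\sim\tfrac12\W_{U^{n}}(\Tn)$, the factor $\tfrac12$ arising from a ratio law of large numbers for i.i.d.\ variables in the Cauchy domain of attraction ($\P(R_1\ge k)\sim L(k)/k$ versus $\P(\min(L_1,R_1)\ge k)\sim L(k)/(2k)$, together with the slow variation of the truncated means), not from a truncation at level $o(B_n)$ combined with a control of large degrees. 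Finally, tightness of $(\tfrac1{B_n}\Hc_{\lfloor nt\rfloor}(\Tn))_{0\le t\le1}$ is not standard and does not follow from convergence of finitely many distance profiles: the paper needs the deterministic bound $|\Hc_i-\Hc_j|\le(\W_j-\W_i)+(\H_j-\H_i)$ for ancestor pairs (\cref{lem:particulierementutile}) together with a mirror-tree argument to reduce arbitrary pairs to ancestor pairs. Without an argument of this kind, finite-dimensional convergence does not upgrade to Gromov--Hausdorff convergence.
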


  The reason why  $\sqrt{2} \mathcal{T}_{   \mathbbm{e}}$  appears (instead of  $ \mathcal{T}_{\mathbbm{e}}$) simply comes from our normalisation convention for the scaling sequence $(B_{n})$.
  
  We refer to \cite{AM08,Bet15,CHK15,JS15,PSW16,Car16,CM16,stufler2016limits,Stu17} for a zoology of random discrete structures which are not trees, but whose scaling limits are $\mathcal{T}_{ \mathbbm{e}}$, the Brownian CRT.

As we have mentioned, when $\sigma_{\mu}^{2}<\infty$, the  result of \cref{thm:CRT} was already established in \cite[Theorem 13]{CHK15} under the existence of $\lambda>0$ such that $ \sum_{k \geq 0}\mu(k) e ^{\lambda k} < \infty$.
The improvement in \cref{thm:CRT} is important in three directions. First, the existence of small exponential moments does not hold \textit{a priori} in our application to random planar maps. Second, it is often challenging to relax an assumption involving a finite exponential moment condition to a finite variance condition: in particular, the proof of \cref{thm:CRT} uses different techniques than in \cite[Theorem 13]{CHK15}, and new ideas. We emphasize that until now, convergence towards the Brownian CRT of similar rescaled discrete weighted tree-like structures has mostly been obtained under finite exponential moment conditions (see \cite[Theorems 1, 13 and 14]{CHK15}, \cite[Theorem 5.1]{PSW16}, \cite[Theorem 6.60]{stufler2016limits}, and in particular the discussion in \cite[Section 3.3]{stefansson2017geometry}). Third, the method is robust, as it allows to treat the case $\sigma_{\mu}^{2}=\infty$, which was left as an open question in \cite{CK14b}.

The reason why the expression of $c_{\mu}$ depends on the finiteness of $\sigma_{\mu}^{2}$ is the following: when $\sigma_{\mu}^{2}<\infty$, the height of $\Tc_{n}$ and the typical sizes of loops of $\mathsf{Loop}( \Tc_{n})$ are of the same order $\sqrt{n}$, while when $\sigma_{\mu}^{2}=\infty$, the height of $\Tc_{n}$   (of order $ \tfrac{n}{B_{n}}$) is negligible compared to the typical size of loops in  $\mathsf{Loop}( \Tc_{n})$ (of order $B_{n}$), so that asymptotically distances in $\Tc_{n}$ do not contribute to distances in $ \mathsf{Loop}( \Tc_{n})$, in  contrast with the finite variance case.

Note that the classification of the scaling limit of looptrees associated with conditioned critical $\BGW$ trees whose offspring distribution is in the domain of attraction of a stable distribution of index $\alpha \in (1,2]$ is now complete: \cite[Theorem 4.1]{CK14b} covers the case $\alpha \in (1,2)$, Theorem~\ref{thm:CRT} covers the case $\alpha=2$ (both with finite variance and infinite variance).

\subsection{A spinal decomposition.}
\label{ss:spinal}

The proof of \cref{thm:CRT} relies on a spinal decomposition which is interesting in its own, and that we now detail. We refer to Section \ref{sec:Trees} for definitions concerning plane trees. First, let us introduce some notation. If $\tau$ is a tree and $u$ a vertex of $\tau$, we denote by $\Trunk(\tau,u)$ the tree made by vertices that are ancestors of $u$ in $\tau$, together with their neighbours (see Figure \ref{fig:Trunk} and Section \ref{sec:Th2} for details). If $\tau$ is a tree, we also denote by $\Lambda(\tau)$ its number of leaves (that is, childless vertices).

If $\mu$ is an offspring distribution with mean $1$, we let $\mu^{\ast}$ be the size-biased version of $\mu$ defined~by \[\mu^{\ast}(j)=j\mu(j), \quad j\geq 0.\] Then, denote by $\Trunk^{\ast}_{h}$ the ``size-biased trunk'' of height $h$ defined as follows: it is a tree made of a \textit{spine} with vertices $v^*_0$,$v^*_1$,\ldots,$v^{\ast}_{h-1}$ having independent number of children distributed according to $\mu^{\ast}$. For every $0 \leq i \leq h-1$, among all children of $v^{\ast}_{i}$, the child belonging to the spine is uniform, while its other children are leaves. Also, $v^{\ast}_{h}$ is a leaf. 

Note that $\Trunk^{\ast}_{h}$ may be seen as part of the spine of the infinite BGW tree conditioned to survive, that was first defined in \cite{Kes86}.

 \begin{figure}[!h]
 \begin{center}
 \includegraphics[width= .5 \linewidth]{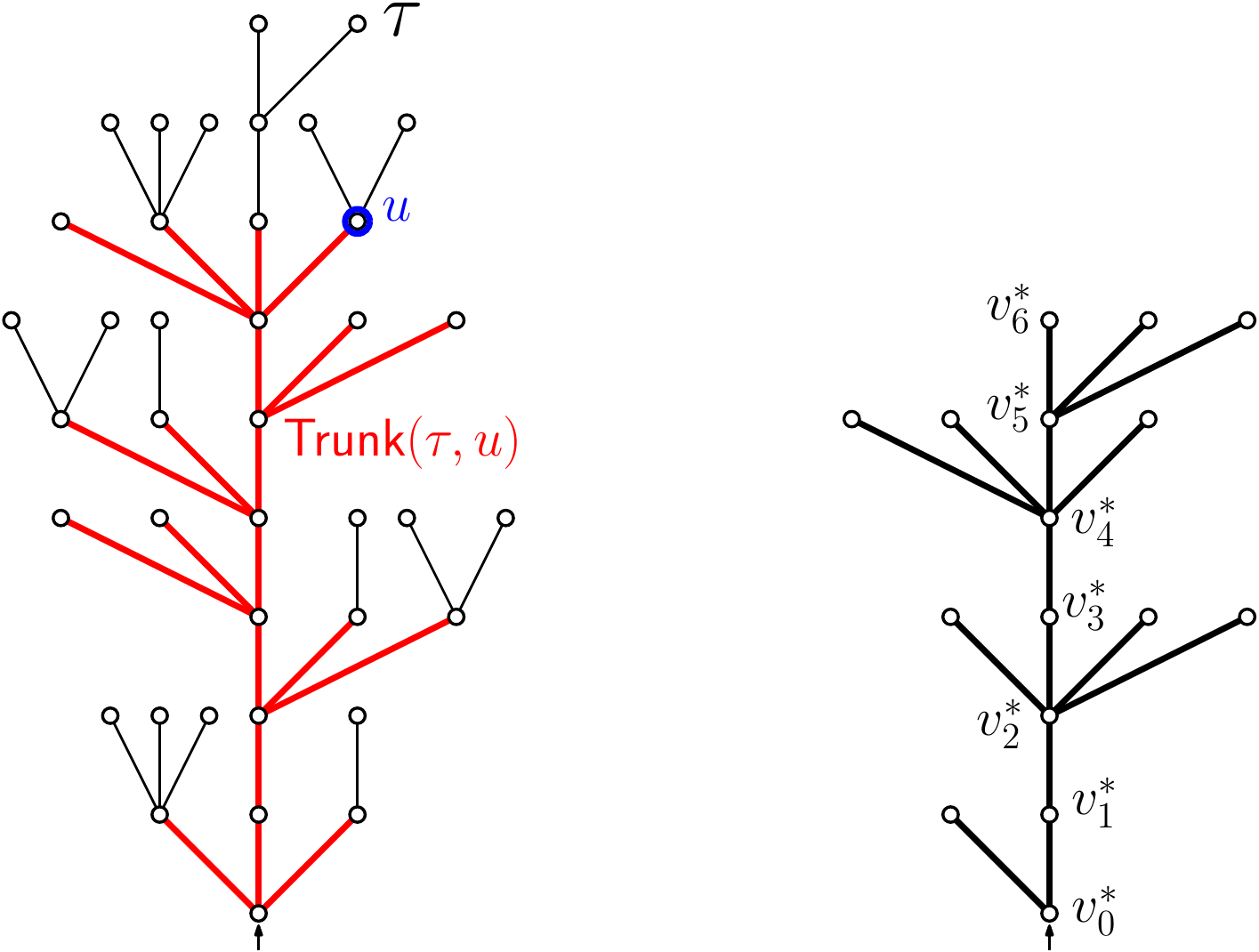}
 \caption{ \label{fig:Trunk}Left: A plane tree $\tau$ with a marked vertex $u$ and the associated ``trunk" $\Trunk(\tau,u)$ (in bold red). Right: a realization of $\Trunk^{\ast}_{6}$, with $\Lambda(\Trunk^{\ast}_{6})=10$ leaves.}
 \end{center}
 \end{figure}

We refer to \cite{Lin92} or \cite[Section~2]{dHo12} for background concerning the total variation distance, that we denote by $d_{\mathrm{TV}}$.

 \begin{theorem}
 \label{thm:trunk2}
 Let $\mu$ be an  offspring distribution with mean $m_\mu=1$ that is in the domain of attraction of a Gaussian distribution. For every $n\geq 1$, let $\Tc_n$ be a $\BGW_\mu$ tree conditioned to have $n$ vertices. 
 \begin{enumerate}
 \item[(i)] Fix $t>0$ and let $ \Vc_{n} ^{t}$ be a vertex chosen uniformly at random in $\Tc_n$ among all those at height $\lfloor t \frac{n}{B_{n}} \rfloor$. Then  \[d_{\textup{TV}}\left( \Trunk( \mathcal{T}_{n},\Vc_{n} ^{t}), \Trunk^{\ast}_{\lfloor t  \frac{n}{B_{n}} \rfloor }\right)  \quad \xrightarrow[n \rightarrow \infty]{} \quad 0.\]
 \item[(ii)]  Let $ \Vc_{n}$ be a vertex chosen uniformly at random in $ \mathcal{T}_{n}$, and $ \mathcal{R}$ be a random variable with density $2x e^{-x^{2}} \mathbbm{1}_{x \geq 0} {\d}x$. Then \[d_{\textup{TV}}\left( \Trunk( \mathcal{T}_{n}, \Vc_{n} ),\Trunk^{\ast}_{\lfloor  \mathcal{R}  \frac{n}{B_{n}} \rfloor }\right)  \quad \xrightarrow[n \rightarrow \infty]{} \quad 0.\]
 \end{enumerate}
  \end{theorem}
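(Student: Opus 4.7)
The plan is to combine a many-to-one identity with Gnedenko's local limit theorem. First I would establish that for any critical offspring distribution $\mu$ and any non-negative functional $F$ on trunks of height $h$,
\[
\mathbb{E}\biggl[\sum_{v\in\Tc,\,|v|=h}F(\Trunk(\Tc,v))\biggr]
=\mathbb{E}\bigl[F(\Trunk^{\ast}_h)\bigr],
\]
both sides evaluating to $\prod_{i=0}^{h-1}\mu(c_i)$ on each spinal offspring profile $(c_0,\ldots,c_{h-1})$: the size-biasing factor $c_i$ in $\mu^{\ast}$ is cancelled by the uniform $1/c_i$ choice of the spinal child among its siblings. Given a trunk $T$ of height $h$, completing it into a tree of size $n$ amounts to grafting independent $\BGW_\mu$ trees at its $\Lambda(T)$ leaves, with sizes summing to $n-|T|+\Lambda(T)$. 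Writing $(\xi_j)$ for i.i.d.\ copies of $|\BGW_\mu|$ and applying the many-to-one identity to both numerator and denominator of the conditional probability, one obtains
\[
\mathbb{P}\bigl(\Trunk(\Tc_n,\Vc_n^t)=T\bigr)
=\mathbb{P}(\Trunk^{\ast}_h=T)\cdot\frac{\Psi_n(T)}{\mathbb{E}[\Psi_n(\Trunk^{\ast}_h)]},\quad
\Psi_n(T):=\mathbb{P}\!\left(\sum_{j=1}^{\Lambda(T)}\xi_j=n-|T|+\Lambda(T)\right),
\]
and therefore $2\,d_{\textup{TV}}(\Trunk(\Tc_n,\Vc_n^t),\Trunk^{\ast}_h)=\mathbb{E}\bigl|\Psi_n(\Trunk^{\ast}_h)/\mathbb{E}\Psi_n(\Trunk^{\ast}_h)-1\bigr|$, reducing (i) to the concentration of $\Psi_n(\Trunk^{\ast}_h)$ around its mean.

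To analyse $\Psi_n$ I would apply Kemperman's formula, $\Psi_n(T)=\tfrac{\Lambda(T)}{n-|T|+\Lambda(T)}\,\mathbb{P}(S_{n-|T|+\Lambda(T)}=-\Lambda(T))$, where $(S_k)$ is the \L ukasiewicz walk with step law $\mu(\cdot+1)$, and invoke Gnedenko's uniform local limit theorem in the form $B_m\,\mathbb{P}(S_m=-k)\to g(k/B_m)$ with $g(x)=(2\sqrt\pi)^{-1}e^{-x^2/4}$ the density of $\sqrt2\,\mathcal N(0,1)$. The deterministic choice $h=\lfloor tn/B_n\rfloor$, combined with $|\Trunk^{\ast}_h|=o_{\mathbb P}(n)$ and $\Lambda(\Trunk^{\ast}_h)/B_n\to 2t$ in probability (established by a weak law of large numbers in the finite-variance case, and by a Karamata computation for the truncated mean of $\mu^{\ast}$ together with the Feller normalisation of $B_n$ in the infinite-variance case, where $\mu^{\ast}$ has tail $\sim 2L(j)/j$ and lies in the domain of attraction of a one-sided stable law of index~$1$), yields $\Psi_n(\Trunk^{\ast}_h)\sim (2t/n)\,g(2t)$ in probability. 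The universal upper bound $\Psi_n(T)\leq C\Lambda(T)/(nB_n)$ coming from the LLT gives $L^2$-boundedness of the ratio, hence uniform integrability, and thus upgrades the in-probability convergence to $L^1$. This proves~(i).

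For~(ii), I would combine~(i) with the weak convergence $|\Vc_n|B_n/n\Rightarrow\sqrt2\,\mathbbm{e}_U=:\mathcal R$, whose density is $2xe^{-x^2}$; this follows from the convergence of the rescaled height function of $\Tc_n$ to the normalised Brownian excursion $\mathbbm{e}$, together with the classical computation of the density of $\mathbbm{e}_U$. Decomposing
\[
\mathcal L(\Trunk(\Tc_n,\Vc_n))=\sum_h\mathbb{P}(|\Vc_n|=h)\,\mathcal L(\Trunk(\Tc_n,\Vc_n^{hB_n/n}))
\]
and similarly for $\Trunk^{\ast}_{\lfloor\mathcal R n/B_n\rfloor}$, the triangle inequality reduces~(ii) to~(i) uniformly on compact subsets of $t\in(0,\infty)$ together with a local limit theorem for the height distribution of a uniform vertex (matching the density $2xe^{-x^2}$), the atypical heights being handled by a rough tail bound.

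The main obstacle is the concentration $\Lambda(\Trunk^{\ast}_h)/B_n\to 2t$ in the infinite-variance case, where $\mu^{\ast}$ has infinite mean: the logarithmic centring of the partial sums of $\mu^{\ast}$ and the slow variation hidden in $B_n$ conspire through Karamata's theorem to produce the same deterministic limit $2t$ as in the finite-variance case. Securing the LLT uniformly in $(m,k)$ fluctuating around $(n,2tB_n)$, and establishing the local limit theorem for the height of a uniform vertex in step~(ii), are secondary technical ingredients.
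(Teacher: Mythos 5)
Your strategy is the same as the paper's: the many-to-one identity together with the grafting decomposition and Kemperman's formula is exactly the paper's Proposition 4 (\cref{prop:bias}), the concentration $\Lambda(\Trunk^{\ast}_{\lfloor tn/B_n\rfloor})/B_n\to 2t$ is its Lemma 5 (\cref{lem:drift}, proved there via Laplace transforms rather than Karamata, but to the same effect), and part (ii) is obtained in both cases by integrating the uniform-in-$t$ version of (i) against the height local limit theorem $\mathbb{P}(|\Vcn|=\lfloor tn/B_n\rfloor)\sim \tfrac{B_n}{n}2te^{-t^2}$, which indeed drops out of the $F\equiv 1$ case of your identity. So the architecture is sound and matches the paper.

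There is, however, one step whose justification is wrong as written: the claim that the bound $\Psi_n(T)\leq C\Lambda(T)/(nB_n)$ gives \emph{$L^2$-boundedness} of the ratio. Writing $\Lambda(\Trunk^{\ast}_h)=W^{\ast}_h-h+1$ with $W^{\ast}_h$ a sum of $h$ i.i.d.\ copies of $X^{\ast}\sim\mu^{\ast}$, one has $\mathbb{E}[(X^{\ast})^2]=\sum_k k^3\mu(k)$, so $\mathbb{E}[\Lambda(\Trunk^{\ast}_h)^2]=\infty$ as soon as $\mu$ has no third moment — which is permitted even in the finite-variance case, and in the infinite-variance case the dominating variable $\Lambda(\Trunk^{\ast}_h)/B_n$ is not even integrable, since $\mathbb{E}[X^{\ast}]=\infty$. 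So the uniform integrability needed to pass from $n\Psi_n(\Trunk^{\ast}_h)\to 2t\,g(2t)$ in probability to convergence in $L^1$ (equivalently, to $d_{\mathrm{TV}}\to 0$) does not follow from your bound. The step is nevertheless salvageable, and this is implicitly how the paper argues: the local limit theorem gives $n\Psi_n(T)=\tfrac{n\Lambda(T)}{(n-h)B_{n-h}}\bigl(g(\Lambda(T)/B_{n-h})+\epsilon_{n-h}\bigr)$ with $\sup_k|\epsilon_m(k)|\to 0$; the main term is uniformly bounded because $x\mapsto xg(x)$ is bounded on $\R_+$ (this is where the Gaussian decay of $g$, not just the bound $g\le C$, is essential), so bounded convergence applies to it, while the error term contributes $o(1)\cdot\mathbb{E}[\Lambda(\Trunk^{\ast}_h)\mathbbm{1}_{\Lambda(\Trunk^{\ast}_h)\le n}]/B_n$, which is $o(1)$ because $\Psi_n(T)$ vanishes when $\Lambda(T)>n-h$ and the truncated mean $h\,\mathbb{E}[X^{\ast}\wedge n]/B_n\asymp tn\ell(B_n)/B_n^2$ stays bounded by \eqref{eq:Bn}. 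With this replacement (or, in the finite-variance case only, a Scheff\'e argument from $\mathbb{E}[\Lambda(\Trunk^{\ast}_h)]/B_n\to 2t$), your proof goes through.
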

 
This is consistent with the well-known fact that when renormalized by $B_{n}/n$, the height $|\Vc_{n}|$ of $\Vc_n$ converges toward $\mathcal{R}$ in distribution. We will use \cref{thm:trunk2} to deduce asymptotic properties of $ \Trunk( \mathcal{T}_{n}, \Vc_{n} )$  from those of $\Trunk^{\ast}$, which is much simpler to study.

\subsection{Applications to random planar maps.} The main motivation of this work is the study of large faces in Boltzmann planar maps.

The Boltzmann measures are defined out of a weight sequence $\q=(q_1,q_2,\ldots)$ of nonnegative real numbers assigned to the faces of the maps. Precisely, the Boltzmann weight of a bipartite planar map $\m$ (that is, with faces of even degree) is given by
\[w_\q(\m)\coloneqq\prod_{f \in \textup{Faces}(\m)}q_{\deg(f)/2}.\] The sequence $\q$ is called \textit{admissible} when these weights form a finite measure on the set of rooted bipartite maps (i.e.\ with a distinguished oriented edge called the \textit{root edge}). The resulting probability measure $\Pq$ is the Boltzmann measure with weight sequence $\q$. We say that $\q$ is critical if the expected squared number of vertices of a map is infinite under $\Pq$, and subcritical otherwise (see Section~\ref{sec:ssmaps} for precise definitions).

\smallskip

The scaling limits of Boltzmann bipartite maps conditioned to have a large number of faces have been actively studied. In 2013, Le Gall \cite{LG11} and Miermont \cite{Mie11} proved the convergence of uniform quadrangulations towards the so-called \textit{Brownian map}. This result has been extended to critical sequences $\q$ such that the degree of a typical face has finite variance in \cite{Mar16} (we then say that $\q$ is \textit{generic critical}) building on the earlier works \cite{MM07,LG11}.
A different scaling limit appears when we assume that the critical sequence $\q$ is such that the degree of a typical face is in the domain of attraction of a stable law with parameter $\alpha\in(1,2)$ (we then say that $\q$ is \textit{non-generic critical} with parameter $\alpha$).
Under slightly stronger assumptions, Le Gall and Miermont \cite{LGM09} proved the subsequential convergence of such Boltzmann maps. There is a natural candidate for the limiting compact metric space, called the \textit{stable map} with parameter $\alpha$.
The geometry of the stable maps is dictated by large faces that remain present in the scaling limit. The behaviour of these faces is believed to differ in the dense phase $\alpha\in(1,3/2)$, where they are supposed to be self-intersecting, and in the dilute phase $\alpha\in(3/2,2)$, where it is conjectured that they are self-avoiding. Our work is a first step towards this dichotomy.

\medskip

The strategy initiated in \cite{Ric17} consists in studying Boltzmann maps \textit{with a boundary}. This means that we view the face on the right of the root edge as the boundary $\partial\m$ of the map $\m$. Consequently, this face receives no weight, and its degree is called the \textit{perimeter} of the map. Then, for every $k\geq 0$, we let $\Mc_k$ be a Boltzmann map with weight $\q$ conditioned to have perimeter $2k$. The boundary $\partial\Mc_k$ of this map can be thought of as a typical face of degree $2k$ of a Boltzmann map with weight $\q$. The main result of \cite{Ric17} deals with the dense regime. It shows that if $\q$ is a non-generic critical weight sequence with parameter $\alpha\in(1,3/2)$, there exists a slowly varying function $L$ such that in distribution for the Gromov--Hausdorff topology,
	\[\frac{L(k)}{ (2k)^{\alpha-1/2}}\cdot \partial \Mc_k \quad \xrightarrow[k\to\infty]{} \quad \mathscr{L}_{\beta}, \]  where $\mathscr{L}_{\beta}$ is the random stable looptree with parameter $\beta\coloneqq(\alpha-1/2)^{-1}\in(1,2)$.
	
\smallskip

The purpose of this work is to investigate the subcritical, dilute and generic critical regimes that were left untouched in \cite{Ric17}. Thanks to the results of \cite{Ric17}, this problem boils down to the study of scaling limits of \textit{discrete looptrees}, in the specific regimes that we dealt with in Theorems \ref{thm:circle} and \ref{thm:CRT}. Let us now state the applications of these results to the scaling limits of the boundary of Boltzmann planar maps. We start with the dilute and generic critical regimes. 

\begin{corollary}
\label{cor:ScalingDilute}
Let $\q$ be a critical weight sequence which is either generic, or non-generic with parameter $\alpha\in(3/2,2)$ (dilute phase). For every $k\geq 0$, let $\Mc_{\geq k}$ be a Boltzmann map with weight sequence $\q$ conditioned to have perimeter at least $2k$. Then, there exists a non-degenerate random variable $J_\q$ such that the convergence
	\[\frac{1}{2k}\cdot \partial \Mc_{\geq k} \quad \xrightarrow[k\to\infty]{} \quad J_\q \cdot \mathbb{S}_1\] holds in distribution for the Gromov--Hausdorff topology.
\end{corollary}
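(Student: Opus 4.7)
The plan is to reduce the corollary to \cref{thm:circle} via the encoding of the boundary of Boltzmann maps by looptrees of BGW trees developed in \cite{Ric17}. The first step is to identify, by the Bouttier--Di Francesco--Guitter bijection and its looptree reformulation in \cite{Ric17}, the boundary $\partial \Mc_k$ of a Boltzmann map with weight $\q$ conditioned to have perimeter $2k$ in distribution (as a metric graph for the graph distance) with $\Loop(\Tc_k')$, where $\Tc_k'$ is a $\BGW_{\nu_\q}$ tree conditioned to have $k$ vertices and $\nu_\q$ is an offspring distribution that can be extracted from $\q$ via an explicit generating function identity. Since conditioning $\Mc$ on having perimeter at least $2k$ translates into conditioning the tree on having at least $k$ vertices, it suffices to analyse $\Loop(\Tc_{\geq k}')$ in both regimes.

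The second step is to check that $\nu_\q$ satisfies the hypotheses of \cref{thm:circle}, namely that $\nu_\q$ is subcritical ($m_{\nu_\q}<1$) and that its tail is regularly varying as in \eqref{eqn:HypTail} for some $\beta>1$. In the dilute non-generic critical regime with parameter $\alpha\in(3/2,2)$, a Tauberian argument applied to the generating function identity linking $\nu_\q$ to $\q$ is expected to yield
\[\nu_\q([i,\infty)) = \frac{L(i)}{i^{\alpha-1/2}},\qquad i\geq 1,\]
for some slowly varying $L$, with $\alpha-1/2\in(1,3/2)$ and $m_{\nu_\q}<1$. In the generic critical regime, $\nu_\q$ has finite variance, but the critical condition on $\q$ (an infinite expected squared map size) still forces a power-law tail of exponent $\beta>1$ for $\nu_\q$, and subcriticality is checked directly.

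The third step is to invoke \cref{thm:circle} on $\Tc_{\geq k}'$, yielding
\[\frac{1}{k}\cdot\Loop(\Tc_{\geq k}')\xrightarrow[k\to\infty]{(d)} J\cdot\S_1\]
in the Gromov--Hausdorff topology, with $\mathbb{P}(J\geq x)=((1-m_{\nu_\q})/x)^\beta$ for $x\geq 1-m_{\nu_\q}$. Transferring back to the boundary $\partial\Mc_{\geq k}$ and absorbing the constant relating the tree size $k$ to the perimeter normalisation $2k$ gives the desired convergence with a non-degenerate random variable $J_\q$ (essentially a rescaling of $J$).

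The main obstacle lies in the second step, especially in the generic critical regime. In the dilute non-generic case, the tail of $\nu_\q$ is accessible via singularity analysis already developed in \cite{LGM09, Ric17}; in the generic critical case, the finite variance of the face degree distribution of $\q$ does not immediately yield a power-law tail for $\nu_\q$, and one must carefully exploit the criticality condition together with a delicate Tauberian argument to derive the heavy-tailed behaviour required by \eqref{eqn:HypTail}.
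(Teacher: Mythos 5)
Your overall strategy is exactly the paper's: identify $\partial\Mc_{\geq k}$ with the looptree of a $\BGW_\nu$ tree conditioned to have at least a prescribed number of vertices (via the results of \cite{Ric17}), check that $\nu$ satisfies the hypotheses of Theorem \ref{thm:circle}, and conclude. Two corrections, one cosmetic and one substantive. Cosmetic: the identification from \cite{Ric17} is $\Scoop(\Mc_{\geq k})=\Loopb(\Tc_{\geq 2k+1})$, so the tree has (at least) $2k+1$ vertices, not $k$, and the relevant object is the modified looptree $\Loopb$ rather than $\Loop$; neither changes the circle limit, only bookkeeping of constants. Substantive: your description of the generic critical regime is wrong. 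There, the face-degree distribution of the map has finite variance, but the offspring distribution $\nu$ of the boundary tree is subcritical and lies in the domain of attraction of a stable law of parameter $3/2$, i.e.\ $\nu([i,\infty))\sim L(i)/i^{3/2}$ — it has \emph{infinite} variance, and the tail exponent is $\beta=3/2$, exactly parallel to the dilute case where $\beta=\alpha-1/2\in(1,3/2)$. Your claim that ``$\nu_\q$ has finite variance'' and that criticality of $\q$ must be exploited via a ``delicate Tauberian argument'' to extract a power-law tail conflates the face-degree law with $\nu$ and manufactures an obstacle that does not exist: both the subcriticality $m_\nu<1$ and the regularly varying tail of $\nu$ (in both the generic and dilute regimes) are already established in \cite{Ric17} and are simply quoted in the paper. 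Once these facts are in hand, Theorem \ref{thm:circle} (in its $\Loopb$ form) applies directly and yields $J_\q$ with $\Pr{J_\q\geq x}=((1-m_\nu)/x)^{3/2}$ in the generic case and exponent $\alpha-1/2$ in the dilute case.
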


This result is consistent with the conjecture that large faces are self-avoiding in the dilute phase. It is also related to \cite[Theorem 8]{BM17}, which states that some generic critical Boltzmann maps (called \textit{regular}) conditioned to have large perimeter converge towards the so-called \textit{free Brownian disk}, that has the topology of the unit disk. Of course, the reason why we need to consider maps having perimeter \emph{at least} $2k$ is the same as in Theorem \ref{thm:circle} (see Remark \ref{rem} in Section \ref{sec:Maps} for details).

\smallskip

We now deal with the subcritical regime. Intuitively, when conditioning a subcritical Boltzmann map to have a large face, this face ``folds" onto itself, forcing it to become tree-like.

\begin{corollary}
\label{cor:ScalingSubcritical}Let $\q$ be a subcritical weight sequence. For every $k\geq 0$, let $\Mc_{k}$ be a Boltzmann map with weight sequence $\q$ conditioned to have perimeter $2k$. Then, there exists a constant $K_\q>0$ such that the convergence	\[\frac{1}{\sqrt{2k}}\cdot \partial \Mc_k \quad \xrightarrow[k\to\infty]{} \quad K_\q \cdot \mathcal{T}_\mathbbm{e} \]
holds in distribution for the  Gromov--Hausdorff topology. 	
\end{corollary}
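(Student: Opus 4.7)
The plan is to reduce the statement to the looptree convergence of Theorem~\ref{thm:CRT} via the combinatorial framework of~\cite{Ric17}. The starting point is the identity in distribution, obtained there through a variant of the Bouttier--Di Francesco--Guitter bijection applied to the boundary cycle (combined with the Janson--Stefánsson reshuffling), that identifies $\partial\Mc_k$ as a graph metric space with $\Loop(\tau_k)$, where $\tau_k$ is a $\BGW_{\mu_\q}$ tree, $\mu_\q$ is an explicit offspring distribution attached to $\q$, and $\tau_k$ is conditioned on a size event of order $k$ (typically having $k$ leaves or $k$ vertices, which are interchangeable after a classical bijective manipulation). First I would recall this identity and adopt the looptree representation of $\partial\Mc_k$ as my working object.

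The second step is to analyse $\mu_\q$ in the subcritical regime. From the standard Boltzmann dictionary, $\mu_\q$ is a critical probability measure (mean $1$), and the subcriticality of $\q$ -- which requires that the expected squared number of vertices under $\Pq$ be finite -- translates into the fact that the generating function of $\mu_\q$ has radius of convergence strictly larger than $1$. In particular $\mu_\q$ has finite variance $\sigma_{\mu_\q}^2<\infty$, and \emph{a fortiori} lies in the domain of attraction of a Gaussian distribution.

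The third and last step is then a direct application of the finite-variance case of Theorem~\ref{thm:CRT}. With $B_k=\sigma_{\mu_\q}\sqrt{k/2}$ and $c_{\mu_\q}$ as in~\eqref{eqn:CstSigma}, one gets
\[\frac{1}{B_k}\cdot \Loop(\tau_k)\ \xrightarrow[k\to\infty]{}\ c_{\mu_\q}\sqrt{2}\cdot \mathcal{T}_{\mathbbm{e}}\]
in distribution for the Gromov--Hausdorff topology. Using $B_k/\sqrt{2k}=\sigma_{\mu_\q}/2$, a simple rescaling yields the announced convergence with explicit positive constant
\[K_\q\ =\ \frac{\sigma_{\mu_\q}\,c_{\mu_\q}}{\sqrt{2}}.\]

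I expect the main obstacle to lie in the second step: one needs to verify carefully that the $\mu_\q$ produced by the \cite{Ric17} reduction is indeed critical with radius of convergence strictly bigger than $1$ under the mere subcriticality of $\q$, and to track that the precise size event on $\tau_k$ arising from conditioning $\Mc$ on having perimeter exactly $2k$ matches the ``exactly $n$ vertices'' conditioning demanded by Theorem~\ref{thm:CRT} (possibly after absorbing a leaf-versus-vertex discrepancy into a change of measure). Once this dictionary is fixed, the convergence is handed over verbatim to Theorem~\ref{thm:CRT}.
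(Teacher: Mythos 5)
Your overall strategy coincides with the paper's: transfer the problem to a looptree attached to a $\BGW_\nu$ tree via the combinatorial identities of \cite{Ric17}, check that subcriticality of $\q$ makes $\nu$ critical with finite variance, and conclude by the CRT limit theorem. However, two points in your reduction are off. First, the identity supplied by \cite[Corollaries 3.4 and 3.7]{Ric17} together with \cite[Lemma 4.3]{CK15} is $\Scoop(\Mc_k)=\Loopb(\Tc_{2k+1})$ in distribution, where $\Tc_{2k+1}$ is a $\BGW_\nu$ tree conditioned to have exactly $2k+1$ vertices and $\Loopb$ is the \emph{modified} looptree obtained from $\Loop$ by contracting, for each vertex, the edge to its last child. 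The spaces $\Loop(\tau)$ and $\Loopb(\tau)$ are not isometric --- distances differ by a nontrivial constant factor --- which is precisely why the paper states a separate Theorem~\ref{thm:CRT2} for $\Loopb$, with constant $\frac{2}{\sigma_\mu}\cdot\frac14(\sigma_\mu^2+\mu(2\Z+))$ in place of the $c_\mu$ of \eqref{eqn:CstSigma}. Applying Theorem~\ref{thm:CRT} verbatim to $\Loop$ therefore does not give the metric space $\partial\Mc_k$; you need the $\Loopb$ analogue (a routine but non-vacuous modification of the proof). Since the corollary only asserts the existence of some $K_\q>0$ this does not derail the argument, but your explicit value $K_\q=\sigma_{\mu_\q}c_{\mu_\q}/\sqrt2$ is not the right one: the paper gets $K_\q=\frac{2}{\sigma_\nu}\cdot\frac14(\sigma_\nu^2+1)$, using also that $\nu$ is supported on even integers so $\nu(2\Z+)=1$.

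Second, your justification of the finite-variance hypothesis --- that subcriticality of $\q$ forces the generating function of $\nu$ to have radius of convergence strictly larger than $1$ --- is an overclaim that the paper explicitly disavows: the introduction stresses that small exponential moments are \emph{not} known to hold in this application, which is exactly why Theorem~\ref{thm:CRT} had to be proved under a mere finite-variance assumption rather than quoting \cite[Theorem 13]{CHK15}. The offspring distribution $\nu$ is expressed through partition functions of maps with a simple boundary whose precise asymptotics are not available (see Remark~\ref{rem}), so no control beyond the second moment is claimed. What is actually used, from \cite[Lemma 3.5 and (41)]{Ric17}, is that $m_\nu=1$ and $\sigma_\nu^2=\bigl(F(r_\q)/(1-Z_\q^2f_\q'(Z_\q))\bigr)^2<\infty$; that is all the limit theorem requires, and your proof should rest on this rather than on an exponential-moment bound.
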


This result should be compared with \cite[Theorem 1.1]{JS15} (see also \cite[Theorem 5]{Bet15}), where the convergence of subcritical Boltzmann maps conditioned to have large volume towards the Brownian CRT is proved. On the one hand, \cite{JS15} deals with the whole map (not only its boundary), but on the other hand, our assumptions are more general (in \cite{JS15}, it is assumed beyond subcriticality that typical faces have a heavy-tailed distribution in a quite strong sense).

Together with the results of \cite{Ric17}, \cref{cor:ScalingDilute,cor:ScalingSubcritical} give a global picture of the scaling limits of the boundary of Boltzmann maps (see Figure \ref{fig:Array} for an illustration). In particular, Corollary \ref{cor:ScalingDilute} together with \cite[Theorem 1.1]{Ric17} establish the phase transition in the parameter $\alpha$ for the topology of large faces in Boltzmann maps, that was only overviewed through local limits in \cite[Theorem 1.2]{Ric17}, and \textit{via} volume growth exponents in \cite{BC17}. Note however that establishing a result similar to Corollary \ref{cor:ScalingDilute} for the boundary $\partial\Mc_k$ of a map conditioned to have perimeter \textit{exactly} $2k$ is still an open problem. Finally, our results also have an interpretation in terms of large loops in the rigid $\mathcal{O}(n)$ loop model on quadrangulations, by applying the results of \cite{BBG12} (see \cite{Ric17} for more on this). 

 \begin{figure}[!h]
 \begin{center}
 \includegraphics[width=1   \linewidth]{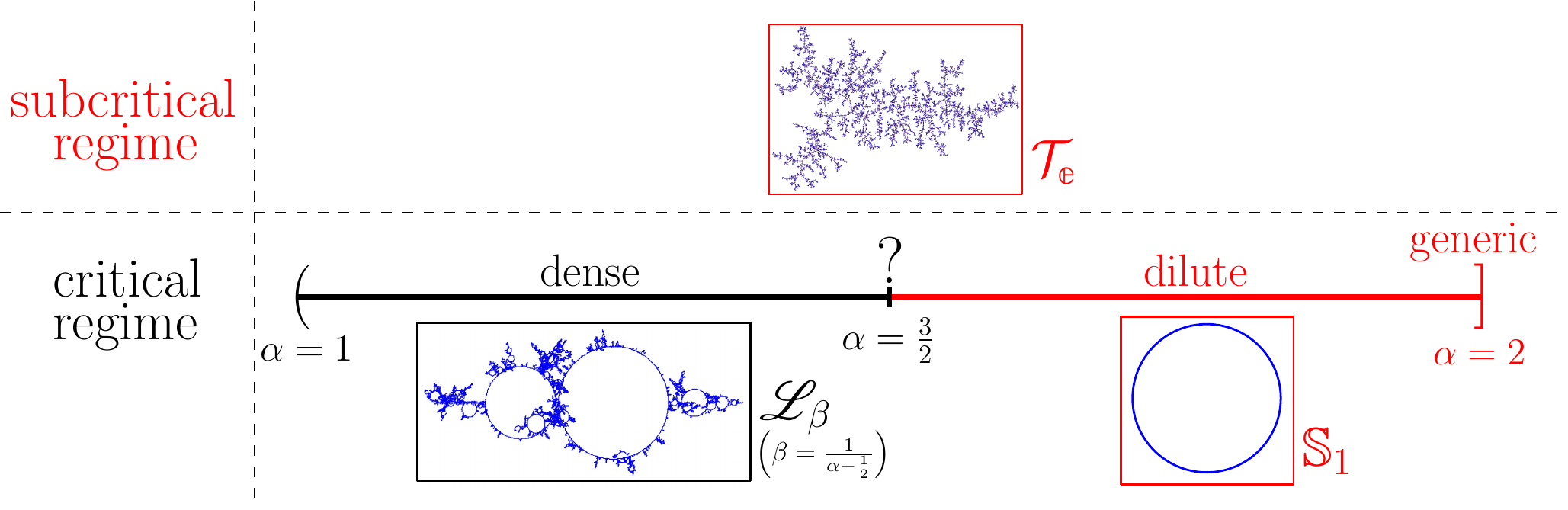}
 \caption{ \label{fig:Array} A summary of the scaling limits of the boundary of Boltzmann planar maps. The contributions of this paper are indicated in red. The generic critical weight sequences are usually identified with the parameter $\alpha=2$, because the distribution of a typical face has finite variance and thus belongs to the domain of attraction of a Gaussian distribution.}
 \end{center}
 \end{figure}

\begin{remark}[Special case $\alpha=3/2$]   As proved in \cite{Ric17}, the $\BGW$ tree structure describing the boundary of non-generic critical Boltzmann maps with parameter $3/2$ can be either subcritical, or critical. As suggested in \cite[Remark 6.3]{Ric17}, we expect the scaling limit to be a multiple of a loop in both cases, although the condensation phenomenon could occur at a scale smaller than the total number of vertices in the critical tree setting. This has been investigated in \cite{KR18}.
\end{remark}

\medskip

The paper is organized as follows. In Section \ref{sec:Trees}, we recall definitions and fundamental results about (random) plane trees. Then, Sections \ref{sec:Th1} and \ref{sec:Th2} are devoted to the proofs of Theorem \ref{thm:circle} and Theorem \ref{thm:CRT}. The first is based on a limit theorem for random walks with negative drift, while the others use a spinal decomposition and a tightness argument. Finally, we discuss the applications to random planar maps in Section \ref{sec:Maps}.

\subsection*{Acknowledgements.} Many thanks to Nicolas Curien, Sébastien Martineau, Cyril Marzouk and Grégory Miermont for comments on a preliminary version of this work. We are also grateful to Cyril Marzouk for stimulating discussions concerning Lemma \ref{lem:tightfd} and to a referee for suggestions that improved the paper.  We acknowledge  partial support from  Agence Nationale de la Recherche, grant number ANR-14-CE25-0014 (ANR GRAAL). I.K. acknowledges  partial support from the City of Paris,  grant “Emergences Paris 2013, Combinatoire à Paris”.

\tableofcontents

\section{Trees}\label{sec:Trees}

\subsection{Plane trees.}
\label{sssec:trees}
We use Neveu's formalism \cite{Nev86} to define plane trees: let $\N = \{1, 2, \dots\}$ be the set of all positive integers, set $\N^0 = \{\varnothing\}$ and consider the set of labels $\U = \bigcup_{n \ge 0} \N^n$. For $v = (v_1, \dots, v_n) \in \U$, we denote by $|v| = n$ the length of $v$. If $n \ge 1$, we define $\textup{pr}(v) = (v_1, \dots, v_{n-1})$ and for $i \ge 1$, we let $vi = (v_1, \dots, v_n, i)$. More generally, for $w = (w_1, \dots, w_m) \in \U$, we let $vw = (v_1, \dots, v_n, w_1, \dots, w_m) \in \U$ be the concatenation of $v$ and $w$. We endow $\U$ with the lexicographical order (denoted by $\prec$): given $v,w \in \U$, if $z \in \U$ is their longest common prefix (so that $v = z(v_1, \dots, v_n)$ and $w = z(w_1, \dots, w_m)$ with $v_1 \ne w_1$), then we have $v \prec w$ if $v_1 < w_1$.

A (locally finite) \emph{plane tree} is a nonempty  subset $\tau \subset \U$ such that (i) $\varnothing \in \tau$; (ii) if $v \in \tau$ with $|v| \ge 1$, then $\textup{pr}(v) \in \tau$; (iii)  if $v \in \tau$, then there exists an integer $k_v(\tau) \ge 0$ such that $vi \in \tau$ if and only if $1 \le i \le k_v(\tau)$.

We may view each vertex $v$ of a tree $\tau$ as an individual of a population for which $\tau$ is the genealogical tree. For $v,w \in \tau$, we let  $\llbracket v, w \rrbracket$ be the vertices belonging to the shortest path from $v$ to $w$ in $\tau$. Accordingly, we use $\llbracket v, w \llbracket$ for the same set, excluding $w$. The vertex $\varnothing$ is called the \emph{root} of the tree and for every $v \in \tau$, $k_v(\tau)$ is the number of children of $v$ (if $k_v(\tau) = 0$, then $v$ is called a \emph{leaf}), $|v|$ is its \emph{generation}, $\textup{pr}(v)$ is its \emph{parent} and more generally, the vertices $v, \textup{pr}(v), \textup{pr} \circ \textup{pr} (v), \dots, \textup{pr}^{|v|}(v) = \varnothing$ belonging to $ \llbracket \varnothing, v \rrbracket$ are its \emph{ancestors}. If $\tau$ is a tree and $v$ a vertex of $\tau$, $\theta_{v}(\tau)=  \{vw \in\tau : w\in\mathbb{U}\}$ denotes the tree made of $v$ together with its descendants in $\tau$. We also let $\cut_{v}(\tau)= \{v\} \cup \tau \backslash \theta_{v}(\tau)$ be tree obtained from $\tau$ by removing all the (strict) descendants of $v$ in $\tau$. Finally, we let $|\tau|$ be the total number of vertices (or size) of the plane tree $\tau$. For every  $n \geq 1$, we let $\mathbb{A}_n$ be the set of plane trees with $n$ vertices.

\subsection{Bienaymé--Galton--Watson trees and their codings.}
\label{sssec:coding}
Let $\mu$ be a  probability measure on $\Z_{\geq 0}$ (called the \emph{offspring distribution}) such that $\mu(0) > 0$ and $\mu(0)+\mu(1)<1$ (to avoid trivial cases). We also assume that $\mu$ has mean $m_{\mu}:=\sum_{i \geq 0} i \mu (i) \leq 1$. The Bienaymé--Galton--Watson ($\BGW$) measure with offspring distribution $\mu$ is the probability measure $\BGW_\mu$ characterized by
\begin{equation}\label{eq:def_GW}
\BGW_\mu(\tau) = \prod_{u \in \tau} \mu(k_u(\tau)),
\end{equation}
for every plane tree $\tau$, see e.g.~\cite[Prop.~1.4]{LG05}. We say that the offspring distribution $\mu$ (or a tree with law $\BGW_\mu$) is \textit{critical} (resp.\ \textit{subcritical}) if $m_\mu=1$ (resp.\ $m_\mu<1$). For the sake of simplicity, we always assume that $\mu$ is \textit{aperiodic}, meaning that $\textup{gcd}(\{k\geq 1 : \mu(k)>0\})=1$. However, all the results can be adapted to the periodic setting without effort. Also, when dealing with $\BGW_\mu$ tree conditioned to have $n$ vertices, we always implicitly assume that we work along a subsequence on which $\BGW_\mu(\mathbb{A}_n)>0$.

Consider a tree $\tau$ with its vertices listed in lexicographical order:
$\varnothing=u_{0}\prec u_{1}\prec \cdots \prec u_{|\tau|-1}$. The \textit{height function} 
$\H(\tau)=(\H_n(\tau) : 0 \leq n < |\tau|)$ is defined, for $0
\leq n < |\tau|$, by $\H_n(\tau)=|u_n|$. The \textit{{\L}ukasiewicz path} $ \W(\tau)=( \W_n(\tau) : 0 \leq n < |\tau|)$ of a tree $\tau$ is defined by $ \W_0(\tau)=0$, and $ \W_{n+1}(\tau)= \W_{n}(\tau)+k_{u_n}(\tau)-1$
for $0 \leq n < |\tau|$.

Finally, the \textit{contour function} $\C(\tau)=( \C_n(\tau) : 0
\leq n \leq 2(|\tau|-1) )$ of a tree $\tau$ is defined by considering a particle that starts from the root and visits continuously all edges at unit speed (assuming
that every edge has unit length), going backwards as little as possible and respecting the lexicographical order of vertices. If we let $\varnothing= x_0, \ldots, x_{2(|\tau |-1)}$ be the ordered list of vertices of $\tau$ visited by the particle (with repetition), then we have $\C_n(\tau)=|x_n|$ for $0
\leq n \leq 2(|\tau|-1)$ (so that $\C_t(\tau)$ is the distance to
the root of the position of the particle at time $t$, see \cite[Section 2]{Du03} for more on this).

For technical reasons, we let $\H_{n}(\tau)=\W_n(\tau)=0$ for $ n> |\tau|$ and $\C_n(\tau)=0$ for $n > 2(|\tau|-1)$. We also extend $\H(\tau)$, $\W(\tau)$ and $\C(\tau)$ to $\R_+$ by linear interpolation.

\section{Looptrees: the non-generic subcritical case}\label{sec:Th1}

\subsection{Invariance principle for random walks with negative drift}

The roadmap to \cref{thm:circle} is based on a limit theorem for random walks with negative drift that we now state.  Let $(X_{i} : i \geq 1)$ be an i.i.d.\ sequence of real-valued random variables such that:
\begin{enumerate}
\item[--] $\Es{X_{1}}=-\gamma<0$ 
\item[--] $\Pr{X_{1} \geq x}=L(x)x^{-\beta}$ with $\beta > 1$ and $L$ a slowly varying function at infinity.
\end{enumerate}

We set $W_{0}=0$, $W_{n}=X_{1}+\cdots+X_{n}$ for every $n \geq 1$ and let \[\zeta= \inf \{i \geq 1 : W_{i}<0\}.\] We also set $W_{i}=0$ for $i<0$ by convention. In this section, our goal is to study the behaviour of the random walk $(W^{(n)}_{i} : i \geq 0)$ under the conditional probability   $ \P( \, \cdot \, | \zeta \geq n)$, as $n \rightarrow \infty$. More precisely, we shall couple with high probability the trajectory $(W^{(n)}_{i} : i \geq 0)$ with that of a random walk conditioned to be nonnegative for a random number of steps, followed by an independent ``big jump'', and then followed by an independent unconditioned random walk.

\paragraph*{Statement of the main result}

For $n \geq 1$, we consider the process $(\Zn_{i} : i \geq 0)$ whose distribution is specified as follows. First, let $I$ be a random variable with law given by
\[\Pr{I=j}= \frac{\Pr{\zeta\geq j}}{\Es{\zeta}}, \qquad j \geq 1.\] Note that $\Es{\zeta}<\infty$ since $W_{n}$ has negative drift (see e.g.~\cite[Theorem 1, Sec.~XII.2]{Fel71}).
Then, for every $j \geq 1$, conditionally given $\{I=j\}$, the three random variables $(\Zn_{i} : 0\leq i <j)$, $\Yn_{j} \coloneqq \Zn_j-\Zn_{j-1}$ and $(\Zn_{i+j}-\Zn_j : i \geq 0)$ are independent and distributed as follows:
\begin{itemize}
	\item $\displaystyle (\Zn_{i} : 0\leq i <j) \, \mathop{=}^{(d)}  \, (W_{i} : 0\leq i <j) \textrm{ under } \P(\, \cdot \, | \zeta \geq j)$
	\item $\displaystyle \Yn_{j}  \,  \mathop{=}^{(d)} \, X_{1} \textrm{ under } \P(\, \cdot \, | X_{1} \geq \gamma n)$
	\item $\displaystyle (\Zn_{i+j}-\Zn_j : i \geq 0) \, \mathop{=}^{(d)}   \, (W_{i} : i \geq 0)$.
\end{itemize}

\begin{theorem}\label{thm:dTV}
Let $(\Wn_{i} : i \geq 0)$ be distributed as the random walk $(W_{i} : i \geq 0)$ under the conditional probability $ \P( \, \cdot \, | \zeta \geq n)$. Then, we have
\[d_{\mathrm{TV}}\left( \left(\Wn_{i} : i \geq 0 \right),  \left(\Zn_{i} : i \geq 0 \right) \right)  \quad \xrightarrow[n\to\infty]{}  \quad 0,\]
where $d_{\mathrm{TV}}$ denotes the total variation distance on $\R^{\Z_{+}}$ equipped with the product topology.
\end{theorem}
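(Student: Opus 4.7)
The plan is to exploit the one-big-jump principle: for a random walk with heavy-tailed positive increments and negative drift, the overwhelmingly most likely way to remain nonnegative up to time $n$ is to make a single atypically large jump near the start, providing a buffer large enough that the subsequent unconditioned motion remains nonnegative (with high probability) until time $n$. Accordingly, I introduce the first big-jump time $T_n \coloneqq \inf\{i \geq 1 : X_i \geq \gamma n\}$; under the unconditioned law, $T_n$ is geometric with small parameter $p_n \coloneqq \P(X_1 \geq \gamma n)$. The proof then (i) decomposes $\{\zeta \geq n\}$ according to $T_n = j$, (ii) identifies $T_n$ with the big-jump time $I$ of the target process $Z^{(n)}$, and (iii) shows that, piece by piece, the pre-jump walk, the jump, and the post-jump walk match the three independent pieces of $Z^{(n)}$.

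The first step is the tail asymptotic $\P(\zeta \geq n) \sim \E[\zeta] \, p_n$. By independence of the increments,
\begin{equation*}
\P(\zeta \geq n,\, T_n = j) \;=\; \P(\zeta \geq j,\; X_1,\ldots, X_{j-1} < \gamma n) \cdot \P(X_j \geq \gamma n) \cdot \rho_n^{(j)},
\end{equation*}
where $\rho_n^{(j)}$ is the probability that the post-jump walk stays nonnegative for $n-j$ steps, averaged over the pre-walk terminal value and the jump size. For fixed $j$ the first factor converges to $\P(\zeta \geq j)$ since the truncation $X_i < \gamma n$ is asymptotically trivial; granted $\rho_n^{(j)} \to 1$, this gives $\P(\zeta \geq n, T_n = j) \sim \P(\zeta \geq j) \, p_n$. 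Summing over $j$ and using $\sum_j \P(\zeta \geq j) = \E[\zeta] < \infty$ (finite by \cite[XII.2]{Fel71} thanks to the negative drift) yields the asymptotic; a uniform upper bound of the form $\P(\zeta \geq n, T_n = j) \leq C \, \P(\zeta \geq j) \, p_n$ valid for every $j$ is needed to exchange limit and sum on the tail.

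Given these tail asymptotics, the identification goes as follows. First, $\P(T_n = j \mid \zeta \geq n) \to \P(\zeta \geq j)/\E[\zeta]$, which is exactly the law of $I$. Next, conditional on $\{\zeta \geq n,\, T_n = j\}$, the three blocks (pre-jump walk, big jump, post-jump walk) are independent up to three forms of residual interaction: (a) the truncation $X_i < \gamma n$ on the pre-walk, which is asymptotically vacuous in total variation; (b) the constraint that the post-jump walk stays nonnegative on $[j, n-1]$, which jointly affects the jump size and the post-jump walk; and (c) the continuation of the walk beyond time $n$, which is coupled trivially by using the same increments for $W^{(n)}$ and $Z^{(n)}$ after time $n$ (both processes are unconditioned Markov there). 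Assembling the three blocks and using the uniform bound to control the tail of $j$ then yields the claimed $d_{\mathrm{TV}}$ bound through the coupling characterization of total variation.

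The crux and main obstacle is to show that $\rho_n^{(j)} \to 1$, i.e., to neutralize the residual interaction (b). Conditional on the big jump $X_j = y$ with $y \geq \gamma n$, the relevant probability is $\P(\min_{k \leq n-j} W'_k \geq -y)$ for an independent unconditioned walk $W'$. A Donsker-type estimate shows this minimum concentrates near $-\gamma(n-j) + O(b_n)$, where $b_n$ is the fluctuation scale of the centered walk (of order $\sqrt{n}$ when $X_1$ has finite variance and up to $n^{1/\beta}$ otherwise, in all cases $o(n)$ since $\beta > 1$). Thus if $y$ sits exactly at the threshold $\gamma n$, the staying-positive probability only tends to $1/2$. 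The resolution comes from the law of the jump itself: conditional on $X_j \geq \gamma n$, the ratio $X_j / (\gamma n)$ converges in distribution to a Pareto$(\beta)$ random variable on $[1,\infty)$, so the critical window $\{X_j - \gamma n \leq C b_n\}$ has probability $O(b_n/n) \to 0$. Outside this window the buffer $y - \gamma n$ dominates the fluctuations and the staying-positive probability tends uniformly to $1$; integrating against the jump law therefore gives $\rho_n^{(j)} \to 1$. The delicate point is to carry out this argument uniformly in $j$ so that the tail of the sum in the decomposition is negligible, which calls for a fluctuation bound on $W'$ robust to the variance being finite or infinite.
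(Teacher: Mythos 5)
Your route is genuinely different from the paper's, and it contains a gap at exactly the point where the difficulty of the one-big-jump principle is concentrated. You decompose $\{\zeta\geq n\}$ over the first big-jump time $T_n$ and try to \emph{derive} the tail asymptotic $\P(\zeta\geq n)\sim \E[\zeta]\,\P(X_1\geq\gamma n)$ as your first step. For $j\leq n$ the pieces are fine: indeed $\P(\zeta\geq n,\,T_n=j)\leq \P(\zeta\geq j)\P(X_1\geq\gamma n)$ holds with constant $1$ because $\{\zeta\geq j\}$ is independent of $X_j$, so your dominated-convergence step over $j\leq n$ works, and your Pareto-overshoot argument for $\rho_n^{(j)}\to 1$ is sound in spirit (an $L^1$ maximal inequality, as in the paper's Lemma \ref{lem:PF}, handles the fluctuation bound uniformly in $j$ whether or not the variance is finite). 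What is missing is the term you never isolate: the event $\{\zeta\geq n,\ \max_{1\leq i\leq n}X_i<\gamma n\}$, i.e.\ survival for $n$ steps with \emph{no} big jump. Your claimed uniform bound $\P(\zeta\geq n, T_n=j)\leq C\P(\zeta\geq j)\P(X_1\geq\gamma n)$ "for every $j$" would have to cover $j>n$, and for $j$ just above $n$ it is equivalent to $\P(\zeta\geq n,\ \text{no big jump})=O(\P(X_1\geq\gamma n))$ — which is precisely the hard estimate (it requires Fuk--Nagaev-type or exponential bounds for the truncated walk) and is the actual content of the results of Borovkov--Borovkov and Denisov--Shneer. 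Without it you cannot conclude $\P(T_n\leq n\mid\zeta\geq n)\to 1$, and the identification with $\Zn$ (which always has exactly one big jump) collapses.

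The fix is either to supply that estimate or to do what the paper does: take the asymptotic $\P(\zeta\geq n)\sim\E[\zeta]\P(X_1\geq\gamma n)$ as a black box from \cite[Theorem 8.2.4]{BB08} (or \cite{DSV13}); note that your lower bound plus this asymptotic then sandwiches away the no-big-jump term for free. The paper's subsequent argument is also structurally simpler than yours: instead of matching the three blocks one by one under the conditioning, it shows that the \emph{target} law $\nu_n$ concentrates on the event $G_n$ of paths that are nonnegative on $[0,n]$ with a unique increment $\geq\gamma n$ (Lemma \ref{lem:PF}), and then observes that on $G_n$ the two laws are \emph{exactly} proportional, with ratio $\P(\zeta\geq n)/(\E[\zeta]\P(X_1\geq\gamma n))\to 1$; no analysis of the conditioned walk $\Wn$ or of residual interactions between blocks is needed. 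Your blockwise coupling can be made to work, but it re-proves a cited theorem along the way and multiplies the places where uniformity in $j$ must be checked.
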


Intuitively speaking, this means that under the conditional probability $ \P( \, \cdot \, | \zeta \geq n)$, as $n \rightarrow \infty$, the random walk $(W_{i} : i \geq 0)$ first behaves as conditioned to stay nonnegative for a random number $I$ of steps, then makes a jump distributed as  $ \P( \, \cdot \, | X_1 \geq \gamma n)$, and finally evolves as a non-conditioned walk. 

\subsection{Proof of \cref{thm:dTV}}

In order to establish \cref{thm:dTV}, our main input is a result describing the asymptotic behavior of the tail distribution $\Pr{\zeta \geq n}$ as $n \rightarrow \infty$:
\begin{equation}
\label{eq:BB}\Pr{\zeta \geq n}  \quad \mathop{\sim}_{n \rightarrow \infty} \quad \Es{\zeta} \Pr{X_{1} \geq \gamma n}.
\end{equation}
This follows from \cite[Theorem 8.2.4]{BB08}. Indeed, in the notation of \cite{BB08}, $X_{1}$ belongs to the class $ \mathcal{R}$  of distributions with regularly varying right tails, see \cite[Equation 8.2.3]{BB08}. See also \cite[Theorem 3.2, Remark 3.3]{DSV13}.

Let us first introduce some notation. We denote by $\cA$ be the Borel $\sigma$-algebra on $\R^\N$ associated with the product topology, and we set
\[\mu_n(A) \coloneqq \Pr{\left(\Wn_i : i \geq 0\right)\in A} \quad \text{and} \quad \nu_n(A) \coloneqq \Pr{\left(\Zn_i : i \geq 0\right)\in A}, \quad A\in \cA.\] 
The idea of the proof of \cref{thm:dTV} is to transform the estimate \eqref{eq:BB} into an estimate on probability measures by first finding a ``good" event $G_n$ such that $\nu_{n}(G_{n}) \rightarrow 1$ as $n \rightarrow \infty$ and then by showing that $\sup_{A \in \cA }{\vert \mu_n(A\cap G_{n})-\nu_n(A\cap G_{n})\vert} \rightarrow 0$ as $n \rightarrow\infty$.

\begin{lemma}\label{lem:PF} For every $n\in\N$ and $\gamma>0$, set 
\[
	G_{n}:=\left\lbrace (w_0,\ldots,w_n) \in \R_+^{n+1} : \exists! \ i \in \llbracket 1,n\rrbracket \text{ s.t. } w_i-w_{i-1} \geq \gamma n \right\rbrace.
\] 
Then $\nu_{n}( G_{n} ) \longrightarrow 1$ as $n \rightarrow \infty$.
\end{lemma}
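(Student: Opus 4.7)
The plan is to verify that, under $\nu_n$, the random trajectory $(Z_0^{(n)}, \ldots, Z_n^{(n)})$ lies in $G_n$ with probability tending to $1$. Unpacking the definition of $G_n$, two properties must be established: nonnegativity on $\llbracket 0, n\rrbracket$, and uniqueness of the increment of size $\geq \gamma n$ in $\llbracket 1, n\rrbracket$. Since, by the very construction of $\nu_n$, the jump at the random time $I$ automatically satisfies $Y_I^{(n)} \geq \gamma n$, the second property reduces to showing $I \leq n$ and ruling out any other increment reaching $\gamma n$. Two ingredients drive the argument: first, the tail estimate \eqref{eq:BB}, which implies that $I$ is finite almost surely with $\mathbb{P}(I > K) = O(K^{1-\beta} L(K))$; second, by regular variation of $\mathbb{P}(X_1 \geq \cdot)$, the normalized jump $Y_I^{(n)}/(\gamma n)$ converges in distribution to a Pareto$(\beta)$ random variable $U$ supported on $[1,\infty)$, which is strictly greater than $1$ almost surely.

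I first would show, using \eqref{eq:BB} and the formula $\mathbb{P}(I = j) = \mathbb{P}(\zeta \geq j)/\mathbb{E}[\zeta]$, that $\mathbb{P}(I > K_n) \to 0$ for any $K_n \to \infty$; in particular $I \leq n$ with probability $1 - o(1)$. Then I would rule out additional large increments. On $\llbracket I+1, n \rrbracket$ the increments are i.i.d.\ copies of $X_1$ by the third bullet in the definition of $Z^{(n)}$, so a plain union bound gives
\[ \mathbb{P}\bigl(\exists i \in \llbracket I+1, n \rrbracket :\ X_i \geq \gamma n\bigr) \leq n\, \mathbb{P}(X_1 \geq \gamma n) = O\!\left(L(n)/n^{\beta - 1}\right) = o(1). \]
For the pre-$I$ part I would truncate at $K_n = \log n$ and use the bound $\mathbb{P}(A \mid \zeta \geq j) \leq \mathbb{P}(A)/\mathbb{P}(\zeta \geq j)$: the weight $\mathbb{P}(\zeta \geq j)$ cancels against $\mathbb{P}(I = j)$, leaving
\[ \mathbb{P}\bigl(\exists i \in \llbracket 1, I-1 \rrbracket :\ X_i \geq \gamma n,\ I \leq K_n\bigr) \leq \frac{\mathbb{P}(X_1 \geq \gamma n)}{\mathbb{E}[\zeta]} \sum_{j=2}^{K_n}(j-1) = O\!\left((\log n)^2 L(n)/n^\beta\right) = o(1), \]
and $\mathbb{P}(I > K_n) = O(K_n^{1-\beta}) \to 0$, completing this step.

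It remains to prove nonnegativity. For $0 \leq i \leq I-1$ this is automatic from the conditioning $\zeta \geq I$, and $Z_I^{(n)} \geq \gamma n > 0$ by the big jump. For $I < i \leq n$, writing $Z_i^{(n)} = Z_I^{(n)} + W'_{i-I}$ with $(W'_j)$ an independent copy of the walk $(W_j)$, I need $Z_I^{(n)} + \min_{0 \leq j \leq n-I} W'_j \geq 0$. The SLLN applied to $W'$ (drift $-\gamma$) yields $\min_{0 \leq j \leq k} W'_j / k \to -\gamma$ in probability as $k \to \infty$; since $n - I \to \infty$ in probability, this gives
\[ Z_I^{(n)} + \min_{0 \leq j \leq n-I} W'_j = \gamma n (U_n - 1) + \gamma I + o_{\mathbb{P}}(n), \qquad U_n := Y_I^{(n)}/(\gamma n). \]
Fixing $\eta > 0$, the event $\{U_n > 1+\eta\}$ has probability $(1+\eta)^{-\beta} + o(1)$, and on it the leading term $\gamma n \eta$ dominates the $o_{\mathbb{P}}(n)$ correction with conditional probability $1 - o(1)$. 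Letting $\eta \downarrow 0$ then makes the overall probability of nonnegativity tend to $1$.

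The main obstacle lies in this last step: the deterministic bound $Z_I^{(n)} \geq \gamma n$ is exactly too tight to survive the post-jump descent of $\gamma(n-I) \approx \gamma n$, and the sublinear fluctuations of the walk could in principle push the trajectory below zero. The saving feature is the \emph{distributional} strict inequality $U > 1$ in the Pareto limit---itself a consequence of the heavy-tailed conditioning leaving a nontrivial excess of order $n$ above the threshold---which provides exactly the margin needed to absorb those fluctuations.
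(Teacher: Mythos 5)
Your proof is correct, and its overall skeleton --- bounding separately the probability of each way $G_n$ can fail ($I$ too large, a spurious increment $\geq \gamma n$ before or after time $I$, or the post-jump trajectory going negative) --- matches the paper's. The step where you genuinely diverge is the delicate cancellation between the jump $Y_I^{(n)}\geq\gamma n$ and the subsequent drift $-\gamma(n-I)$. The paper resolves it with a single deterministic intermediate scale $\eta_n=\sqrt{n\,\E{|W_n+\gamma n|}}=o(n)$: regular variation shows the jump exceeds $\gamma n+\eta_n$ with probability $1-o(1)$, and Doob's $L^1$ maximal inequality applied to the martingale $W_i+\gamma i$ bounds the probability that the walk ever drops below $-\gamma n-\eta_n$ by $\sqrt{\theta_n}\to 0$. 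You instead take a macroscopic margin $\eta n$, obtain it from the Pareto limit of the overshoot $Y_I^{(n)}/(\gamma n)$ with probability $(1+\eta)^{-\beta}+o(1)$, control the running minimum by the law of large numbers, and send $\eta\downarrow 0$ only after $n\to\infty$. Both arguments are valid (your appeal to the SLLN does suffice for $\min_{j\le k}W'_j/k\to-\gamma$, with no need for a maximal inequality, which matters since the variance may be infinite); the paper's choice of $\eta_n$ avoids the double limit and treats all error terms uniformly, while your version is softer and makes explicit that the saving grace is the strict distributional excess $U>1$ of the conditioned jump. Your treatment of the pre-$I$ jumps, cancelling $\Pr{\zeta\geq j}$ against $\Pr{I=j}$ and truncating at $K_n=\log n$, is a mild (and slightly slicker) variant of the paper's fixed-$K$ truncation.
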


Let us first explain how one establishes  \cref{thm:dTV} using \cref{lem:PF}.

\begin{proof}[Proof of \cref{thm:dTV}]By Lemma \ref{lem:PF}, it suffices to see that $\sup_{A \in \cA }{\vert \mu_n(A\cap G_n)-\nu_n(A\cap G_n)\vert} \rightarrow 0$ holds as $n \rightarrow\infty$. Without loss of generality, we  focus on events of the form $\mathbf{w}\times A$, where $\mathbf{w}=(0,w_1,\ldots,w_n)\in G_n$ and $A\in \cA$. On the one hand, since $\mathbf{w} \in G_{n}$ we have
\[\mu_n\left(\mathbf{w}\times A\right)=\frac{\Pr{\left(W_0,\ldots,W_n \right)=\mathbf{w}} \Pr{\left(W_{n+i} : i \geq 1\right) \in A }}{\Pr{\zeta \geq n}}.\] On the other hand,  write
\[\nu_n\left(\mathbf{w}\times A\right)=\sum_{j=1}^\infty \Pr{I=j, \ \left(\Zn_i : i \geq 0\right) \in \mathbf{w}\times A}.\] Since $\mathbf{w}\in G_n$, we get
\begin{align*}
& \Pr{I\leq n, \ \left(\Zn_i : i \geq 0\right) \in \mathbf{w}\times A}\\
	&=\sum_{j=1}^n \Pr{I=j}\cdot  \frac{\Pr{\left(W_0,\ldots,W_{j-1} \right)=(w_0,\ldots,w_{j-1})}}{\Pr{\zeta \geq j}} \cdot \frac{\Pr{X_1=w_j-w_{j-1}, \ X_1\geq \gamma n }}{\Pr{X_1 \geq \gamma n }}  \\
&	\hspace{2cm}\qquad\qquad\qquad\qquad\qquad\qquad\qquad\cdot \Pr{\left(W_{i+j} : i \geq 1\right) \in (w_{j+1},\ldots,w_{n}) \times A }\\
 &= \frac{\Pr{\left(W_0,\ldots,W_n \right)=\mathbf{w}} \Pr{\left(W_{n+i} : i \geq 1\right) \in A }}{\Pr{X_1 \geq \gamma n} \Es{\zeta}},
\end{align*} where we use the fact that for every $\mathbf{w}\in G_n$, the term in the above sum is non-zero for the unique value $j=j(\mathbf{w}) \in \llbracket 1,n\rrbracket$ such that $w_{j}-w_{j-1} \geq \gamma n$. We therefore obtain
\[\vert \mu_n(\mathbf{w}\times A)-\nu_n(\mathbf{w}\times A)\vert \leq \Pr{I\geq n} + \left|\frac{\Pr{\zeta \geq n}}{\Es{\zeta} \Pr{X_1\geq \gamma n}}-1\right|, \] which goes to zero as $n\rightarrow \infty$ by \eqref{eq:BB} and the fact that $I$ is almost surely finite. \end{proof}

\begin{proof}[Proof of \cref{lem:PF}]
 First set $\eta_{n}= \sqrt{n \Es{\vert W_n+\gamma n \vert}}$ and recall that $\Yn_{j} = \Zn_j-\Zn_{j-1}$ for every $j \geq 1$. Then, observe that for every fixed $K>0$, as soon as $n \geq K$ the event
\begin{align*}
\{I \leq K\} \cap \left\{ \max_{1 \leq i < I} \Yn_{i}< \gamma n \right\} \cap \left\{ \Yn_{I}> \gamma n+\eta_{n}\right\} 
&\cap \left\{  \min_{0 \leq i \leq n-I} \left(\Zn_{I+i}-\Zn_{I}\right) > -\gamma n-\eta_{n} \right\}\\
& \cap\left\{ \max_{I < i \leq n } \Yn_{i}< \gamma n \right\} 
\end{align*}
is included in the event $\{(\Zn_0, \ldots, \Zn_n )\in G_{n}\}$. Thus, $\nu_{n}(\overline{G_{n}})$ is bounded from above by
\begin{align*}
	   \Pr{I>K} &+ \max_{1\leq j < K}\Pr{\max_{1\leq i \leq j}X_i \geq \gamma n \ \Big| \ \zeta \geq j} +\Pr{X_1 < \gamma n + \eta_n \mid X_1 \geq \gamma n } \\
		& +  \max_{1 \leq j < K} \Pr{ \min_{1\leq i \leq n-j}{W_i}\leq -\gamma n-\eta_{n}}+ \max_{1 \leq j \leq n-K} \Pr{\max_{1 \leq i \leq j} X_{i} \geq \gamma n}.
\end{align*}
Since $\P(I>K) \rightarrow 0$ as $K \rightarrow \infty$, it is enough to show that each one of the  last four terms of the above inequality tends to $0$ as $n \rightarrow \infty$.

\paragraph*{First term.} This follows from the fact that $\max_{1 \leq i \leq K}{X_i} $ is almost surely finite.

\paragraph*{Second term.} Write \begin{equation*}
	 \Pr{ X_1< \gamma n + \eta_{n} \ \middle| \ X_1 \geq \gamma n} =1- \frac{\P(X_1 \geq  \gamma n + \eta_{n})}{\P(X_1 \geq \gamma n)}=  1-\frac{L(\gamma n+\eta_{n})}{L(\gamma n)}   \frac{1}{(1+\eta_{n}/(\gamma n))^{\beta}}
	 \end{equation*}
which tends to $0$ as $n \rightarrow \infty$ since $\theta_n\coloneqq\Es{\vert W_n+\gamma n \vert}/n \rightarrow 0$ by the law of large numbers, and thus $\eta_{n}/n=\sqrt{\theta_{n}} \rightarrow 0$.

\paragraph*{Third term.} Using  Doob's inequality (see e.g.~\cite[Theorem 5.4.2]{Dur10}), write
\[\max_{1 \leq j < K} \Pr{ \min_{1\leq i \leq n-j}{W_i}\leq -\gamma n-\eta_{n}} \leq \Pr{ \min_{1\leq i \leq n}{W_i}\leq -\gamma n-\eta_{n}}\leq \frac{\Es{\vert W_n+\gamma n \vert}}{\eta_n}.\] 
But $\eta_{n}=n\sqrt{\theta_n}$ so we have  $\Pr{\min_{1\leq i \leq n}{W_i}\leq -\gamma n-\eta_{n}}  \leq \sqrt{\theta_{n}}\rightarrow 0$ as $n \rightarrow \infty$. 

\paragraph*{Fourth term.} We have
\[\max_{1 \leq j \leq n-K} \Pr{\max_{1 \leq i \leq j} X_{i} \geq \gamma n } \leq  \Pr{\max_{1 \leq i \leq n} X_{i} \geq \gamma n} \leq n \Pr{X_{1} \geq \gamma n}= \frac{nL(\gamma n)}{(\gamma n)^{\beta}}\]
which tends to $0$ as $n \rightarrow \infty$ since $\beta>1$.
\end{proof}

\subsection{Application to looptrees}

We first state a straightforward consequence of \cref{thm:dTV} (details are left to the reader). If $I$ is an interval, we denote by ${\D}(I, \R)$ the set of real-valued càdlàg functions on $I$ equipped with the Skorokhod $J_{1}$ topology (see \cite[Chapter VI]{JS03} for background).

\begin{proposition}\label{thm:scalinglimit}
Let $(\Wn_{i} : i \geq 0)$ be distributed as the random walk $(W_{i} : i \geq 0)$ under the conditional probability $ \P( \, \cdot \, | \zeta \geq n)$ (and set $\Wn_i=0$ for $i<0$). Let also $J$ be the real-valued random variable such that $\Pr{J \geq x}=(\gamma/x)^{\beta}$ for $x \geq \gamma$. Then, the convergence
\[ \left(  \frac{W^{(n)}_{\lfloor n t \rfloor}}{n} : t \geq -1 \right)  \quad \xrightarrow[n\to\infty]{}  \quad   \left( (J-\gamma t) \mathbbm{1}_{t \geq 0} : t \geq -1\right)\] holds in distribution in $\D([-1,\infty),\R)$.
In addition, the convergence
\[\frac{1}{n}\inf\left\lbrace i\geq 1 : W^{(n)}_i<0 \right\rbrace\quad \xrightarrow[n\to\infty]{}  \quad \frac{J}{\gamma}\] holds jointly in distribution.

\end{proposition}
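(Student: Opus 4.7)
The plan is to reduce the statement to the coupling process $(Z^{(n)}_i : i \geq 0)$ given by \cref{thm:dTV} and then read off the asymptotics from the three independent blocks of its construction. Since convergence in total variation transfers to convergence in distribution for any measurable functional of the path, \cref{thm:dTV} allows us to replace $W^{(n)}$ by $Z^{(n)}$ throughout. Recall that $Z^{(n)}$ behaves as a walk conditioned on $\{\zeta \geq I\}$ on $\llbracket 0, I{-}1 \rrbracket$, then makes a large jump $Y^{(n)}_I$ with the law of $X_1$ under $\P(\cdot \mid X_1 \geq \gamma n)$, and then evolves as an independent copy $(W'_i)_{i \geq 0}$ of the unconditioned walk.

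Three input asymptotics are needed. First, $I$ and $Z^{(n)}_{I-1}$ have distributions not depending on $n$ and are a.s.\ finite, so $I/n \to 0$ and $Z^{(n)}_{I-1}/n \to 0$ in probability. Second, by the slowly varying property of $L$, for every $x > \gamma$,
\[
\Pr{Y^{(n)}_I \geq xn} \;=\; \frac{L(xn)}{L(\gamma n)}\left(\frac{\gamma}{x}\right)^{\beta} \;\xrightarrow[n\to\infty]{}\; \left(\frac{\gamma}{x}\right)^{\beta},
\]
so $Y^{(n)}_I/n$ converges in distribution to $J$. Third, the functional law of large numbers applied to $(W'_i)$ yields $W'_{\lfloor n t \rfloor}/n \to -\gamma t$ uniformly on compact subsets of $[0,\infty)$, in probability.

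Combining these, on the event $\{I < \varepsilon n\}$ (of probability tending to $1$ for each $\varepsilon > 0$) and for $t \geq \varepsilon$,
\[
\frac{Z^{(n)}_{\lfloor nt \rfloor}}{n} \;=\; \frac{Z^{(n)}_{I-1}}{n} + \frac{Y^{(n)}_I}{n} + \frac{W'_{\lfloor nt \rfloor - I}}{n}
\]
converges in distribution to $J - \gamma t$, uniformly on compact subsets of $[\varepsilon, \infty)$, while on $[-1, 0)$ the rescaled process is identically zero. To upgrade this to Skorokhod $J_1$ convergence on $\mathbb{D}([-1,\infty),\R)$ we must accommodate the jump of size $J$ that the limit path has at $0$: choosing continuous strictly increasing bijections $\lambda_n : [-1,\infty) \to [-1,\infty)$ with $\lambda_n(0) = I/n$, equal to the identity outside a shrinking neighbourhood of $0$, we get $\sup|\lambda_n - \mathrm{id}| \to 0$ in probability (since $I/n \to 0$), and the three asymptotics above guarantee that $Z^{(n)}_{\lfloor n \lambda_n(t) \rfloor}/n$ then converges uniformly on compacts to $(J - \gamma t)\mathbbm{1}_{t \geq 0}$.

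Finally, for the joint convergence of the first passage time, observe that $Z^{(n)}_i \geq 0$ for $i < I$, while for $i \geq I$ the inequality $Z^{(n)}_i < 0$ reads $W'_{i-I} < -M_n$, where $M_n \coloneqq Z^{(n)}_{I-1} + Y^{(n)}_I$. Since $M_n/n$ converges in distribution to $J$ and the functional law of large numbers identifies the first passage time of $W'$ below $-M_n$ as $M_n/\gamma + o_{\P}(n)$, dividing by $n$ and using $I/n \to 0$ gives $\inf\{i \geq 1 : Z^{(n)}_i < 0\}/n \to J/\gamma$ in distribution, jointly with the path convergence since all the approximations above are already simultaneous. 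The main subtlety in this plan is the $J_1$ time-change argument required to reproduce the discontinuity of the limit at $t = 0$; every other step is an assembly of standard asymptotics.
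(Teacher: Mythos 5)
Your proposal is correct and follows exactly the route the paper intends: the paper states this proposition as ``a straightforward consequence of \cref{thm:dTV} (details are left to the reader)'', and your argument — coupling $W^{(n)}$ with $Z^{(n)}$ via the total variation convergence, then combining the tightness of the $n$-independent initial block, the convergence $Y^{(n)}_I/n\Rightarrow J$ from slow variation, the functional law of large numbers for the final block, and a time change aligning the jump at $I/n$ with the limit's jump at $0$ — is precisely the omitted verification. No gaps.
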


Observe that instead of working as usual with $\D(\R_{+},\R)$, we work with $\D([-1,\infty),\R)$
by extending our function with value $0$ on $[-1,0)$.
The reason is that our limiting process   almost surely
takes a positive value in $0$ (it ``starts with a jump''),
while $(W^{(n)}_{i} : i \geq 0)$ stays small for a positive time.

Let us mention that this result extends \cite[Theorem 3.2]{Dur80} since it allows infinite variance for the step distribution of the random walk. This possibility is  mentioned in \cite{Dur80}, but the proof of Theorem 3.2 in \cite{Dur80} uses a finite variance condition (see in particular the estimates at the top of page 285 in \cite{Dur80}).

We finally prove Theorem \ref{thm:circle} using Proposition \ref{thm:scalinglimit}.

\begin{proof}[Proof of \cref{thm:circle}] The proof is similar to that of \cite[Theorem 1.2]{CK15}. For every $n\geq 1$, let $\Tgn$ be a $\BGW_\mu$ tree conditioned to have at least $n$ vertices, such that the offspring distribution $\mu$ is subcritical and satisfies $\mu([i,\infty))=L(i)i^{-\beta}$. 

Let us consider the {\L}ukasiewicz path $(\W_i(\Tgn) : i\geq 0)$ of the random tree $\Tgn$. By construction (see for instance \cite[Section 2]{Du03}), $(\W_i(\Tgn) : i\geq 0)$ has the same distribution as a random walk $(W_i : i\geq 0)$ with step distribution $\xi$ defined by \[\xi(i)=\mu(i+1), \quad i\geq -1,\]  conditionally on the event $\{\zeta \geq n\}$. Thus, the requirements of Proposition \ref{thm:scalinglimit} are met by the process $(\W_i(\Tgn) : i\geq 0)$. First, by the correspondence between jumps of the {\L}ukasiewicz path and the degrees of the vertices in  $\Tgn$, there exists an asymptotically unique vertex $v^*_n\in \Tgn$ with maximal degree, such that 
\[\frac{k_{v^*_n}(\Tgn)}{n} \quad \xrightarrow[n\to\infty]{}  \quad J \quad \text{in distribution},\] where $J$ is the random variable defined in Proposition \ref{thm:scalinglimit}. Moreover, if $\{T^*_j : 0\leq j \leq k_{v^*_n}(\Tgn) \}$ are the connected components of $\Tgn\backslash\{v^*_n\}$, we have
\[\frac{1}{n}\sup_{0\leq j \leq k_{v^*_n}(\Tgn)}{\vert T^*_j \vert }\quad \xrightarrow[n\to\infty]{}  \quad 0, \quad \text{in probability},\] see \cite[Corollary 1]{Kor15} for a similar statement. Now, recall the construction of the looptree $\Loop(\tau)$ from a plane tree $\tau$, detailed in Section \ref{sec:Intro}. By construction, $\Loop(\Tgn)$ has a unique face of degree $k_{v^*_n}(\Tgn)+1$, and the largest connected component of $\Loop(\Tgn)$ deprived of this face has size $o(n)$. This completes the proof. \end{proof}

\section{A spinal decomposition}
\label{sec:spinal}

The goal of this section is to establish \cref{thm:trunk2}.

\subsection{A general formula}

Recall from \cref{ss:spinal} the notation $\Trunk(\tau,u)$ when $\tau$ is a tree and $u \in \tau$, the notation $\Lambda(\tau)$ for the number of leaves of $\tau$ and the definition of $\Trunk^{\ast}_{h}$,  the  ``size-biased trunk'' of height $h\geq 0$. Throughout this section, we denote by $ \mathcal{T}_{n}$  a $\BGW_{\mu}$ tree conditioned on having $n$ vertices and by $ \Vcn $ a vertex chosen uniformly at random in $ \mathcal{T}_{n}$.  Let $(W_{n} : n \geq 0)$ be the random walk with jump distribution $\P(W_{1}=i)=\mu(i+1)$ for $i \geq -1$ (started at 0), and set $\phi_{n}(j)\coloneqq\P(W_{n}=-j)$ to simplify notation.

The following result is a simple consequence of the size-biasing relation in \cite{LPP95b}, see also \cite{BM14,Mar16} for similar statements in a different context. 

\begin{proposition}
\label{prop:bias}
For every nonnegative functional $F$ defined on the set of all trees and every integer $h \in \N$,
\[\Es{F(\Trunk( \mathcal{T}_{n}, \Vcn))\mathbbm{1}_{| \Vcn|=h}}=\Es{F(\Trunk^{\ast}_{h}) {\Lambda(\Trunk^{\ast}_{h})}\frac{  \phi_{n-h}(-\Lambda(\Trunk^{\ast}_{h}))}{(n-h)\phi_{n}(-1)}}.\] 
\end{proposition}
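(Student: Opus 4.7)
I would prove the identity by a direct enumeration, grouping trees according to their trunk and then recognizing the resulting sum as a size-biased expectation (in the spirit of the classical spinal decomposition of \cite{LPP95b}).

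First, since $\Vcn$ is uniform over the $n$ vertices of $\mathcal{T}_n$ and Kemperman's formula (applied to the \L{}ukasiewicz walk) gives $\BGW_\mu(\mathbb{A}_n) = \phi_n(-1)/n$, I would rewrite the LHS as
\begin{equation*}
\Es{F(\Trunk(\mathcal{T}_n, \Vcn))\mathbbm{1}_{|\Vcn|=h}} \;=\; \frac{1}{\phi_n(-1)} \sum_{\tau \in \mathbb{A}_n} \sum_{u \in \tau,\; |u|=h} F(\Trunk(\tau, u))\,\BGW_\mu(\tau).
\end{equation*}

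Next, I would group the double sum by the value of the trunk. Any admissible trunk $T$ of height $h$ has a spine $v_0, v_1, \ldots, v_{h-1}$ with degrees $d_i := k_{v_i}(T) \geq 1$, a distinguished spine-child position at each level, a leaf endpoint $v_h$, and $d_i - 1$ non-spine leaves at each spine vertex; thus $|T| = h + \lambda$ with $\lambda := \Lambda(T) = 1 + \sum_{i<h}(d_i - 1)$ leaves. A pair $(\tau,u)$ with $|u|=h$ and $\Trunk(\tau, u) = T$ is then encoded bijectively by $T$ together with the ordered family $(S_\ell)_\ell$ of subtrees grafted at the leaves of $T$, subject to $\sum_\ell|S_\ell| = n - h$. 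The BGW weight factors as
\begin{equation*}
\BGW_\mu(\tau) \;=\; \prod_{i=0}^{h-1}\mu(d_i) \;\cdot\; \prod_\ell \BGW_\mu(S_\ell),
\end{equation*}
and by the Otter--Dwass / cycle-lemma identity, the total $\BGW_\mu$-mass of ordered forests of $\lambda$ trees whose sizes sum to $n - h$ equals $\lambda\,\phi_{n-h}(-\lambda)/(n-h)$. Combining these ingredients,
\begin{equation*}
\Es{F(\Trunk(\mathcal{T}_n, \Vcn))\mathbbm{1}_{|\Vcn|=h}} \;=\; \sum_T F(T)\,\frac{\Lambda(T)\,\phi_{n-h}(-\Lambda(T))}{(n-h)\,\phi_n(-1)} \prod_{i=0}^{h-1}\mu(d_i).
\end{equation*}

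Finally, I would recognize $\prod_{i=0}^{h-1}\mu(d_i)$ as the mass of $T$ under the law of $\Trunk^{\ast}_h$. Indeed, by definition each spine vertex $v^{\ast}_i$ ($i<h$) has $K_i$ children with law $\mu^{\ast}(k) = k\mu(k)$ and the next spine vertex is then uniform among them, so for any admissible trunk $T$,
\begin{equation*}
\Pr{\Trunk^{\ast}_h = T} \;=\; \prod_{i=0}^{h-1}\frac{d_i\mu(d_i)}{d_i} \;=\; \prod_{i=0}^{h-1}\mu(d_i),
\end{equation*}
the size-biasing exactly cancelling the uniform choice of the spine child. Substituting rewrites the previous sum as $\Es{F(\Trunk^{\ast}_h)\,\Lambda(\Trunk^{\ast}_h)\,\phi_{n-h}(-\Lambda(\Trunk^{\ast}_h))/((n-h)\,\phi_n(-1))}$, which is the RHS. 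I do not anticipate any real obstacle; the only step requiring some care is the second one, namely the bijective encoding of $(\tau, u)$ as a trunk plus a forest of subtrees and the correct bookkeeping of the sizes $|T| = h + \lambda$ and $\sum_\ell|S_\ell| = n - h$.
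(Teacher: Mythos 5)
Your argument is correct and follows essentially the same route as the paper: reduce to a sum over vertices at height $h$ in an unconditioned $\BGW_\mu$ tree, decompose along the spine into a size-biased trunk plus a forest of $\Lambda$ subtrees of total size $n-h$, and evaluate both $\BGW_\mu(\mathbb{A}_n)$ and the forest mass via Kemperman's formula. The only difference is that you derive the spinal size-biasing identity by explicit enumeration where the paper simply cites it (as \cite[Equation~(24)]{Duq09}), and your bookkeeping of $|T|=h+\lambda$ and $\sum_\ell|S_\ell|=n-h$ is accurate.
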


\begin{proof}
Let $ \mathcal{T}$ bet a $\BGW_{\mu}$ tree. By a standard size-biasing relation (see e.g.~\cite[Equation~(24)]{Duq09}):
\begin{align*}
\Es{F(\Trunk( \mathcal{T}_{n}, \Vcn))\mathbbm{1}_{| \Vcn|=h}}&= \frac{1}{n}\Es{\sum_{|u|=h}F(\Trunk( \mathcal{T}_{n}, u))}\\
&=\frac{1}{n \P( | \mathcal{T}|=n) } \Es{\sum_{|u|=h}F(\Trunk( \mathcal{T}, u))\mathbbm{1}_{| \mathcal{T}|=n }}\\
&= \frac{1}{n \P( | \mathcal{T}|=n) }\Es{F(\Trunk^{\ast}_{h})  \Psi_{\Lambda(\Trunk^{\ast}_{h})}(n-h)},
\end{align*}
where $\Psi_{k}(n)$ is the probability that $k$ independent $\BGW_{\mu}$ trees have total size $n$. By Kemperman's formula (see e.g.~\cite[Section 6.1]{Pit06}), we have $ \P( | \mathcal{T}|=n)=  \tfrac{1}{n}\P(W_{n}=-1)$ and $\Psi_{k}(n)= \tfrac{k}{n} \P(W_{n}=-k)$ which concludes the proof.
\end{proof}

\subsection{Application to offspring distributions in the domain of attraction of a stable law}

Assume that $\mu$ is a critical offspring distribution belonging to the domain of attraction of a stable law of index $\alpha \in (1,2]$.
This means that if $\sigma_{\mu}^{2} \in (0,\infty]$ is the variance of $\mu$ and if $X_{1}, X_{2}, \ldots$ are i.i.d.\ random variables with distribution $\mu$, then there exists an increasing sequence $(B_{n} : n\geq 1)$  such that  $(X_{1}+\cdots+X_{n}-n)/B_{n}$ converges in distribution to a random variable with Laplace exponent $\lambda \mapsto e^{\lambda^{\alpha}}$ (when $\alpha=2$, this corresponds to $\sqrt{2}$ times a standard Gaussian random variable; in accordance with the convention of \cite{CK14b}). Equivalently, this means that if $X$ is a random variable with distribution $\mu$, there is a slowly varying function $\ell$ such that $\textrm{Var}(X \cdot \mathbbm{1}_{X \leq n})= n^{2-\alpha} \ell(n)$, and
\begin{equation}
\label{eq:Bn} \frac{n\ell(B_{n})}{B_{n}^{\alpha}}  \quad \mathop{\longrightarrow}_{n \rightarrow \infty} \quad  \frac{1}{(2-\alpha)\Gamma(-\alpha)},
\end{equation}
see \cite[Section~2.1]{Kor17} (by continuity, the quantity $((2-\alpha)\Gamma(-\alpha))^{-1}$ is interpreted as equal to $2$ for $\alpha=2$). 

Our goal is now to prove \cref{thm:trunk2}. We start with a preliminary lemma.
Denote by $(X^{\ast}_{i} : i \geq 1)$ a sequence of i.i.d.\ random variables with distribution $\mu^{\ast}$ and set $W^{\ast}_{i}=X_{1}^{\ast}+ \cdots+X_{i}^{\ast}$.
 \begin{lemma}
 \label{lem:drift}
Assume that $\alpha=2$ and denote by $\sigma_{\mu}^{2} \in (0,\infty]$ the variance of $\mu$.
\begin{enumerate}
\item[(i)] The convergence
 \[  \left( \frac{W^{\ast}_{\lfloor t \frac{n}{B_{n}} \rfloor} }{B_{n}} : t \geq 0 \right)  \quad \xrightarrow[n \rightarrow \infty]{} \quad ( 2(1+\sigma_{\mu}^{-2}) t: t \geq 0)\]
 holds in probability, uniformly on compact subsets of $\R_{+}$.
\item[(ii)] The convergence
\[ \left(\frac{\Lambda(\Trunk^{\ast}_{\lfloor t \frac{n}{B_{n}} \rfloor})}{B_{n}} : t \geq 0\right) \quad \xrightarrow[n \rightarrow \infty]{} \quad \left(  2 t:t \geq 0\right)\]
holds in probability, uniformly on compact subsets of $\R_{+}$.
\end{enumerate}
 \end{lemma}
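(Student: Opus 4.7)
The plan is to first prove (i) via a weak law of large numbers, and then deduce (ii) from (i) using the identity
\[\Lambda(\Trunk^\ast_h) = 1 + W^\ast_h - h,\]
which is immediate from the construction of $\Trunk^\ast_h$: the leaves consist of the terminal vertex $v^\ast_h$ together with, for each $0 \leq i \leq h-1$, the $k_i - 1$ off-spine children of $v^\ast_i$, where $k_i$ has law $\mu^\ast$. Setting $h = \lfloor t n / B_n \rfloor$ and dividing by $B_n$, the conclusion of (ii) will follow from (i) provided $h/B_n \to 2 t \sigma_\mu^{-2}$ (with the convention $\sigma_\mu^{-2} = 0$ when $\sigma_\mu^2 = \infty$); this is immediate since $h/B_n \sim tn/B_n^2 \to 2t/\sigma_\mu^2$ in the finite variance case, and $n/B_n^2 = 1/\ell(n)^2 \to 0$ in the infinite variance case. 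The limit in (ii) is then $2(1 + \sigma_\mu^{-2}) t - 2 \sigma_\mu^{-2} t = 2 t$.

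For (i), the finite variance case is immediate: $\mathbb{E}[X^\ast] = \sum_j j^2 \mu(j) = 1 + \sigma_\mu^2 < \infty$, so the weak LLN gives $W^\ast_h/h \to 1 + \sigma_\mu^2$ in probability, and multiplying by $h/B_n \to 2 t/\sigma_\mu^2$ yields $W^\ast_h/B_n \to 2(1 + \sigma_\mu^{-2}) t$.

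The main obstacle is the infinite variance case, in which $\mathbb{E}[X^\ast] = +\infty$ forces a truncation argument. I truncate each increment at $B_n$, writing $X^{\ast,<}_i = X^\ast_i \mathbbm{1}_{X^\ast_i \leq B_n}$, and derive from Karamata's theorem applied to the tail $\mu([i,\infty)) = L(i)/i^2$ the three estimates $\mathbb{P}(X^\ast > B_n) \sim 2 L(B_n)/B_n$, $\mathbb{E}[X^{\ast,<}] = \mathbb{E}[X^2 \mathbbm{1}_{X \leq B_n}] \sim \ell(B_n)$, and $\mathrm{Var}(X^{\ast,<}) \leq \mathbb{E}[X^3 \mathbbm{1}_{X \leq B_n}] \sim 2 B_n L(B_n)$. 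Using the normalisation $n \ell(B_n)/B_n^2 \to 2$ from \eqref{eq:Bn}, the middle estimate gives $h \cdot \mathbb{E}[X^{\ast,<}]/B_n \to 2 t$, whereas the other two yield $h \cdot \mathbb{P}(X^\ast > B_n) = O(L(B_n)/\ell(B_n))$ and $h \cdot \mathrm{Var}(X^{\ast,<})/B_n^2 = O(L(B_n)/\ell(B_n))$. The key point making these two quantities vanish is $L(n) = o(\ell(n))$, which follows from the representation $\ell(n) \sim 2 \int_1^n L(s)/s \, \mathrm{d}s$ (so that the integral dominates $L(n)$ for any slowly varying $L$): truncation is then asymptotically invisible, Chebyshev's inequality kills the fluctuations on scale $B_n$, and $W^\ast_h/B_n \to 2 t$ in probability, matching $2(1 + \sigma_\mu^{-2}) t$ with $\sigma_\mu^{-2} = 0$.

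Finally, the uniform-on-compacts upgrade is routine: both $h \mapsto W^\ast_h$ and $h \mapsto \Lambda(\Trunk^\ast_h)$ are nondecreasing (the former since $\mu^\ast(0) = 0$, the latter since each spine step contributes $k_i - 1 \geq 0$ off-spine leaves), while the candidate limits $t \mapsto 2(1+\sigma_\mu^{-2}) t$ and $t \mapsto 2 t$ are continuous. A Dini-type argument---partition $[0, T]$ into a fine deterministic grid and sandwich the rescaled process between its values at adjacent grid points using monotonicity---then upgrades the pointwise-in-probability convergence proved above to uniform convergence in probability on compact subsets of $\R_+$.
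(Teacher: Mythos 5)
Your proof is correct, and its skeleton coincides with the paper's: both establish (i) first, deduce (ii) from the identity $\Lambda(\Trunk^{\ast}_{h})=W^{\ast}_{h}-h+1$, and upgrade pointwise convergence in probability to uniform convergence on compacts via monotonicity of the paths and continuity of the (linear) limits --- the paper simply cites \cite[Theorem 16.14]{Kal02} where you run the Dini-type sandwich by hand. The genuine difference is in the proof of (i) in the infinite-variance case. The paper computes the Laplace transform of $W^{\ast}_{\lfloor n/B_{n}\rfloor}/B_{n}$ and quotes the expansion $\Es{e^{-\lambda X^{\ast}_{1}}}=1-\lambda\ell(1/\lambda)(1+o(1))$ from \cite[Equation~(44)]{Kor17}, after which \eqref{eq:Bn} gives the limit $e^{-2\lambda}$ directly; you instead reprove the needed law of large numbers from scratch by truncating at level $B_{n}$, controlling $\P(X^{\ast}>B_{n})$, $\Es{X^{\ast}\mathbbm{1}_{X^{\ast}\leq B_{n}}}$ and the truncated variance via Karamata, and invoking Chebyshev. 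Your route is more self-contained and makes explicit the key fact $L=o(\ell)$ (via \cite[Proposition~1.5.9a]{BGT89}, the same ingredient the paper uses elsewhere in the proof of Lemma \ref{lem:SpinalDecomposition}), at the cost of being longer; the paper's route outsources the hard estimate to \cite{Kor17} and is shorter. All of your Karamata asymptotics ($\P(X^{\ast}>x)\sim 2L(x)/x$, $\Es{X^{2}\mathbbm{1}_{X\leq x}}\sim\ell(x)\sim 2\int_{1}^{x}L(s)s^{-1}\,\mathrm{d}s$, $\Es{X^{3}\mathbbm{1}_{X\leq x}}\sim 2xL(x)$) check out, as does the bookkeeping $h/B_{n}\to 2t\sigma_{\mu}^{-2}$ used to pass from (i) to (ii).
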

 
 \begin{proof}
It is enough to establish both assertions for $t=1$ (see e.g.~\cite[Theorem 16.14]{Kal02}).

 First assume that $\mu$ has finite variance, so that we may take $B_{n}= \sigma_{\mu}\sqrt{n/2}$. Since $\Es{X_{1}^{\ast}}=\sigma_{\mu}^{2}+1$, we have
 \[\Es{e^{-\lambda X_{1}^{\ast}}}=1-(\sigma_{\mu}^{2}+1)\lambda(1+o(1)), \qquad \lambda \rightarrow 0^+.\]
 Therefore
\[
 \Es{e^{-\frac{\lambda }{B_{n}} W^{\ast}_{\lfloor  \frac{n}{B_{n}} \rfloor} } } = \left( 1-\frac{\lambda }{B_{n}}(\sigma_{\mu}^{2}+1)(1+o(1)) \right)^{\lfloor  \frac{n}{B_{n}} \rfloor}= \exp \left( - \lambda \frac{n}{B_{n}^{2}} (\sigma_{\mu}^{2}+1)(1+o(1))\right).\]
Since $n/B_{n}^{2} \rightarrow {2}/{\sigma_{\mu}^{2}}$,  $(i)$ follows.

Now assume that $\mu$ has infinite variance. By \cite[Equation~(44)]{Kor17}, we have
 \[\Es{e^{-\lambda X_{1}^{\ast}}}=1-\lambda\ell(1/\lambda)(1+o(1)), \qquad \lambda \rightarrow 0.\]
Therefore
\[
 \Es{e^{-\frac{\lambda }{B_{n}} W^{\ast}_{\lfloor  \frac{n}{B_{n}} \rfloor} } } = \left( 1-\frac{\lambda }{B_{n}}\ell\left(\frac{B_{n}}{\lambda}\right)(1+o(1)) \right)^{\lfloor  \frac{n}{B_{n}} \rfloor}= \exp \left( - \lambda \frac{n}{B_{n}^{2}} \ell \left( \frac{B_{n}}{\lambda} \right)(1+o(1))\right),\]
which converges to $ e^{-2\lambda}$ as $ n \rightarrow \infty$,
 by \eqref{eq:Bn} and the fact that $\ell$ is slowly varying. 
 
 The second assertion readily follows from $(i)$ by  observing that we can write $(\Lambda(\Trunk^{\ast}_{h}): h\geq 0)$ as $(W^{\ast}_{h}-h+1 : h\geq 0)$ (and using the fact that $n/B_{n}^{2} \rightarrow 2/\sigma_{\mu}^{2}$ when $\mu$ has finite variance).
 \end{proof}
 
We now turn to the proof of \cref{thm:trunk2}. Recall that $ \Vc_{n} ^{t}$ stands for a vertex chosen uniformly at random among all those at height $\lfloor t \frac{n}{B_{n}} \rfloor$ in $\Tc_n$.
 
\begin{proof}[Proof of \cref{thm:trunk2}]
Fix $\varepsilon>0$.  In order to establish both assertions, we shall estimate $\Es{F(\Trunk( \mathcal{T}_{n},\Vcn))\mathbbm{1}_{|\Vcn|=\lfloor t \frac{n}{B_{n}} \rfloor}}$ uniformly for $ \varepsilon \leq t \leq 1/\varepsilon$ and for all measurable functions $F$ with $\lVert F \rVert_{\infty} \leq 1$. To this end, using  \cref{prop:bias} we write
\begin{align*}
&\Es{F\left(\Trunk( \mathcal{T}_{n}, \Vcn),\lfloor t \frac{n}{B_{n}} \rfloor\right)\mathbbm{1}_{| \Vcn|=\lfloor t \frac{n}{B_{n}} \rfloor}}\\
&  \qquad \qquad \qquad  =\Es{F\left(\Trunk^{\ast}_{\lfloor t \frac{n}{B_{n}} \rfloor},\lfloor t \frac{n}{B_{n}} \rfloor\right) {\Lambda(\Trunk^{\ast}_{\lfloor t \frac{n}{B_{n}} \rfloor})}\frac{  \phi_{n-\lfloor t \frac{n}{B_{n}} \rfloor}(-\Lambda(\Trunk^{\ast}_{\lfloor t \frac{n}{B_{n}} \rfloor}))}{(n-\lfloor t \frac{n}{B_{n}} \rfloor)\phi_{n}(-1)}}.
\end{align*}
Then by combining  \cref{prop:bias} and \cref{lem:drift} with the local limit theorem (see e.g.~\cite[Theorem 4.2.1]{IL71}), in virtue of which we have
\[  \sup_{k \in \Z}  \left| B_{n} \phi_{n}(k)-  \frac{1}{\sqrt{4 \pi}}e^{- \frac{k^{2}}{4 B_{n}^{2}}} \right|  \quad \mathop{\longrightarrow}_{n \rightarrow \infty} \quad 0,\]
 we get
\begin{equation}
\label{eq:unif}
\Es{F\left(\Trunk( \mathcal{T}_{n},\Vcn),\lfloor t \frac{n}{B_{n}} \rfloor\right)\mathbbm{1}_{|\Vcn|=\lfloor t  \frac{n}{B_{n}}\rfloor}} = \frac{B_{n}}{n}  \left( \Es{F\left(\Trunk^{\ast}_{\lfloor t  \frac{n}{B_{n}} \rfloor},\lfloor t \frac{n}{B_{n}} \rfloor\right)} 2 t e^{-t^{2}} +o(1)\right),
\end{equation}
uniformly  for $ \varepsilon \leq t \leq 1/\varepsilon$ and $\lVert F \rVert_{\infty}  \leq 1$ (we also use the fact that $n/B_{n}=o(n)$).

Now observe that by taking $F \equiv 1$ in \eqref{eq:unif}, it follows that
\begin{equation}
\label{eq:llheight}
\P \left( |\Vcn|=\lfloor t  \frac{n}{B_{n}}\rfloor\right)  \quad \mathop{\sim}_{n \rightarrow \infty} \quad  \frac{B_{n}}{n} 2t e^{-t^{2}/2},\end{equation}
uniformly for $ \epsilon \leq t \leq 1/\epsilon$. The first assertion then follows from the simple identity
\[\Es{F( \Trunk( \mathcal{T}_{n},\Vc_{n} ^{t}))}=\Es{ F(\Trunk( \mathcal{T}_{n},\Vcn)) \ \Big| \ {|\Vcn|=\lfloor t  \frac{n}{B_{n}}\rfloor}}.\]

For the second assertion, since $ \mathcal{R}$ is almost surely finite, it is enough to show that
\[\Es{F\left(\Trunk( \mathcal{T}_{n}, \Vcn) \right)\mathbbm{1}_{ \epsilon \frac{n}{B_{n}} \leq | \Vcn| \leq  \frac{1}{\epsilon} \frac{n}{B_{n}} }}-  \int_{\epsilon}^{1/\epsilon} {\d}t \   \Es{F\left(\Trunk^{\ast}_{\lfloor t  \frac{n}{B_{n}} \rfloor}\right)} 2 t e^{-t^{2}}   \quad \mathop{\longrightarrow}_{n \rightarrow \infty} \quad 0,\]
uniformly for $ \varepsilon \leq t \leq 1/\varepsilon$ and measurable functions $F$ with $\lVert F \rVert_{\infty} \leq 1$. This is also a consequence of \eqref{eq:unif}, as we have
\begin{align*}
\Es{F\left(\Trunk( \mathcal{T}_{n}, \Vcn) \right)\mathbbm{1}_{ \epsilon \frac{n}{B_{n}} \leq | \Vcn| \leq  \frac{1}{\epsilon} \frac{n}{B_{n}} }}
& = \sum_{k=\lfloor \epsilon \frac{n}{B_{n}} \rfloor}^{ \lfloor \frac{1}{\epsilon} \frac{n}{B_{n}} \rfloor}\Es{F\left(\Trunk( \mathcal{T}_{n}, \Vcn) \right)\mathbbm{1}_{ | \Vcn| =k}} \\
&= \int_{\epsilon+o(1)}^{1/\epsilon+o(1)} {\d}t \ \frac{n}{B_{n}}  \Es{F(\Trunk( \mathcal{T}_{n},\Vcn))\mathbbm{1}_{|\Vcn|=\lfloor t  \frac{n}{B_{n}}\rfloor}}  \\
&= o(1)+ \int_{\epsilon}^{1/\epsilon} {\d}t \   \Es{F\left(\Trunk^{\ast}_{\lfloor t  \frac{n}{B_{n}} \rfloor},\lfloor t \frac{n}{B_{n}} \rfloor\right)} 2 t e^{-t^{2}} .
\end{align*}
This completes the proof.
\end{proof}

\begin{remark}[Case $1<\alpha<2$] When $1<\alpha<2$, the conclusions of \cref{thm:trunk2} are not true. Indeed, in this case,  $ \frac{1}{B_{n}}\Lambda(\Trunk^{\ast}_{\lfloor t \frac{n}{B_{n}} \rfloor})$ converges in distribution to a non-degenerate random variable (a spectrally positive stable random variable with index $\alpha-1$), which implies that asymptotically the degrees on the spine of $\Trunk( \mathcal{T}_{n},\Vc_{n} ^{t})$ are not i.i.d.\ anymore, but biased by a functional involving the total number of leaves. However, still using the local limit theorem,  it remains possible to show that if $(F_{n} : n \geq 1)$ is a sequence of measurable functions such that $\lVert F_{n} \rVert \leq 1$ for every $n \geq 1$,
\begin{enumerate}
\item[(i)] for fixed $t>0$, if $\Es{F_{n}(\Trunk^{\ast}_{\lfloor t  \frac{n}{B_{n}} \rfloor })} \rightarrow 0$, then $\Es{F_{n}( \Trunk( \mathcal{T}_{n},\Vc_{n} ^{t}))} \rightarrow 0$;
\item[(ii)] if $\Es{F_{n}(\Trunk^{\ast}_{\lfloor  \mathcal{R}  \frac{n}{B_{n}} \rfloor })} \rightarrow 0$, then $\Es{F_{n}(\Trunk( \mathcal{T}_{n}, \Vc ))} \rightarrow 0$.
\end{enumerate}
It also possible to obtain a local limit theorem for the height $| \Vc|$ of a uniformly chosen vertex in $ \mathcal{T}_{n}$  in the spirit of \eqref{eq:llheight}, namely:
 \[\P \left( |\Vc|=\lfloor t  \frac{n}{B_{n}}\rfloor\right)  \quad \mathop{\sim}_{n \rightarrow \infty} \quad  \frac{B_{n}}{n} \Es{X_{t}^{(\alpha-1)}p^{(\alpha)} \left( -X_{t}^{(\alpha-1)} \right)}\]
 uniformly for $t$ in compact subsets of $\R_{+}^{\ast}$, where $X_{t}^{(\alpha-1)}$ is an $(\alpha-1)$-stable random variable with Laplace exponent $\Es{e^{-\lambda X_{t}^{(\alpha-1)}}}=e^{\alpha t \lambda^{\alpha-1}}$ and $p^{(\alpha)}$ is the density of a random variable $Y$ with Laplace transform $\Es{e^{-\lambda Y}}=e^{\lambda^{\alpha}}$ (observe that for $\alpha=2$, we indeed have $X_{t}^{(\alpha-1)}p^{(\alpha)} \left( -X_{t}^{(\alpha-1)} \right)=2te^{-t^{2}}$ almost surely).
We leave the details to the reader.
\end{remark}

\section{Looptrees: the Gaussian domain of attraction}
\label{sec:Th2}

We now study the asymptotic behavior of looptrees associated with large  BGW trees whose offspring distribution has mean 1 and belongs to the domain of attraction of a Gaussian distribution. In particular, we establish \cref{thm:CRT}.

If $\tau$ is a plane tree, recall from Section \ref{sec:Looptreesdef} the definition of $\Loop( \tau)$ (and see Figure \ref{fig:loopintro} for an example). Recall also from \cref{sssec:coding} that $(\W_{t}(\tau) : t \geq 0)$, $(\C_{t}(\tau) : t \geq 0)$ and $(\H_{t}(\tau) : t \geq 0)$ denote respectively the {\L}ukasiewicz path, contour and height function of $\tau$. Starting from now, we denote by $\dc_{\tau}$ the graph metric on ${\Loop}( \tau)$. If  $\varnothing=u_{0}, u_{1}, \ldots, u_{|\tau|-1}$ denote the vertices of $\tau$ listed in lexicographical order, we set \begin{equation}\label{eqn:DefHrond}
	\Hc_{i}(\tau)=\dc_{\tau}(\varnothing,u_{i}), \quad  0 \leq i < |\tau|,
\end{equation} and $\Hc_{i}(\tau)=0$ for $i\geq |\tau|$. We again extend $\Hc(\tau)$ to $\R_{+}$ by linear interpolation. 

Throughout this section, we fix a critical offspring distribution $\mu$ belonging to the domain of attraction of a Gaussian distribution, and denote by $\Tn$ a $\BGW_{\mu}$ tree conditioned on having $n$ vertices. The main step in the proof of \cref{thm:CRT} is a functional invariance principle for $\Hc(\Tn)$. At some point we shall treat the finite variance and infinite variance cases separately, since in the first case $\Hc(\Tn)$ will be of the same order as $\H(\Tn)$, while in the second case $\Hc(\Tn)$ will be of the same order as $\W(\Tn)$.

\subsection{Towards the proof of \cref{thm:CRT}.} The goal of this section is to establish \cref{thm:CRT}. We now assume that $\mu$ is a critical offspring distribution with finite positive variance $\sigma_\mu^2$. Finally, we recall from \eqref{eqn:CstSigma} the definition of $c_\mu$.

\begin{proposition}
\label{prop:cvjointe}
The convergence
\begin{equation}
\label{eq:cvjointe} \bigg( \frac{B_{n}}{n} \C_{2nt}(\Tn), \frac{B_{n}}{n} \H_{nt}(\Tn),\frac{1}{B_{n}} \Hc_{nt}(\Tn) \bigg)_{0 \leq  t \leq 1}  \quad \xrightarrow[n\to\infty]{}  \quad  \sqrt{2} \cdot \bigg( \mathbbm{e}_{t},\mathbbm{e}_{t},  c_{\mu} \cdot \mathbbm{e}_{t}\bigg)_{0 \leq t \leq 1}
\end{equation}
holds jointly in distribution in the space $\mathcal{C}([0,1])^{3}$, where $\mathcal{C}([0,1])$ is the space of continuous real-valued functions on $[0,1]$ equipped with the uniform norm. 
\end{proposition}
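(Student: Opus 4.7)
The joint convergence of the first two components, $((B_n/n)\C_{2nt}(\Tn), (B_n/n)\H_{nt}(\Tn)) \to \sqrt{2}(\mathbbm{e}_t, \mathbbm{e}_t)$ in $\mathcal{C}([0,1])^2$, is the classical invariance principle for conditioned BGW trees whose offspring distribution lies in the Gaussian domain of attraction, which I shall take as a black box. The task is therefore to enlarge this joint convergence by the third coordinate $(1/B_n)\Hc_{nt}(\Tn) \to c_\mu \sqrt{2} \mathbbm{e}_t$.

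The starting identity is obtained by noting that in $\Loop(\tau)$ the sub-looptree rooted at any ancestor $v_j$ of $u_i$ is connected to the remainder only through $v_j$ itself, so the shortest path from the root to $u_i$ must visit each ancestor and, within each loop at $v_{j-1}$, take the shorter of the two arcs. Writing the ancestral path of $u_i$ as $\varnothing = v_0, v_1, \dots, v_{\H_i(\tau)} = u_i$ and $p_j$ for the position of $v_j$ among the children of $v_{j-1}$, this gives
\[
\Hc_i(\tau) = \sum_{j=1}^{\H_i(\tau)} \min\bigl(p_j,\, k_{v_{j-1}}(\tau) + 1 - p_j\bigr).
\]
Following the dichotomy announced in the introduction of this section, I would then split the analysis according to the finiteness of $\sigma_\mu^2$.

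In the finite variance case I would invoke the spinal decomposition of \cref{thm:trunk2} to replace, up to vanishing total variation, the ancestral trunk of $u_{\lfloor nt \rfloor}$ by a size-biased trunk $\Trunk^{\ast}_{\lfloor tn/B_n \rfloor}$. Along $\Trunk^{\ast}$ the summands in the identity above are i.i.d.\ with finite mean $\kappa := \Es{\min(U, K+1-U)}$ (where $K \sim \mu^{\ast}$ and, conditionally on $K$, $U$ is uniform in $\{1, \dots, K\}$), so a weak law of large numbers gives $\Hc_{\lfloor nt\rfloor}(\Tn) = \kappa \cdot \H_{\lfloor nt\rfloor}(\Tn) + o(B_n)$ in probability. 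Dividing by $B_n$ and using both $(B_n/n)\H_{\lfloor nt\rfloor} \to \sqrt{2}\mathbbm{e}_t$ and $n/B_n^2 \to 2/\sigma_\mu^2$ produces the stated limit after an explicit identification of the resulting constant. In the infinite variance case the per-ancestor mean $\kappa$ is infinite and the approach instead compares $\Hc$ directly to the \L{}ukasiewicz path: for a loop of large size $k_{v_{j-1}}$ the summand behaves as $k_{v_{j-1}} \cdot \min(U_j, 1-U_j)$ with $U_j$ asymptotically uniform in $(0,1)$ and $\Es{\min(U, 1-U)} = 1/4$, and summing along the spine while exploiting the left--right symmetry of the uniform positions yields the approximation $\Hc_{\lfloor nt\rfloor}(\Tn) \sim \tfrac{1}{2}\W_{\lfloor nt\rfloor}(\Tn)$. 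Combined with the invariance principle $(1/B_n)\W_{\lfloor nt\rfloor} \to \sqrt{2}\mathbbm{e}_t$ (which in this regime holds jointly with the height and contour convergences, since $n/B_n^2 \to 0$ so distances on the spine are negligible compared to loop sizes), this identifies the limit with $c_\mu = 1/2$.

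The main obstacle, and the one I expect to require the most care, is upgrading these pointwise-in-$t$ statements to functional convergence in $\mathcal{C}([0,1])$ jointly with the first two components. Concretely, I plan to establish tightness of $(\Hc_{\lfloor n\cdot\rfloor}(\Tn)/B_n)_n$ by proving a uniform approximation of the form $\Hc_{\lfloor nt\rfloor}/B_n = \kappa \H_{\lfloor nt\rfloor}/B_n + o(1)$ (finite variance) or $\Hc_{\lfloor nt\rfloor}/B_n = \W_{\lfloor nt\rfloor}/(2B_n) + o(1)$ (infinite variance) with error uniform in $t \in [0,1]$. This uniform control is the delicate point: in the finite variance case it asks for the LLN along the trunk to hold uniformly in the starting point, while in the infinite variance case it requires a careful treatment of the heavy-tailed loop contributions so that a handful of large loops cannot spoil the approximation. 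Once this uniform tightness is secured, the joint convergence with $((B_n/n)\C, (B_n/n)\H)$ is immediate, since all three limiting processes are deterministic multiples of one and the same Brownian excursion $\mathbbm{e}$.
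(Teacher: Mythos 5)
Your identification of the one‑point behaviour is essentially the paper's: the exact identity $\Hc_i(\tau)=\sum_{j}\min\bigl(p_j,\,k_{v_{j-1}}(\tau)+1-p_j\bigr)$, the transfer to the size‑biased trunk via \cref{thm:trunk2}, and the law of large numbers along the spine with the finite/infinite variance dichotomy are exactly the content of \cref{lem:SpinalDecomposition}. But the step you yourself flag as ``the main obstacle'' --- upgrading this to a \emph{uniform in $t$} approximation $\Hc_{\lfloor nt\rfloor}(\Tn)=\kappa\,\H_{\lfloor nt\rfloor}(\Tn)+o(B_n)$ --- is a genuine gap, not merely a delicate point. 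The spinal decomposition controls the trunk of a \emph{single} uniformly chosen vertex in total variation; it gives no simultaneous control over all $n$ starting points, and a union bound would require quantitative error rates that a vanishing total‑variation distance does not provide. The paper never proves such a uniform approximation. Instead it decouples the two tasks (\cref{lem:tightfd}): tightness of $(\Hc_{nt}(\Tn)/B_n)_{t}$ is obtained from the purely deterministic bound $|\Hc_i(\tau)-\Hc_j(\tau)|\le \W_j(\tau)-\W_i(\tau)+\H_j(\tau)-\H_i(\tau)$ valid when $u_i$ is an ancestor of $u_j$ (\cref{lem:particulierementutile}), the general case being reduced to the ancestor case by passing to the mirror tree; tightness of $\Hc/B_n$ then follows from that of $\W/B_n$ and of $\H B_n/n$ given by \cref{prop:cve}. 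The limit is identified only at a single uniform time $U^n$, and one concludes because, after Skorokhod representation and extraction, any subsequential limit $Z$ of $\Hc_{n\cdot}/B_n$ must agree with $c_\mu\sqrt2\,\mathbbm{e}$ at a dense i.i.d.\ sequence of uniform times, hence everywhere by continuity. You should restructure your argument along these lines; as planned it stalls at the uniform law of large numbers.

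A second point is the constant, precisely where you write ``after an explicit identification of the resulting constant.'' In the finite‑variance case you normalize the spine sum by the \emph{height}, so converting to the scaling of the proposition introduces the factor $n/B_n^2\to 2/\sigma_\mu^2$, and the multiple of $\sqrt2\,\mathbbm{e}$ you actually obtain is $2\kappa/\sigma_\mu^2$ with $\kappa=\Es{\min(U,K+1-U)}=\tfrac14(\sigma_\mu^2+4-\mu(2\Z+))$, not $\kappa$ itself. The paper instead normalizes by the {\L}ukasiewicz path $\W_{U^n}(\Tn)$ (the number of children branching to the right of the spine), which is of order $B_n$ in both regimes and yields the unified statement of \cref{lem:tightfd}~(ii). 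Carry the bookkeeping out explicitly and test it on $\mu(0)=\mu(2)=1/2$, for which $\Hc=\H$ exactly; this is the one place where your route and the paper's must be reconciled before the proof can be considered complete.
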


To this end, our main input will be the following result.
\begin{proposition}[\cite{Du03}]
\label{prop:cve}
The convergence
\[\bigg( \frac{1}{B_{n}} \W_{nt}(\Tn), \frac{B_{n}}{{n}} \H_{nt}(\Tn) , \frac{B_{n}}{{n}} \C_{2nt}(\Tn) \bigg)_{0 \leq  t \leq 1}  \quad \xrightarrow[n\to\infty]{}  \quad  \sqrt{2}\cdot(\mathbbm{e}_t,\mathbbm{e}_t,\mathbbm{e}_t)_{0 \leq t \leq 1}
\]
holds in distribution in the space $\mathcal{C}([0,1])^{3}$, where $(\mathbbm{e}_t : 0\leq t \leq 1)$ is the normalized Brownian excursion.
\end{proposition}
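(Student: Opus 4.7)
The plan is to build on Proposition~\ref{prop:cve} (Duquesne's joint invariance principle for $(\W,\H,\C)$) and adjoin $(1/B_n)\Hc_{n\,\cdot\,}(\Tn)$ as a fourth coordinate. The starting point is the elementary formula
\[
\Hc_i(\tau) \;=\; \sum_{j=0}^{|u_i|-1}\min\!\bigl(p_j,\,k_{v_j}(\tau)+1-p_j\bigr),
\]
where $v_0=\varnothing,v_1,\ldots,v_{|u_i|}=u_i$ is the ancestor path of $u_i$ and $p_j\in\{1,\ldots,k_{v_j}(\tau)\}$ is the position of $v_{j+1}$ among the children of $v_j$: this simply records that the cycle at $v_j$ in $\Loop(\tau)$ has length $k_{v_j}(\tau)+1$ and the shortest path in it from $v_j$ to $v_{j+1}$ has length $\min(p_j,k_{v_j}+1-p_j)$. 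Writing $L_j=p_j-1$ and $R_j=k_{v_j}(\tau)-p_j$ for the numbers of left and right siblings of $v_{j+1}$, the formula rearranges as $\Hc_i(\tau)=|u_i|+\sum_{j=0}^{|u_i|-1}\min(L_j,R_j)$, cleanly isolating the ``cycle overhead'' from the height contribution and allowing the problem to be reduced to the asymptotics of the sum along the spine.

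The core of the proof is to identify the scaling limit of this cycle overhead on the spine of $u_{nt}(\Tn)$. The natural tool is the spine decomposition of Theorem~\ref{thm:trunk2}: the trunk of a typical vertex of $\Tc_n$ is close in total variation to the size-biased trunk $\Trunk^*$, whose spine vertices carry i.i.d.\ $\mu^*$-distributed degrees and whose next-spine positions are uniform. Although Theorem~\ref{thm:trunk2} is stated for a uniform vertex $\Vcn$, the size-biasing identity of Proposition~\ref{prop:bias} extends to the fixed-time vertex $u_{nt}(\Tn)$ in a transparent way, giving the corresponding $\Trunk^*$-approximation with random height of order $n/B_n$ inherited via Proposition~\ref{prop:cve} from the limit $(B_n/n)\H_{nt}(\Tn)\to\sqrt{2}\,\mathbbm{e}_t$. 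A direct computation of the per-ancestor contribution $\E_{\mu^*\otimes\mathrm{Unif}}[\min(p,k+1-p)]$ produces the constant $c_\mu$ from \eqref{eqn:CstSigma}, and a second-moment estimate of the same quantity controls fluctuations. Combining this law of large numbers along the spine with the joint convergence of Proposition~\ref{prop:cve} yields the claimed convergence at each fixed $t$, jointly with the first two coordinates.

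To lift this finite-dimensional joint convergence to convergence in $\mathcal{C}([0,1])^3$, the plan is to prove tightness of $t\mapsto(1/B_n)\Hc_{nt}(\Tn)$ via a Kolmogorov--Chentsov moment estimate. The increment $\Hc_{ns}-\Hc_{nt}$ is controlled by the symmetric difference of the ancestor paths of $u_{ns}$ and $u_{nt}$, hence bounded in terms of $|\H_{ns}-\H_{nt}|$ together with a sum of $\min$-contributions along the non-shared portions of the two spines; moment bounds for the height process already underlie the proof of Proposition~\ref{prop:cve}, so tightness for $\Hc$ reduces to second-moment control of these extra $\min$-terms. The hardest technical point in this strategy is precisely this last step: obtaining such second-moment estimates uniformly in $s,t\in[0,1]$ requires a refinement of the $\Trunk^*$-approximation going beyond the marginal total-variation statement of Theorem~\ref{thm:trunk2}, which is what makes the argument substantially more delicate than in the exponential-moment setting of \cite{CHK15}.
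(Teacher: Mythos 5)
Your proposal does not prove the stated result. Proposition \ref{prop:cve} is the joint invariance principle for the three classical coding paths of $\Tn$ — the {\L}ukasiewicz path $\W$, the height function $\H$ and the contour function $\C$ — converging to $\sqrt{2}\,(\mathbbm{e},\mathbbm{e},\mathbbm{e})$. Your very first sentence takes this proposition as an input (``build on Proposition~\ref{prop:cve} \dots and adjoin $(1/B_n)\Hc_{n\cdot}(\Tn)$ as a fourth coordinate''), and everything that follows — the cycle-overhead decomposition of $\Hc$, the spinal law of large numbers producing $c_\mu$, the tightness of $\Hc$ — is a sketch of the proof of the \emph{later} statement, Proposition \ref{prop:cvjointe} together with Lemma \ref{lem:tightfd}, which concerns the looptree height $\Hc$. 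As an argument for Proposition \ref{prop:cve} itself this is circular: the target appears as a hypothesis and is never established.

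For the record, the paper does not prove Proposition \ref{prop:cve} either; it is quoted from Duquesne's work (with the exponential-moment case due to Marckert--Mokkadem and an alternative proof in the cited survey of Kortchemski). A genuine proof would run along entirely different lines from what you wrote: first, the {\L}ukasiewicz path of $\Tn$ is a left-continuous random walk with step law $\mu(\cdot+1)$ conditioned to first reach $-1$ at time $n$, and a conditioned Donsker-type theorem (e.g.\ via the Vervaat transform and the cyclic lemma) gives $\tfrac{1}{B_n}\W_{nt}(\Tn)\to\sqrt{2}\,\mathbbm{e}_t$; second, and this is the delicate step without exponential moments, one controls the height process through the identity $\H_n(\tau)=\#\{0\le j<n:\ \W_j(\tau)=\min_{j\le l\le n}\W_l(\tau)\}$ and shows joint convergence of $\tfrac{B_n}{n}\H_{nt}$ with the rescaled {\L}ukasiewicz path toward the same excursion; third, the contour function is compared to the height function through the time change $i\mapsto 2i-\H_i(\tau)$. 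None of these ingredients appears in your proposal. If your intended target was Proposition \ref{prop:cvjointe}, your outline is broadly in the spirit of the paper's argument (spinal decomposition identifying $c_\mu$, plus tightness of $\Hc$), but that is a different statement.
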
 The normalized Brownian excursion may be seen as Brownian motion conditioned to return to $0$ at time $1$ and to stay positive on $(0,1)$, see \cite[Section 2]{LG05}. This result was established in \cite{MM03} when $\mu$ has small exponential moments, and in \cite{Du03} in the general case (see also \cite{Kor13}). In view of future use, we record the following simple consequence of Proposition~\ref{prop:cve}:
\begin{equation}\label{eqn:DegreMax}
	\frac{1}{B_{n}}\sup_{v \in \Tc_n}{k_v(\Tc_n)} \quad \xrightarrow[n \rightarrow \infty]{} \quad 0, \quad \text{in probability}.
\end{equation}
Indeed, the maximum out-degree of $\Tn$ is the largest jump of $\W(\Tn)$ (plus one). In addition,  \cref{prop:cve}  entails that the convergence $\big(\tfrac{1}{B_{n}} \W_{\lfloor nt \rfloor }(\Tn) : 0 \leq t \leq 1 \big) \rightarrow  \sqrt{2} \cdot (\mathbbm{e}_t : 0\leq t \leq 1)$ holds  in distribution in the space  $\mathcal{D}([0,1])$ of real-valued càdlàg functions on $[0,1]$ equipped with the Skorokhod $J_{1}$ topology. The claim follows from the continuity of the functional ``largest jump'' for the Skorokhod $J_{1}$ topology (see e.g.\ \cite[Proposition 2.4 in Chapter VI]{JS03}).

To prove  \cref{prop:cvjointe}, we establish the following result which, roughly speaking, shows tightness and identifies the finite dimensional marginals. 

\begin{lemma}
\label{lem:tightfd}
For every $n \geq 1$, let $\Tn$ be a $\BGW_{\mu}$ tree conditioned on having $n$ vertices. Then, the following assertions hold.
\begin{enumerate}
\item The sequence $\big(\tfrac{1}{B_{n}}\Hc_{nt} ( \Tn): 0 \leq t \leq 1\big)$ is tight in $ \mathcal{C}([0,1])$.
\item For every $n \geq 1$, let $U^{n}$ be a random variable uniformly distributed on $ \{0,1, \ldots,n-1\}$, independent of $ \mathcal{T}_{n}$. Then,
\[\frac{1}{B_{n}} \big| \Hc_{U^{n}}( \Tn)- c_{\mu} \W_{U^{n}}( \Tn) \big|  \quad \xrightarrow[n\to\infty]{}  \quad  0 \qquad \text{in probability}.\]
\end{enumerate}
\end{lemma}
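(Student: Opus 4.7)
I attack part~(ii) first: it contains the conceptual content and explains how the constant~$c_\mu$ from~\eqref{eqn:CstSigma} arises. Part~(i) is then a technical tightness argument that feeds on the already-tight processes of~\cref{prop:cve}.

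\paragraph*{Part (ii) --- spinal decomposition.}
By~\cref{thm:trunk2}(ii) it suffices to prove the analogue of~(ii) when $\Trunk(\Tn, u_{U^n})$ is replaced by $\Trunk^{\ast}_{h_n}$, where $h_n := \lfloor \mathcal R\, n/B_n\rfloor$, the total-variation error being negligible. On the size-biased trunk the spine degrees $K_j$ are i.i.d.\ of law $\mu^{\ast}$, and conditionally on $K_j$ the position $P_j$ of $v^{\ast}_{j+1}$ among the children of $v^{\ast}_j$ is uniform on $\{1,\dots,K_j\}$. Both quantities of interest are measurable with respect to this spinal data:
\[
\W_{U^n}(\Tn) = \sum_{j=0}^{h_n-1}(K_j - P_j), \qquad \Hc_{U^n}(\Tn) = \sum_{j=0}^{h_n-1}\min(P_j,\,K_j - P_j + 1),
\]
so that $\Hc_{U^n} - c_\mu \W_{U^n}$ is a sum of $h_n$ i.i.d.\ random variables $\xi_j := \min(P_j, K_j - P_j + 1) - c_\mu(K_j - P_j)$. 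A parity-based computation on $K_j$ identifies $c_\mu$ from~\eqref{eqn:CstSigma} as the unique value ensuring $\Es{\xi_0}=0$. When $\sigma_\mu^{2}<\infty$ the variable $\xi_0$ has finite variance and $h_n$ is of order $n/B_n$, so Chebyshev's inequality yields $\sum_{j<h_n}\xi_j = o_\P(B_n)$. When $\sigma_\mu^{2}=\infty$ the summands no longer have integrable modulus, but the choice $c_\mu=1/2$ is exactly what cancels the leading behaviour $\min(P,K-P+1)\sim K/4$ against $\tfrac12(K-P)\sim K/4$ for large $K$; the residual contribution is handled by a single-big-jump analysis for $\mu^{\ast}$, combined with $h_n/B_n\to 0$ in this regime.

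\paragraph*{Part (i) --- tightness and main obstacle.}
I would apply Aldous's criterion in $\mathcal C([0,1])$: it is enough to show, for every $\varepsilon,\eta>0$, the existence of $\delta>0$ such that
\[
\Pr{\sup_{|s-t|<\delta}\bigl|\Hc_{ns}(\Tn)-\Hc_{nt}(\Tn)\bigr| > \varepsilon B_n} < \eta
\]
uniformly in large~$n$. The strategy is to compare $|\Hc_i - \Hc_j|$ with increments of already-tight processes from~\cref{prop:cve}: going from $u_i$ to $u_j$ in the looptree requires traversing the loops of the ancestors of $u_i$ and $u_j$ lying strictly below their common ancestor~$w$, and each such loop contributes at most half of its size, so the total cost is dominated by a sum of degrees along these two branches, a quantity controlled by the variation of $\W(\Tn)$ on $[ns,nt]$ plus a single-loop term that is negligible thanks to~\eqref{eqn:DegreMax}. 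The main difficulty is that the clean inequality $\dc_\tau(u,v)\le \Hc_u+\Hc_v-2\Hc_w$ overestimates the looptree distance because of shortcuts through each loop, so the partially-traversed loop at~$w$ must be handled with care; a Kolmogorov--Chentsov moment bound combined with a uniform-in-$i$ use of the spinal structure of~(ii) should close the finite-variance case, while the infinite variance case requires replacing subgaussian control by big-jump arguments, which is where the bookkeeping is most delicate.
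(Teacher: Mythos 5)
Your part (ii) follows the same skeleton as the paper --- reduce to the size-biased trunk via \cref{thm:trunk2} and exploit that $\Hc_{U^{n}}(\Tn)=\sum_{j}\min(P_{j},K_{j}-P_{j}+1)$ and $\W_{U^{n}}(\Tn)=\sum_{j}(K_{j}-P_{j})$ are i.i.d.\ sums along the spine --- but three steps do not hold up. (a) The Chebyshev step needs $\Es{\xi_{0}^{2}}<\infty$, and $\Es{\xi_0^2}$ is of order $\Es{K_{0}^{2}}=\sum_{k}k^{3}\mu(k)$, which finiteness of $\sigma_{\mu}^{2}$ does not give (take $\mu(k)\asymp k^{-4}$). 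The repair is cheap: since $h_{n}\asymp n/B_{n}\asymp B_{n}$ when $\sigma_\mu^2<\infty$, the weak law of large numbers (which only needs $\Es{|\xi_{0}|}<\infty$, i.e.\ $\sigma_\mu^2<\infty$) gives $\sum_{j<h_{n}}\xi_{j}=o_{\P}(h_{n})=o_{\P}(B_{n})$; the paper's equivalent move is to prove convergence of the \emph{ratio} $\dc_{\Tn}(\varnothing,\Vcn)/\W_{U^{n}}(\Tn)$ by two separate weak LLNs and then multiply by the tight $\W_{U^{n}}(\Tn)/B_{n}$ (\cref{lem:SpinalDecomposition}). (b) You assert without computing that the $c_{\mu}$ of \eqref{eqn:CstSigma} centers $\xi_{0}$. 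Carrying out the parity computation gives $\Es{\min(P_{0},K_{0}-P_{0}+1)}=\tfrac14(\sigma_\mu^2+4-\mu(2 \Z+))$ but $\Es{K_{0}-P_{0}}=\Es{(K_{0}-1)/2}=\sigma_{\mu}^{2}/2$, so the unique centering constant is $2c_{\mu}/\sigma_{\mu}^{2}$, not $c_{\mu}$ (sanity check with $\mu=\tfrac12(\delta_{0}+\delta_{2})$, for which $\Hc\equiv\H$). The same discrepancy appears in the paper's own proof of \cref{lem:SpinalDecomposition}, where $R(v^{\ast}_{n})$ is claimed to be a sum of i.i.d.\ $\mu$-variables although its increments have law $r\mapsto\mu([r+1,\infty))$; in any case your claim as written is not a computation and is false as stated. (c) In the infinite-variance case "single big jump" is the wrong mechanism: both $\sum_{j}\min(P_{j},K_{j}-P_{j}+1)$ and $\tfrac12\sum_{j}(K_{j}-P_{j})$ are of order $B_{n}$ and concentrated (relative LLN in the Cauchy domain of attraction), and what must be shown is that their deterministic equivalents agree to first order. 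The paper gets this from $\Pr{\min(L_{1},R_{1})\geq k}\sim L(k)/(2k)$ versus $\Pr{R_{1}\geq k}\sim L(k)/k$ together with Karamata's theorem for the truncated means; your sketch contains no trace of this cancellation.

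For part (i) you have misidentified the main obstacle. The partially traversed loop at the common ancestor costs at most $\sup_{v}k_{v}(\Tn)=o_{\P}(B_{n})$ by \eqref{eqn:DegreMax}; that is the easy part. The genuine difficulty is the branch from the common ancestor $u_{m}$ down to the lexicographically \emph{earlier} endpoint $u_{p}$: the usable bound for that piece (winding around each loop on its right side, \cref{lem:particulierementutile}) is $\W_{p}(\Tn)-\W_{p'}(\Tn)+\H_{p}(\Tn)-\H_{p'}(\Tn)$, and $p-p'$ can be macroscopic even when $q-p\leq\delta n$, so the modulus of continuity of $\W$ at scale $\delta n$ --- your "variation of $\W$ on $[ns,nt]$" --- controls only the later branch. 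The paper's resolution is the mirror-tree argument: in the mirrored tree the indices of $u_{p}$ and $u_{p'}$ differ by at most $|p-q|+|\H_{p}(\Tn)-\H_{p'}(\Tn)|$, so the ancestor bound applies there, and one concludes because the mirror tree has the same law. That deterministic device is the key missing idea; a Kolmogorov--Chentsov moment bound on increments of $\Hc$ is precisely the kind of estimate that is unavailable without stronger moment assumptions and that the paper is built to avoid.
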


Before proving this, let us explain why it implies \cref{prop:cvjointe}.

\begin{proof}[Proof of \cref{prop:cvjointe} using \cref{lem:tightfd}]
By \cref{prop:cve} and \cref{lem:tightfd}, up to extraction and using Skorokhod's representation theorem (see e.g.\ \cite[Theorem 6.7]{Bil99}), we may assume that the convergence 
\begin{equation}
\label{eq:Z}\bigg( \frac{1}{B_{n}} \W_{nt}(\Tn), \frac{B_{n}}{n} \C_{2nt}(\Tn), \frac{B_{n}}{n} \H_{nt}(\Tn),\frac{1}{B_{n}} \Hc_{nt}(\Tn) \bigg)_{0 \leq  t \leq 1}  \quad \xrightarrow[n\to\infty]{}  \quad  \sqrt{2} \cdot \bigg( \mathbbm{e}_{t},\mathbbm{e}_{t}, \mathbbm{e}_{t},Z_{t}\bigg)_{0 \leq t \leq 1}
\end{equation}
holds  almost surely in the space $\mathcal{C}([0,1])^{3}$ for a certain continuous random function $Z$, and we aim at showing that \[(Z_t : 0\leq t\leq 1) =  \left(  c_{\mu}\cdot \mathbbm{e}_t : 0\leq t\leq 1\right) \quad \text{almost surely}.\] For every $n \geq 1$, we  let $(U_{i}^{n} : i \geq 1)$ be an i.i.d.~sequence of uniform random variables on $ \{0,1, \ldots,n-1\}$, $(U_{i} : i \geq 1)$ an i.i.d.~sequence of uniform random variables on $[0,1]$, all independent of $(\Tc_n : n \geq 1)$. We may also assume that for every $i \geq 1$, the convergence $\tfrac{1}{n}U_{i}^{n} \rightarrow U_{i}$  holds almost surely as $n \rightarrow \infty$.

Now, let us fix $k \geq 1$. We claim that \[Z_{U_{k}}=  c_{\mu} \cdot \mathbbm{e}_{U_{k}}\quad \text{almost surely}.\] Indeed, by \cref{lem:tightfd} \textit{(ii)},
we may find an extraction $\phi$ such that 
\[ \frac{1}{B_{\phi(n)}} \Big| \Hc_{U_{k}^{\phi(n)}}( \Tc_{\phi(n)})- c_\mu \W_{U_{k}^{\phi(n)}}( \Tc_{\phi(n)}) \Big|  \quad \xrightarrow[n\to\infty]{}  \quad 0 \quad \text{almost surely}.\] But we also have the almost sure convergences
\[ \frac{1}{B_{\phi(n)}} \Hc_{U_{k}^{\phi(n)}}( \Tc_{\phi(n)})  \quad \xrightarrow[n\to\infty]{}  \quad   \sqrt{2} \cdot Z_{U_{k}} \quad \text{and} \quad   \frac{c_\mu}{\sqrt{\phi(n)}} \H_{U_{k}^{\phi(n)}}( \Tc_{\phi(n)})  \quad \xrightarrow[n\to\infty]{}  \quad \sqrt{2} c_{\mu} \cdot\mathbbm{e}_{U_{k}},\]
which entails our claim.

It follows that almost surely, the two continuous functions $(Z_t : 0\leq t \leq 1)$ and $ \left(c_{\mu} \cdot \mathbbm{e}_t : 0\leq t \leq 1\right)$  coincide on the set $ \{U_{i}: i \geq 1\}$ which is dense in $[0,1]$. This completes the proof.
\end{proof}

\subsection{Tightness.} The goal of this section is to establish the tightness statement \textit{(i)} of Lemma~\ref{lem:tightfd}. We  start  with a (deterministic) upper bound for the distance $\dc_{\tau}$.

\begin{lemma}
\label{lem:particulierementutile}
Let $\tau$ be a plane tree and denote by  $u_{0}, u_{1}, \ldots, u_{|\tau|-1}$ its vertices listed in lexicographical order. Then, for every $0 \leq i < j < |\tau|$, if $u_{i}$ is an ancestor of $u_{j}$ in $\tau$ we have
\[| \Hc_i(\tau)-\Hc_j(\tau)| \leq \W_{j}(\tau)-\W_{i}(\tau)+\H_{j}(\tau)-\H_{i}(\tau).\]
\end{lemma}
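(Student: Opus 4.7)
The plan is to reduce the bound to a path-length estimate in $\Loop(\tau)$. Since $\Hc_i(\tau) = \dc_\tau(\varnothing, u_i)$ and $\Hc_j(\tau) = \dc_\tau(\varnothing, u_j)$, the triangle inequality yields $|\Hc_i(\tau) - \Hc_j(\tau)| \leq \dc_\tau(u_i, u_j)$, so it suffices to exhibit a walk from $u_i$ to $u_j$ in $\Loop(\tau)$ of length at most $\W_j(\tau) - \W_i(\tau) + \H_j(\tau) - \H_i(\tau)$.

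Since $u_i$ is an ancestor of $u_j$, I would write the ancestral chain as $u_i = v_0, v_1, \ldots, v_m = u_j$, with $v_{k+1}$ a child of $v_k$ and $m = \H_j(\tau) - \H_i(\tau)$, and let $p_k \in \{1,\ldots,k_{v_k}(\tau)\}$ denote the position of $v_{k+1}$ among the children of $v_k$. By definition of $\Loop(\tau)$, the vertex $v_k$ together with its $k_{v_k}(\tau)$ children forms a cycle of length $k_{v_k}(\tau)+1$ in which $v_k$ is adjacent to the first and to the last child. The key idea is to traverse this cycle \emph{backwards} from $v_k$ (starting via the last child), reaching $v_{k+1}$ in exactly $k_{v_k}(\tau) - p_k + 1$ edges. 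Concatenating these walks for $k = 0, 1, \ldots, m-1$ yields
\[ \dc_\tau(u_i, u_j) \leq \sum_{k=0}^{m-1}\bigl(k_{v_k}(\tau) - p_k + 1\bigr) = m + \sum_{k=0}^{m-1}\bigl(k_{v_k}(\tau) - p_k\bigr). \]

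It remains to identify $\sum_{k=0}^{m-1}(k_{v_k}(\tau) - p_k)$ with $\W_j(\tau) - \W_i(\tau)$. For this I would invoke the standard identity
\[ \W_\ell(\tau) = \sum_{k=0}^{|u_\ell|-1}\bigl(k_{a_k}(\tau) - q_k\bigr), \]
valid for any $u_\ell \in \tau$, where $\varnothing = a_0, a_1, \ldots, a_{|u_\ell|} = u_\ell$ is the ancestral chain of $u_\ell$ and $q_k$ is the position of $a_{k+1}$ as a child of $a_k$ (i.e.\ $\W_\ell$ counts the right-siblings of $u_\ell$ and of each of its strict ancestors). This is immediate by induction on $\ell$ using the recursion $\W_{\ell+1}(\tau) - \W_\ell(\tau) = k_{u_\ell}(\tau) - 1$. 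Since $u_i$ is an ancestor of $u_j$, the first $|u_i|$ summands coincide in the decompositions of $\W_i(\tau)$ and $\W_j(\tau)$, and subtracting telescopes to exactly the desired sum.

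The main subtlety is the choice of direction through each loop: going \emph{forward} from $v_k$ would cost $p_k$ edges, which in general exceeds the allowed budget (e.g.\ when $v_{k+1}$ is the last child of $v_k$, where $p_k = k_{v_k}(\tau)$). Going backwards is exactly what pairs the contribution of the $k$-th loop to $\dc_\tau(u_i,u_j)$ with its contribution $k_{v_k}(\tau) - p_k$ to the increment of the {\L}ukasiewicz path, leaving only the extra $+1$ per loop that accounts for the $m$ vertical steps and produces the term $\H_j(\tau) - \H_i(\tau)$.
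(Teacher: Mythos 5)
Your proof is correct and follows essentially the same route as the paper's: both reduce the claim to exhibiting a path from $u_i$ to $u_j$ in $\Loop(\tau)$ that travels down the ancestral line $\llbracket u_i,u_j\llbracket$ and around each loop via the children grafted to the right, and both identify the resulting length $\sum_k(k_{v_k}(\tau)-p_k+1)$ with $\W_j(\tau)-\W_i(\tau)+\H_j(\tau)-\H_i(\tau)$ using the standard right-sibling interpretation of the {\L}ukasiewicz path. The only cosmetic difference is that you invoke the triangle inequality $|\Hc_i(\tau)-\Hc_j(\tau)|\leq \dc_\tau(u_i,u_j)$, whereas the paper observes that $u_i$ disconnects $u_j$ from the root in $\Loop(\tau)$, giving the (unneeded) equality $\Hc_j(\tau)-\Hc_i(\tau)=\dc_\tau(u_i,u_j)$.
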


\begin{proof}
To simplify, assume that $i \neq 0$ (the case $i=0$ is treated in the same way). Observe that $u_{i}$ disconnects $\Loop(\tau)$ into two connected components, one containing $u_{j}$ and the other containing $u_{0}$. We consider the first of these two components, which is actually $\Loop(\theta_{u_i}(\tau))$ (where we recall that $\theta_{u_i}(\tau)$ is the tree made of the descendants of $u_i$ in $\tau$). Since the shortest path from $u_{j}$ to $u_{i}$ in $\Loop(\tau)$ stays in $\Loop(\theta_{u_i}(\tau))$, the distance between $u_i$ and $u_j$ in $\Loop(\theta_{u_i}(\tau))$ is $\Hc_j(\tau)-\Hc_i(\tau)$. On the other hand, the number of vertices branching (weakly) to the right of the ancestral line $\llbracket u_{i},u_j \llbracket$ in $\theta_{u_i}(\tau)$ is $\W_{j}(\tau)-\W_{i}(\tau)+\H_{j}(\tau)-\H_{i}(\tau)$. Therefore  the path in $\Loop(\tau)$ which goes from $u_{j}$ to $u_{i}$ by only using the vertices of $\tau$ belonging to $\llbracket u_{i},u_{j} \llbracket$, and their children grafted on the right of  $\llbracket u_{i},u_{j} \llbracket$ 
 has length $\W_{j}(\tau)-\W_{i}(\tau)+\H_{j}(\tau)-\H_{i}(\tau)$ (see Figure \ref{fig:loopanc} for an illustration). This entails the desired result.
\end{proof}

 \begin{figure}[!h]
 \begin{center}
 \includegraphics[width= .4 \linewidth]{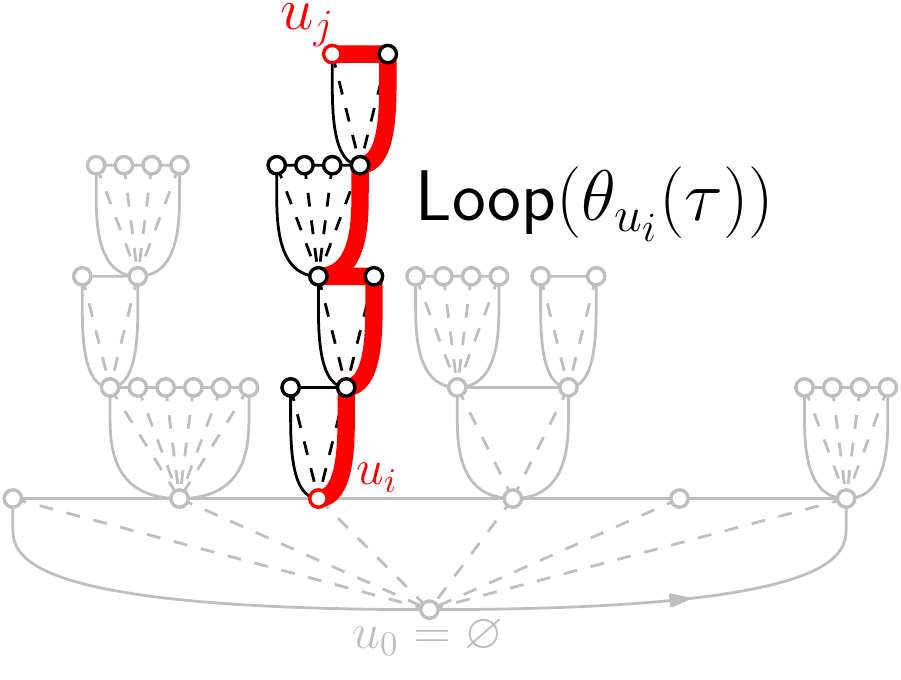}
 \caption{ \label{fig:loopanc}A plane tree $\tau$, the associated looptree $ \mathsf{Loop}( \theta_{u_{i}}(\tau))$ (in black) and the path of length $\W_{j}(\tau)-\W_{i}(\tau)+\H_{j}(\tau)-\H_{i}(\tau)$ between $u_j$ and $u_i$ (in bold).}
 \end{center}
 \end{figure}

We can now prove the tightness statement \textit{(i)} of Lemma \ref{lem:tightfd}.

\begin{proof}[Proof of \cref{lem:tightfd} (i)]
By a standard tightness criterion (see e.g.\ \cite[Theorem 8.10.5]{Dur10}) it suffices to check that for every $\varepsilon>0$,
\[  \limsup_{n \rightarrow \infty} \P\bigg( \sup_{ |i-j| \leq \delta n} |\Hc_{i}(\Tn)-\Hc_{j}(\Tn)|>\varepsilon B_{n} \bigg)  \quad \xrightarrow[\delta \rightarrow 0]{}  \quad 0.\]
Since $\tfrac{B_{n}}{n}\sup \H(\Tn)$ converges in distribution as $n \rightarrow \infty$ by Proposition \ref{prop:cve}, it is enough to check that
\[  \limsup_{n \rightarrow \infty} \P\bigg( \sup_{ |i-j| \leq \delta n} |\Hc_{i}(\Tn)-\Hc_{j}(\Tn)|>\varepsilon B_{n}, \quad  \sup \H(\Tn) \leq n^{3/4}  \bigg)  \quad \xrightarrow[\delta \rightarrow 0]{}  \quad 0.\]

To this end, we start with an identity inspired from  \cite[Equation (11)]{BM14}, see  the proof of Proposition 7 in \cite{Mar16} for a similar argument in a different context.  Let us introduce some notation. We fix $n \geq 1$, and let $\Ttc_n$ be the mirror image of the tree $\Tc_n$ (see Figure \ref{fig:Mirror}). We claim that  on the event $\{\sup \H(\Tn) \leq n^{3/4}\}$, for every $\delta>0$ and every $n$ sufficiently large,
\begin{align}
	&\sup_{ |p-q| \leq \delta n} |\Hc_{p}(\Tn)-\Hc_{q}(\Tn)| \notag\\ 
	& \qquad  \qquad \leq \sup_{ \substack{|i-j| \leq \delta n\\ u_{i} \prec u_{j}}} |\Hc_{i}(\Tn)-\Hc_{j}(\Tn)|+\sup_{ \substack{|i-j| \leq 2\delta n\\ \tilde{u}_{i} \prec \tilde{u}_{j}}} |\Hc_{i}(\Ttc_n)-\Hc_{j}(\Ttc_n)|+ \sup_{v \in \Tc_n}{k_v(\Tc_n)}, \label{eq:bornejuste}
\end{align}
where $\tilde{u}_0=\varnothing, \ldots, \tilde{u}_{n-1}$ denote the vertices of $\Ttc_n$ listed in  the lexicographical order.

To establish \eqref{eq:bornejuste}, we fix $p,q \in \{0, \ldots, n-1\}$ and assume, without loss of generality, that $p<q$. We denote by $\mathsf{m}(p,q)$ the index of the most recent common ancestor between $u_p$ and $u_q$ (in the lexicographical order of $\Tc_n$). We also let $p'$ and $q'$ be the indices of the children of $u_{\mathsf{m}(p,q)}$ that are ancestors of respectively $u_p$ and $u_q$. By the triangular inequality,
\[|\Hc_{p}(\Tn)-\Hc_{q}(\Tn)| \leq |\Hc_{p}(\Tn)-\Hc_{p'}(\Tn)|+|\Hc_{q}(\Tn)-\Hc_{q'}(\Tn)|+  \sup_{v \in \Tc_n}{k_v(\Tc_n)}.\]
One has now to be careful because $u_q$ and $u_{q'}$ are close in the lexicographical order of $\Tc_n$ (since $|q-q'|\leq |p-q|$, see  Figure \ref{fig:Mirror}), but $u_p$ and $u_{p'}$ may not be. However, the indices of their mirror images $\mathsf{I}(u_p)$ and $\mathsf{I}(u_{p'})$ are at distance at most $|p-q|+ |\H_{p}(\Tn)-\H_{p'}(\Tn)|$ in the lexicographical order of $\Ttc_n$ by construction (see Figure \ref{fig:Mirror} for an illustration). Hence, on the event $\{\sup \H(\Tn) \leq n^{3/4}\}$, for every $\delta>0$ and every $n$ sufficiently large,  if $|p-q| \leq \delta n$, then  the mirror images $\mathsf{I}(u_p)$ and $\mathsf{I}(u_{p'})$ are at distance less than $2 \delta n$  in the lexicographical order of $\Ttc_n$.  This entails \eqref{eq:bornejuste}. 

Since $\Ttc_{n}$ and $\Tn$ have the same distribution, by \eqref{eqn:DegreMax}, it  suffices to check that   
\begin{equation}
\label{eq:module}
\limsup_{n \rightarrow \infty} \P \left(  \sup_{ \substack{|i-j| \leq \delta n\\ u_{i} \prec u_{j}}} |\Hc_{i}(\Tn)-\Hc_{j}(\Tn)|>\varepsilon B_{n}\right)  \quad \xrightarrow[\delta \rightarrow 0]{}  \quad 0.
\end{equation}
But by  \cref{lem:particulierementutile}, if $u_{i}$ is an ancestor of $u_{j}$, we have
\[ |\Hc_{i}(\Tn)-\Hc_{j}(\Tn)|=\Hc_{j}(\Tn)-\Hc_{i}(\Tn) \leq \W_{j}(\Tn)-\W_{i}(\Tn)+ \H_{j}(\Tn)-\H_{i}(\Tn),\]
so that the probability in \eqref{eq:module} is bounded from above by 
\[ \P\bigg( \sup_{ |i-j| \leq \delta n} |\W_{i}(\Tn)-\W_{j}(\Tn)|> \frac{\varepsilon}{2} B_{n} \bigg)+\P\bigg( \sup_{ |i-j| \leq \delta n} |\H_{i}(\Tn)-\H_{j}(\Tn)|> \frac{\varepsilon}{2}  B_{n}\bigg).\]
By \cref{prop:cve}, $(\tfrac{1}{B_{n}} \W_{nt}(\Tn): 0 \leq t \leq 1)$ and $(\tfrac{1}{B_{n}} \H_{nt}(\Tn): 0 \leq t \leq 1)$ are both tight in  $\mathcal{C}([0,1])$ (when $\mu$ has infinite variance, we have $\frac{n}{B_{n}}= o( B_{n})$). The desired result then follows. \end{proof}

 \begin{figure}[!h]
 \begin{center}
 \includegraphics[scale=  1]{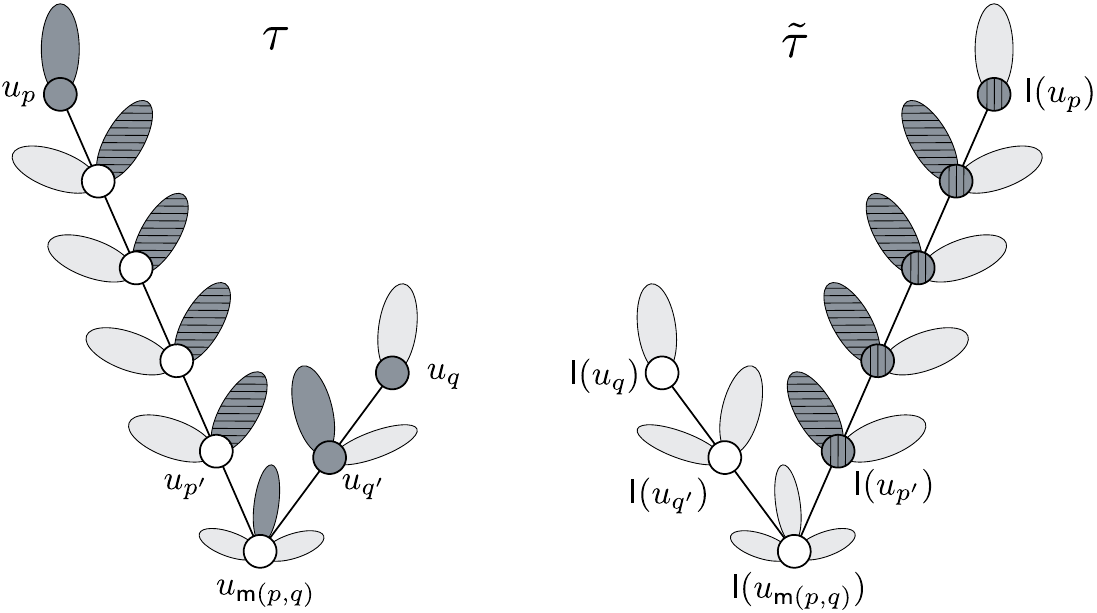}
 \caption{ \label{fig:Mirror}Left: a plane tree $\tau$, where the dark gray region encompasses the vertices that contribute to the lexicographical distance between $u_{p}$ and $u_{q}$ in $\tau$. Right: the mirror $ \tilde{\tau}$ of $\tau$, where the dark gray region encompasses this time the vertices that contribute to the lexicographical distance between $\mathsf{I}(u_{p'})$ and $\mathsf{I}(u_{p})$ in $\tilde{\tau}$.  The indices of $\mathsf{I}(u_{p'})$ and $\mathsf{I}(u_{p})$ in the lexicographical order of $\tilde{\tau}$ are at distance at most $|p-q|+ |\H_{p}(\Tn)-\H_{p'}(\Tn)|$. Indeed, the dark gray region on the right can be decomposed into two parts: the first one (horizontal hatches), which is a subset of the dark gray region on the left, yields to the contribution $|p-q|$, and the second one (vertical hatches) gives the contribution $|\H_{p}(\Tn)-\H_{p'}(\Tn)|$.}
 \end{center}
 \end{figure}

\subsection{Convergence of finite-dimensional distributions}
\label{ss:fd}

The goal of this section is to establish the convergence of finite dimensional marginals statement \textit{(ii)} of Lemma \ref{lem:tightfd}. In what follows, given a tree $\tau$ and a random variable $U$ uniformly distributed on $ \{0,1, \ldots,|\tau|-1\}$, we may interpret the $U$-th vertex of $\tau$ in lexicographical order as a vertex $\Vc$ chosen uniformly at random in $\tau$. Recalling the definition \eqref{eqn:DefHrond}, we then have 
\[\Hc_U(\tau)=\dc_\tau(\varnothing,\Vc).\] We shall use the following result.

\begin{lemma}
\label{lem:SpinalDecomposition} For every $n \geq 1$, let $\Tn$ be a $\BGW_{\mu}$ tree conditioned on having $n$ vertices. Conditionally given $\Tc_{n}$, let $\Vcn$ be a vertex of $\Tc_{n}$ chosen  uniformly at random.  Then
\[\frac{\dc_{\Tc_{n}}(\varnothing, \Vcn )}{R(\Vcn)}  \quad \xrightarrow[n \rightarrow \infty]{} \quad  c_\mu\] in probability, where  $R(\Vcn)$ is the number of children of vertices of $\llbracket \varnothing, \Vcn \llbracket$ branching on the right of $\llbracket \varnothing, \Vcn \llbracket$.
\end{lemma}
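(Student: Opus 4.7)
My strategy is to reduce the computation to the size-biased trunk via \cref{thm:trunk2}(ii). Both $\dc_{\Tc_n}(\varnothing,\Vc_n)$ and $R(\Vc_n)$ are deterministic functionals of $\Trunk(\Tc_n,\Vc_n)$, so the total-variation estimate of \cref{thm:trunk2}(ii) lets me replace the latter by $\Trunk^*_{H_n}$ with $H_n=\lfloor \mathcal{R}\,n/B_n\rfloor$ at the cost of a vanishing additive error in probability. On the size-biased trunk the spine has a much more transparent structure: the degrees $K_0,\dots,K_{h-1}$ of the spine vertices are i.i.d.\ with law $\mu^*$, and, conditionally on these degrees, the positions $I_0,\dots,I_{h-1}$ of the spine children are independent and uniform on $\{1,\dots,K_m\}$. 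Since the loop at $v^*_m$ is a cycle of perimeter $K_m+1$ and $v^*_m$ is a cut-vertex separating the $m$-th loop from the remainder of the looptree, the geodesic from $v^*_0$ to $v^*_h$ traverses each loop along its shorter arc, yielding the explicit decompositions
\[\dc=\sum_{m=0}^{h-1}\min\bigl(I_m,\,K_m+1-I_m\bigr),\qquad R=\sum_{m=0}^{h-1}(K_m-I_m).\]

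\textbf{Finite variance case.} When $\sigma_\mu^2<\infty$, both summands have finite expectation under $\mu^*$; the elementary identity
$\sum_{i=1}^{k}\min(i,k+1-i)=\tfrac{1}{4}\bigl(k^2+2k+\mathbf{1}_{k\text{ odd}}\bigr)$
expresses $\E_{\mu^*}[\min(I,K+1-I)]$ and $\E_{\mu^*}[K-I]$ in closed form in terms of $\sigma_\mu^2$ and $\mu(2\mathbb{Z}_+)$. As $H_n\to\infty$ in probability, the law of large numbers applied conditionally on $\mathcal{R}$ shows that $\dc/H_n$ and $R/H_n$ each converge in probability to the corresponding expectation, and a short algebraic manipulation identifies the ratio of the two limits as the constant $c_\mu$ appearing in \eqref{eqn:CstSigma}. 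Combined with the total-variation transfer this yields the conclusion in the finite variance regime.

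\textbf{Infinite variance case and main obstacle.} When $\sigma_\mu^2=\infty$, $\mu^*$ is no longer integrable and the LLN no longer applies; both sums are now dominated by the few spine vertices of exceptionally large degree. The argument then relies on the stable behaviour of $\mu^*$, whose right tail inherits a power-law from $\mu$, to describe the joint scaling behaviour of $\dc$ and $R$, the two sums being of the same order and their ratio concentrating around the deterministic constant $c_\mu=\tfrac12$. This infinite-variance regime is the most delicate step of the proof: because the LLN is unavailable, one must carry along the joint behaviour of numerator and denominator and verify that, despite their separate fluctuations, their quotient still converges in probability to a deterministic limit.
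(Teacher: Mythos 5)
Your reduction to the size-biased trunk via \cref{thm:trunk2}, the explicit decomposition of $\dc$ and $R$ into i.i.d.\ sums over the spine, and the law-of-large-numbers treatment of the finite-variance case all coincide with the paper's argument. The genuine gap is the case $\sigma_\mu^2=\infty$: you correctly flag it as the delicate step, but you give no argument for it, and the heuristic you offer points in the wrong direction. If the two sums really were ``dominated by the few spine vertices of exceptionally large degree'', their ratio would converge to a \emph{nondegenerate random} limit (governed by the uniform positions of the spine children inside those few dominant loops), not to the constant $\tfrac12$. What actually happens is a weak law of large numbers for variables in the Cauchy domain of attraction (relative stability): writing $R_i=K_i-I_i$ and $L_i=I_i-1$, Karamata's theorem gives $\P(R_1\ge k)\sim L(k)/k$ and $\P(\min(L_1,R_1)\ge k)\sim L(k)/(2k)$, so each sum, divided by its deterministic normalization $n\sum_{k\le a_n}L(k)/k$, resp.\ $n\sum_{k\le a_n/2}L(k)/(2k)$ (with $L(a_n)/a_n\sim 1/n$), converges to $1$ in probability, the maximal summand being negligible compared to the total. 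The limit $\tfrac12$ then comes from the purely deterministic fact that $x\mapsto\sum_{k\le x}L(k)/k$ is slowly varying, so the two normalizations differ asymptotically by a factor $2$. Without this (or an equivalent) input, your proof is incomplete precisely where the statement is hardest.

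A secondary point: in the finite-variance case the ``short algebraic manipulation'' is where the constant is actually produced, so it must be written out. The numerator's summands have mean $\E\left[\min(U_{X^{*}},X^{*}-U_{X^{*}}+1)\right]=\tfrac14(\sigma_\mu^2+4-\mu(2\Z+))$, which is indeed $c_\mu$; but the denominator's summands $K_m-I_m$ have law $\P(K-I=j)=\mu([j+1,\infty))$ (as the paper itself uses in the infinite-variance case), and you should compute their mean explicitly rather than assert that the quotient of the two limits ``identifies as $c_\mu$''. The identification of this ratio is the entire content of the lemma, and it is exactly the kind of step where an unchecked normalization slips through.
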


Before proving this result, let us explain why it implies \cref{lem:tightfd} \textit{(ii)}. 

\begin{proof}[Proof of \cref{lem:tightfd} (ii) using \cref{lem:SpinalDecomposition}] We interpret the $U^{n}$-th vertex of $\Tc_n$ in lexicographical order as a vertex $\Vcn$ chosen uniformly at random in $\Tc_n$.   If $u_{0}, \ldots,u_{n-1}$ are the vertices of $\Tc_{n}$ listed in lexicographical order, it is well known that for every $0 \leq i < n$, $\W_{i}(\Tc_{n})$ is equal to the number of children of vertices of $\llbracket \varnothing, u_{i} \llbracket$ branching on the right of $\llbracket \varnothing, u_{i} \llbracket$, so that $\dc_{\Tc_{n}}(\varnothing, \Vcn) =\Hc_{U^{n}}(\Tc_{n})$ and $R(\Vcn)=\W_{U^{n}}(\Tc_{n})$.

Then fix $\epsilon>0$ and write
\[ \left\{ \big| \Hc_{U^{n}}( \Tn)- c_{\mu} \W_{U^{n}}( \Tn) \big|  \geq \varepsilon B_{n} \right\} = \left\{ \left|\frac{\dc_{\Tc_{n}}(\varnothing, \Vcn )}{R(\Vcn)} - c_{\mu} \right|  \geq \epsilon \frac{B_{n}}{\W_{U^{n}}(\Tc_{n})}\right\}.\]
By \cref{prop:cve},  $ \frac{\W_{U^{n}}(\Tc_{n})}{B_{n}}$ converges in distribution to $\sqrt{2}\mathbbm{e}_{U}$ with $U$ uniform on $[0,1]$. By \cref{lem:SpinalDecomposition} it readily follows that $\P( \big| \Hc_{U^{n}}( \Tn)- c_{\mu} \W_{U^{n}}( \Tn) \big|  \geq \varepsilon B_{n}) \rightarrow 0$. This completes the proof.
\end{proof}

The key ingredient to prove \cref{lem:SpinalDecomposition} is the  spinal decomposition of \cref{sec:spinal}.

\begin{proof}[Proof of  \cref{lem:SpinalDecomposition}]
We separate the proof depending on the finiteness of $\sigma_{\mu}^{2}$, since when $\sigma_{\mu}^{2}<\infty$ the quantities $\dc_{\Tc_{n}}(\varnothing, \Vcn )$ and $R(\Vcn)$ are of the same order as $|\Vcn|$, unlike in the case $\sigma_{\mu}^{2}=\infty$.

Recall the notation of \cref{sec:spinal}. For every $n \geq 1$, denote by $\varnothing=v^{\ast}_{0},v^{\ast}_{1}, \ldots,v^{\ast}_{h}$ the vertices on the spine of $\Trunk^{\ast}_{h}$. Observe also that the quantities  $\dc_{\Tc_{n}}(\varnothing, \Vcn )$ and $R(\Vcn)$ only depend on $ \Trunk( \mathcal{T}_{n}, \Vcn )$. Therefore, by \cref{thm:trunk2}, it is enough to show that
\begin{equation}
\label{eq:lllgoal}\frac{\dc_{\Trunk^{\ast}_{n}}(\varnothing, v^{\ast}_{n} )}{R(v^{\ast}_{n})}  \quad \xrightarrow[n \rightarrow \infty]{} \quad  c_\mu
\end{equation} in probability, where  $R(v^{\ast}_{n})$ is the number of children of vertices of $\llbracket \varnothing, v^{\ast}_{n} \llbracket$ branching on the right of $\llbracket \varnothing, v^{\ast}_{n} \llbracket$.

\paragraph*{Case $\sigma_{\mu}^{2}<\infty$.} Observe that by construction, $\dc_{\Trunk^{\ast}_{n}}(\varnothing, v^{\ast}_{n} )$ is a sum of $n$ i.i.d.\ random variables whose distribution is that of $\min(U_{X^{\ast}},X^{\ast}-U_{X^{\ast}}+1)$, where $X^{\ast}$ has the size-biased law $\mu^{\ast}$ and conditionally on $X^{\ast}$, $U_{X^{\ast}}$ is uniform on $\{1,\ldots,X^{\ast}\}$. One sees that such a variable has mean $c_\mu$, so that by the law of large numbers, in probability,
\begin{equation}
\label{eq:lll1}
\frac{\dc_{\Trunk^{\ast}_{n}}(\varnothing, v^{\ast}_{n} )}{n} \  \quad \xrightarrow[n \rightarrow \infty]{} \quad  c_\mu.
\end{equation}
On the other hand, $R(v^{\ast}_{n})$ is the sum of $n$ i.i.d.~random variable with distribution $\mu$, 
so that by the law of large numbers once again,
\begin{equation}
\label{eq:lll2}
\frac{R(v^{\ast}_{n})}{n} \  \quad \xrightarrow[n \rightarrow \infty]{} \quad  1
\end{equation} in probability. The convergence \eqref{eq:lllgoal} then follows from \eqref{eq:lll1} and \eqref{eq:lll2}.

\paragraph*{Case $\sigma_{\mu}^{2}=\infty$.} For $1 \leq i \leq n$, denote by  $L_{i}$ (resp.~$R_{i}$) as the number of children of $v^{\ast}_{i}$ branching on the left (resp.~right) of $\llbracket \varnothing, v^{\ast}_{i}\llbracket$ (so that $\min(L_{i}+1,R_{i}+1)$ is $\dc_{\Trunk^{\ast}_{n}}(v^{\ast}_{i-1},v^*_{i})$). We shall establish that
\begin{equation}
\label{eq:cvinfini}\frac{\displaystyle \sum_{i=1}^{n} \min(L_{i}+1,R_{i}+1)}{ \displaystyle \sum_{i=1}^{n} R_{i}}  \quad \xrightarrow[n \rightarrow \infty]{} \quad  \frac{1}{2}
\end{equation}
which  is equivalent to \eqref{eq:lllgoal}.  To this end, observe that by construction $((L_{i},R_{i}) : i \geq 1)$ is a sequence of independent random variables with distribution given by $\P((L_{1},R_{1})=(i,j))=\mu(i+j+1)$ for $i,j \geq 0$.

We now recall some  results concerning random variables falling within the domain of attraction of a stable law of index $\alpha=1$ (see e.g.~\cite{Ber17}). Assume that $(Z_{i} : i \geq 1)$ are i.i.d.~integer valued random variables such that
\[\P(Z_{1} \geq k)= \frac{\ell(k)}{k}, \qquad k \geq 1,\]
where $\ell$ is a slowly varying function such that $\sum_{k \geq 1} \tfrac{\ell(k)}{k}=\infty$ (so that $\E(Z_{1})=\infty)$. Then the convergence
\[ \frac{Z_{1}+ \cdots+Z_{n}}{n  \cdot \sum_{k=1}^{a_{n}} \frac{\ell(k)}{k}}  \quad \mathop{\longrightarrow}_{n \rightarrow \infty} \quad 1\]
holds in probability, where $a_{n}$ satisfies $ \tfrac{\ell(a_{n})}{a_{n}} \sim \tfrac{1}{n}$ as $n \rightarrow \infty$.

Back to the proof of \eqref{eq:cvinfini}, observe that for every $k \geq 0$, $\P(R_{1}=k)=\mu([k+1,\infty))$ and $\P(\min(L_{1},R_{1})=k)=2\mu([2k+1,\infty))-\mu(2k+1)$. Therefore, by standard integration properties of slowly varying functions (see e.g.~\cite[Proposition 1.5.10]{BGT89})
\[\P(R_{1} \geq k)  \quad \mathop{\sim}_{k \rightarrow \infty} \quad  \frac{L(k)}{k}, \qquad \P(\min(L_{1},R_{1}) \geq k)  \quad \mathop{\sim}_{k \rightarrow \infty} \quad  \frac{L(k)}{2k}.\]
As a consequence if we choose $a_{n}$ so that $\tfrac{L(a_{n})}{a_{n}} \sim \frac{1}{n}$, by the previous paragraph the convergences
\[
\frac{ \displaystyle\sum_{i=1}^{n} R_{i}}{n  \cdot \displaystyle\sum_{k=1}^{a_{n}} \frac{L(k)}{k}}  \quad \mathop{\longrightarrow}_{n \rightarrow \infty} \quad 1 \qquad \text{and} \qquad \frac{ \displaystyle\sum_{i=1}^{n} \min(L_{i},R_{i})}{\displaystyle n  \cdot \sum_{k=1}^{a_{n}/2} \frac{L(k)}{2k}}  \quad \mathop{\longrightarrow}_{n \rightarrow \infty} \quad 1
\]
hold in probability.
Since $\sum_{k=1}^{a_{n}} \frac{L(k)}{k} \rightarrow \infty$ as $n \rightarrow \infty$, it follows that \[ \frac{\displaystyle \sum_{i=1}^{n} \min(L_{i},R_{i})}{\displaystyle \sum_{i=1}^{n} \min(L_{i}+1,R_{i}+1)}  \quad \mathop{\longrightarrow}_{n \rightarrow \infty} \quad 1 \qquad \text{in probability}.\] It therefore remains to check that
\[\sum_{k=1}^{a_{n}} \frac{L(k)}{k}   \quad \mathop{\sim}_{n \rightarrow \infty} \quad  2  \sum_{k=1}^{a_{n}/2} \frac{L(k)}{2k}.\]
But this simply follows from \cite[Proposition 1.5.9 a]{BGT89}, which ensures that the function $n \mapsto \sum_{k=1}^{n} \frac{L(k)}{k}$ is slowly varying at infinity. The proof is now complete.
 \end{proof}

\subsection{Convergence to a multiple of the CRT}
\label{ssec:finalproof}
We are finally in position to establish \cref{thm:CRT}. Before that, let us recall a basic fact about the Gromov--Hausdorff topology (see \cite[Theorem 7.3.25]{burago_course_2001}). If $(E_1,d_1)$ and $(E_2,d_2)$ are metric spaces, a \textit{correspondence} between $E_1$ and $E_2$ is a subset $\mathcal{R}$ of $E_1\times E_2$ such that for every $x_1 \in E_1$, there exists $x_2\in E_2$ such that $(x_1,x_2) \in \mathcal{R}$, and conversely. Now, the \textit{distorsion} $\textup{dis}(\mathcal{R})$ of the correspondence $\mathcal{R}$ is defined by
\[\textup{dis}(\mathcal{R})\coloneqq\sup\{|d_1(x_1,y_1)-d_2(x_2,y_2)| : (x_1,x_2), (y_1,y_2) \in \mathcal{R}\}.\] Then, the Gromov--Hausdorff distance between $(E_1,d_1)$ and $(E_2,d_2)$ is given by
\[d_{\textup{GH}}((E_1,d_1), (E_2,d_2))= \frac{1}{2} \inf_{\mathcal{R}}{\textup{dis}(\mathcal{R})},\] where the supremum is over all correspondences between $(E_1,d_1)$ and $(E_2,d_2)$.

This section involves Aldous' Brownian CRT \cite{Ald93}, whose construction we now recall.  Recall that $\mathbbm{e}$ is the normalized Brownian excursion and introduce a pseudo-distance on $[0,1]$ by setting
\[d_\mathbbm{e}(s,t)=\mathbbm{e}_s+\mathbbm{e}_t-2\min_{s\wedge t \leq u \leq s\vee t}\mathbbm{e}_u, \quad s,t \in[0,1].\] We also let
\[s \approx t \quad \text{if and only if} \quad d_\mathbbm{e}(s,t)=0, \quad s,t \in [0,1].\] Then, the CRT is the quotient space $\mathcal{T}_{ \mathbbm{e}}\coloneqq[0,1] _{/ \approx}$, equipped with the distance $d_\mathbbm{e}$.

\begin{proof}[Proof of \cref{thm:CRT}] First of all, by Skorokhod's representation theorem, we may assume that the convergence of Proposition \ref{prop:cvjointe} holds almost surely, and we aim at proving that the convergence of Theorem \ref{thm:CRT} also holds in this sense.  Recall that $u_0, \ldots, u_{n-1}$ are the vertices of $\Tc_n$ listed in lexicographical order. We  let $\mathbf{p}_\mathbbm{e}$ be the canonical projection from $[0,1]$ onto $\Tc_\mathbbm{e}$, and build a correspondence $\mathcal{R}_n$ between $\frac{1}{B_{n}}\cdot \Loop(\Tn)$ and $ c_{\mu}\cdot \sqrt{2}\Tc_{\mathbbm{e}}$ as follows:
\[\mathcal{R}_n\coloneqq\{(u_i,\mathbf{p}_\mathbbm{e}(s))\in \Loop(\Tn) \times \Tc_\mathbbm{e} : \ i=\lfloor (n-1)s\rfloor, \ s\in[0,1], \ 0\leq i < n\}.\] Let us show that the distorsion of $\mathcal{R}_n$ vanishes as $n\rightarrow \infty$. We argue by contradiction and assume that there exists $\varepsilon>0$, sequences $(i_n : n\geq 1)$ and $(j_n : n\geq 1)$ with $i_n,j_n\in\{0,\ldots, n-1\}$ and $(s_n : n\geq 1)$ and $(t_n : n\geq 1)$ with $s_n,t_n \in [0,1]$ such that for every $n\geq 1$, $(u_{i_n},\mathbf{p}_\mathbbm{e}(s_n))\in \mathcal{R}_n$, $(u_{j_n},\mathbf{p}_\mathbbm{e}(t_n))\in \mathcal{R}_n$ and
\[\left| \frac{1}{B_{n}}\dc_{\Tn}(u_{i_n},u_{j_n}) -  c_{\mu}  \sqrt{2} d_{\mathbbm{e}}(s_n,t_n) \right|>\varepsilon.\]  By compactness, up to extraction, we may assume that $\frac{i_n}{n}\rightarrow s\in [0,1]$ and $\frac{j_n}{n}\rightarrow t\in [0,1]$ as $n\rightarrow \infty$, so that $s_n \rightarrow s$ and $t_n \rightarrow t$ as well. Without loss of generality, we can also assume that $s \leq t$. By the construction of $\Loop(\Tn)$, following \cite[Equation (4.3)]{CK14b} we have for every $n\geq 1$
\begin{equation}\label{eqn:DistLoop}
	\left| \dc_{\Tn}(u_{i_n},u_{j_n}) - \left(\Hc_{i_n}(\Tn)+\Hc_{j_n}(\Tn)-2\Hc_{\textsf{m}(i_{n},j_{n})}(\Tn) \right) \right| \leq k_{u_{\textsf{m}(i_{n},j_{n})}}(\Tc_n),
\end{equation}where $\textsf{m}(i_{n},j_{n})$ is the index of the most recent common ancestor of $u_{i_n}$ and $u_{j_n}$ in the lexicographical order of $\Tc_n$.

Now, recall that $x_0, \ldots, x_{2(n-1)}$ are the vertices of $\Tc_n$ listed in contour order. We follow the guidelines of \cite[Section 2]{Du03} to compare the lexicographical and contour orders of vertices in $\Tc_n$. We set
\[b_n(i)\coloneqq2i-\H_i(\Tn), \quad 0 \leq i <n,\] so that $b_n(i)$ is the index of the first visit of the vertex $u_i$ in the contour order of $\Tc_n$. As a consequence, we have
\begin{equation}\label{eqn:ContourAndLexico}
	u_i = x_{b_n(i)}, \quad 0 \leq i <n.
\end{equation} Moreover, since $\tfrac{B_{n}}{n}\H(\Tn)$ converges in virtue of \cref{prop:cvjointe}, the convergence \begin{equation}\label{eqn:GluingContourLexico}
	\sup_{0 \leq t \leq 1}{\left| \frac{1}{2n}b_n(\lfloor t(n-1) \rfloor) -t \right|}= \sup_{0 \leq t \leq 1}{\left| \frac{1}{2n}\left(2\lfloor t(n-1) \rfloor - \H_{2\lfloor t(n-1) \rfloor}(\Tn)\right) -t \right|} \underset{n \rightarrow \infty}{\longrightarrow} 0,
\end{equation}
holds almost surely. But the quantity $\Big| \tfrac{1}{B_{n}}\Hc_{\textsf{m}(i_{n},j_{n})}(\Tn) -  c_{\mu} \inf_{s\leq u \leq t}\sqrt{2} \mathbbm{e}_{u} \Big|$ is bounded from above by
\[ \sup_{0 \leq i<n}\Big|  \frac{1}{B_{n}} \Hc_{i}(\Tn) - c_{\mu} \frac{B_{n}}{ n }\H_{i}(\Tn) \Big| +  c_{\mu}\Big|  \frac{B_{n}}{n}\H_{\textsf{m}(i_{n},j_{n})}(\Tn) -   \inf_{s\leq u \leq t} \sqrt{2}\mathbbm{e}_{u} \Big|.\]
Since $u_{\textsf{m}(i_{n},j_{n})}$ is the most recent common ancestor between $u_{i_n}$ and $u_{j_n}$ in $\Tc_n$, using \eqref{eqn:ContourAndLexico}, we get 
\[\H_{\textsf{m}(i_{n},j_{n})}(\Tn)=\C_{b_n(\textsf{m}(i_{n},j_{n}))}(\Tn)=\inf_{b_n(i_n) \leq k \leq b_n(j_n)} \C_k(\Tn)=  \inf_{ \frac{b_{n}(i_{n})}{2n} \leq t \leq \frac{b_{n}(j_{n})}{2n}} \C_{2nt}(\Tn).\] From Proposition \ref{prop:cvjointe} and \eqref{eqn:GluingContourLexico} we deduce that 
\[\frac{1}{B_{n}} \Hc_{\textsf{m}(i_{n},j_{n})}(\Tn) \quad \xrightarrow[n \rightarrow \infty]{} \quad   c_{\mu} \cdot \inf_{s\leq u \leq t} \sqrt{2} \mathbbm{e}_{u}\quad \text{almost surely}.\] By Equation \eqref{eqn:DegreMax}, we get by passing to the limit into \eqref{eqn:DistLoop} that 
\[\frac{1}{B_{n}}\dc_{\Tn}(u_{i_n},u_{j_n}) \quad \xrightarrow[n \rightarrow \infty]{} \quad   c_{\mu} \cdot  \sqrt{2} d_{\mathbbm{e}}(s,t) \quad \text{almost surely},\] thus a contradiction, and the proof is complete. \end{proof}

\section{Applications to random planar maps}\label{sec:Maps}

\subsection{A modified looptree} In view of our applications to random planar maps, we need to extend Theorems \ref{thm:circle} and \ref{thm:CRT} to a modified version of the looptree $\Loop(\tau)$, that was first introduced in \cite{CK15} and whose definition we now recall. 

With every plane tree $ \tau$, we associate a planar map $\Loopb( \tau)$, that is obtained from $\Loop(\tau)$ by contracting the edges $(u,v)$ such that $v$ is the last child of $ u$ in lexicographical order in $\tau$ (see Figure \ref{fig:loopb} for an example). We still view $ \Loopb( \tau)$ as a compact metric space by endowing its vertices with the graph distance.  
 \begin{figure}[!h]
 \begin{center}
 \includegraphics[width=  \linewidth]{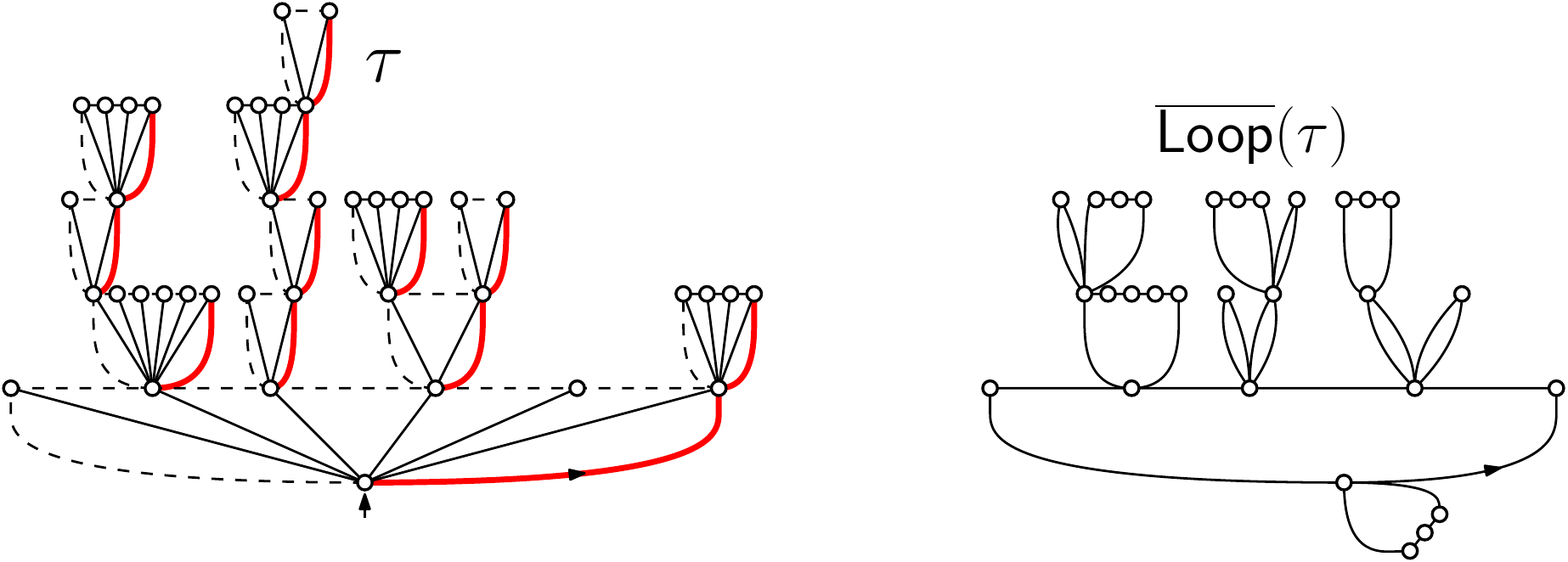}
 \caption{ \label{fig:loopb}A discrete tree $\tau$ (with $\Loop(\tau)$ in dashed edges) and its associated planar map $  \Loopb( \tau)$. The contracted edges are in bold red.}
 \end{center}
 \end{figure}
 
It is a simple matter to check that a result similar to Theorem \ref{thm:circle}  holds with $\Loop$ replaced by $\Loopb$, with almost the same proof. However, for Theorem \ref{thm:CRT}, distances are changed by a constant factor when replacing $\Loop$ by $\Loopb$. The proof of the following theorem goes along the same lines as that of Theorem \ref{thm:CRT}, and we leave details to the reader.

\begin{theorem} \label{thm:CRT2}
 Let $\mu$ be a critical offspring distribution with finite positive variance $\sigma_\mu^{2}$. For every $ n \geq 1$, let $ \Tc_{n}$ be a $ \BGW_{ \mu}$ tree conditioned on having $n$ vertices. Then the convergence
  \begin{eqnarray*}  \frac{1}{\sqrt{n}}\cdot \Loopb ( \Tc_{n}) & \xrightarrow[n\to\infty]{} &    { \frac{2}{ \sigma_\mu}}  \cdot   \frac{1}{4}\left( \sigma_\mu^2+ \mu(2 \Z+) \right) \cdot  \mathcal{T}_{ \mathbbm{e}} \end{eqnarray*}
  holds in distribution for the Gromov--Hausdorff topology.
  \end {theorem}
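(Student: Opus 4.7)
The plan is to follow the proof of \cref{thm:CRT} mutatis mutandis, the only substantive change being in the per-step contribution to looptree distance along the spine. Set $\overline{\mathsf{H}}_i(\tau) := \dc_{\Loopb(\tau)}(\varnothing, u_i)$, extended to $\R_+$ by linear interpolation, and target the joint invariance principle
\[
\Bigl(\tfrac{B_n}{n}\C_{2nt}(\Tc_n),\ \tfrac{B_n}{n}\H_{nt}(\Tc_n),\ \tfrac{1}{\sqrt n}\overline{\mathsf{H}}_{nt}(\Tc_n)\Bigr)_{0\le t\le 1} \xrightarrow[n\to\infty]{} \Bigl(\sqrt 2\,\mathbbm{e}_t,\ \sqrt 2\,\mathbbm{e}_t,\ \tfrac{2\tilde c_\mu}{\sigma_\mu}\,\mathbbm{e}_t\Bigr)_{0\le t\le 1}
\]
in $\mathcal{C}([0,1])^3$, where $\tilde c_\mu := \tfrac{1}{4}(\sigma_\mu^2 + \mu(2\Z_+))$. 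From this the Gromov--Hausdorff convergence of the theorem follows by the correspondence argument of \cref{ssec:finalproof} with $\overline{\mathsf{H}}$ playing the role of $\Hc$.

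Tightness of $(\tfrac{1}{\sqrt n}\overline{\mathsf{H}}_{nt}(\Tc_n))_{0\le t\le 1}$ in $\mathcal{C}([0,1])$ is immediate from the proof of \cref{lem:tightfd}(i). Indeed, since $\Loopb(\tau)$ is obtained from $\Loop(\tau)$ by contracting edges, one has $\dc_{\Loopb(\tau)} \leq \dc_{\Loop(\tau)}$, so the bound of \cref{lem:particulierementutile} passes through, giving $|\overline{\mathsf{H}}_i(\tau) - \overline{\mathsf{H}}_j(\tau)| \leq \W_j(\tau) - \W_i(\tau) + \H_j(\tau) - \H_i(\tau)$ for $u_i$ an ancestor of $u_j$; the rest of the tightness argument applies verbatim.

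The heart of the proof is the finite-dimensional convergence, parallel to \cref{lem:SpinalDecomposition}. The plan is to use the spinal decomposition (\cref{thm:trunk2}) to transfer the question to the size-biased trunk $\Trunk^*_h$. On the spine of $\Trunk^*_h$, each vertex $v^*_{i-1}$ has $X^*_i \sim \mu^*$ children, the spine-child $v^*_i$ sits at a uniform position $U_{X^*_i}$ among them, and the contracted edge around $v^*_{i-1}$ is the one to its lex-last child $c_{X^*_i}$; after contraction, $v^*_{i-1}$ is identified with $c_{X^*_i}$ in a cycle of length $X^*_i$, so the $\Loopb$-distance from $v^*_{i-1}$ to $v^*_i$ equals $\min(U_{X^*_i},\,X^*_i - U_{X^*_i})$ (equal to $0$ exactly when $v^*_i$ is itself the last child). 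A direct computation parallel to the one producing $c_\mu$ then gives
\[
E\bigl[\min(U_{X^*},\,X^* - U_{X^*})\bigr] = \sum_{j\ge 1}\mu(j)\Bigl(\tfrac{j^2}{4} - \tfrac{\mathbbm{1}_{j\text{ odd}}}{4}\Bigr) = \tfrac{1}{4}\bigl(\sigma_\mu^2 + \mu(2\Z_+)\bigr) = \tilde c_\mu.
\]
Because the successive cycles along the spine are disjoint, the total $\Loopb$-distance telescopes, and the strong law of large numbers applied to the i.i.d.\ per-step distances yields $\dc_{\Loopb(\Trunk^*_h)}(\varnothing,v^*_h)/h \to \tilde c_\mu$ in probability. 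Transferring via \cref{thm:trunk2} and combining with $(1/\sqrt n)\H_{U^n}(\Tc_n)\to (2/\sigma_\mu)\mathbbm{e}_U$ (from \cref{prop:cve}) gives $(1/\sqrt n)\overline{\mathsf{H}}_{U^n}(\Tc_n)\to (2\tilde c_\mu/\sigma_\mu)\mathbbm{e}_U$ in probability, which is the analogue of \cref{lem:tightfd}(ii). The main (essentially only) technical obstacle is the contracted-cycle bookkeeping that produces the new constant $\tilde c_\mu$; once that is in hand, everything else proceeds in exact parallel with \cref{thm:CRT}.
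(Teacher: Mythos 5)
Your proposal is correct and is precisely the argument the paper intends (the paper itself only remarks that the proof of \cref{thm:CRT2} ``goes along the same lines'' as that of \cref{thm:CRT} and leaves the details to the reader): the one substantive change is the per-step spine contribution, and your computation $\E\bigl[\min(U_{X^{*}},X^{*}-U_{X^{*}})\bigr]=\tfrac14\bigl(\sigma_\mu^2+\mu(2\Z+)\bigr)$ is exactly the right replacement for $c_\mu=\E\bigl[\min(U_{X^{*}},X^{*}-U_{X^{*}}+1)\bigr]$, while the tightness bound of \cref{lem:particulierementutile} indeed passes through since contraction only decreases distances. Your choice to normalize the spine sum by the height (via $|\Vcn|/\sqrt n\to\tfrac{2}{\sigma_\mu}\mathbbm{e}_U$) rather than by $\W_{U^{n}}$ as in \cref{lem:SpinalDecomposition} is a harmless variant that delivers the stated constant $\tfrac{2}{\sigma_\mu}\cdot\tfrac14(\sigma_\mu^2+\mu(2\Z+))$ directly.
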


\subsection{Applications to random planar maps}

\subsubsection{Maps}
\label{sec:ssmaps}

Recall that a \textit{planar map} is a proper embedding of a finite connected graph in the sphere $\mathbb{S}^2$, viewed up to orientation-preserving homeomorphisms. The faces are the connected components of the complement of the embedding, and the degree $\deg(f)$ of the face $f$ is the number of oriented edges incident to this face. We systematically consider \textit{rooted} maps, i.e., with a distinguished oriented edge called the \textit{root edge}. The face $f_*$ on the right of the root edge is the \textit{root face}. We focus on planar maps \textit{with a boundary}, meaning that the root face is an \textit{external face} whose incident edges and vertices form the \textit{boundary} of the map. The boundary of a map $\m$ is denoted by $\partial\m$ and the degree $\#\partial\m$ of the external face is called the \textit{perimeter} of~$\m$. For technical reasons, it is sometimes simpler to deal with the \textit{scooped-out} map $\Scoop(\m)$, which is obtained from $\partial\m$ by duplicating the edges whose both sides belong to the root face (see Figure \ref{fig:scoop}). Note that $\partial \m$ and $\Scoop(\m)$ define the same metric space.

We also restrict ourselves to \textit{bipartite} maps, in which all the faces have even degree. The set of bipartite maps is denoted by $\mathscr{M}$, and the set of bipartite maps with perimeter $2k$ by $\mathscr{M}_{k}$. By convention, the map $\dagger$ made of a single vertex is the only element of $\mathscr{M}_0$. 

\begin{figure}[!h]
 \begin{center}
 \includegraphics[width=\linewidth]{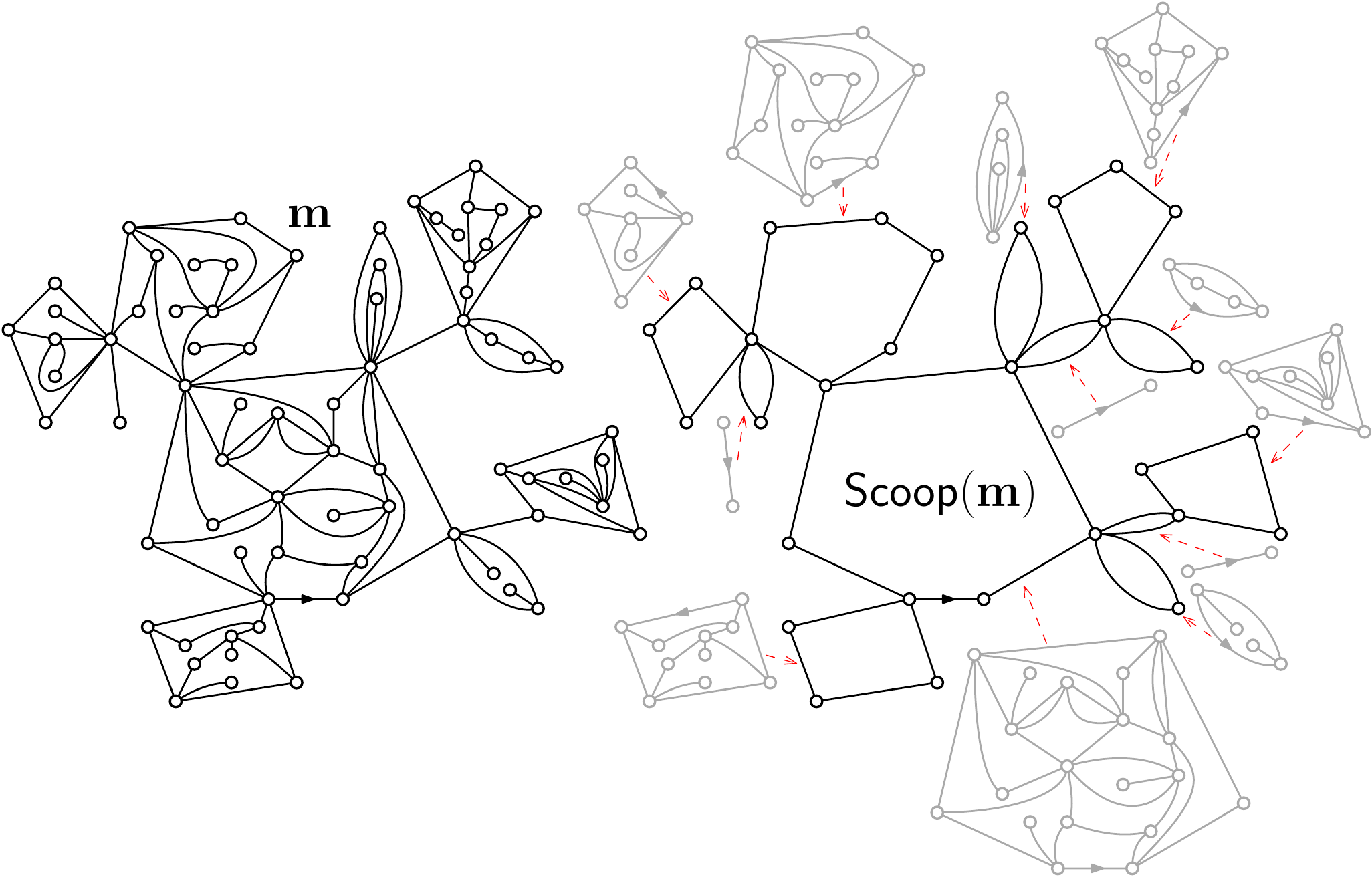}
 \caption{ \label{fig:scoop}A rooted planar map $\m$ and its scooped-out map $  \Scoop( \m)$.}
 \end{center}
 \end{figure}

\subsubsection{Boltzmann distributions} Let us recall the construction of the Boltzmann distributions on bipartite maps and their main properties. We first fix a \textit{weight sequence} $\q=(q_k : k\geq 1)$ of nonnegative real numbers, and define the \textit{Boltzmann weight} of a bipartite map $\m$ by
\begin{equation*}
	w_\q(\m)\coloneqq\prod_{f \in \textup{Faces}(\m)}q_{\deg(f)/2},
\end{equation*} with the convention $w_\q(\dagger)=1$. We will also use the following function introduced in~\cite{MM07}:
\[
	f_\q(x) \coloneqq \sum_{k=1}^{\infty}\binom{2k-1}{k-1}q_k x^{k-1}, \quad x\geq 0.
\] We say that $\q$ is admissible if the equation
\[
	f_\q(x)=1-\frac{1}{x}, \quad x>0
\] has a solution, and the smallest solution is denoted by $Z_\q$. Then, we have $w_\q(\mathscr{M})<\infty$ and the \textit{Boltzmann distribution with weight sequence} $\q$ is the probability measure defined by
\[\Pq(\m)=\frac{w_\q(\m)}{w_\q(\mathscr{M})}, \quad \m\in \mathscr{M}.\] 

 The partition function for bipartite maps with a fixed perimeter and the associated generating function
\begin{equation*}
	F_k\coloneqq\frac{1}{q_k}\sum_{\m\in\mathscr{M}_{k}}w_\q(\m), \quad k\geq 0, \qquad \text{and} \qquad F(x)\coloneqq\sum_{k=0}^\infty F_k x^k, \quad x\geq 0,
\end{equation*} will also play a role (here, the factor $1/q_k$ stands for the fact that the root face receives no weight). The radius of convergence of the latter power series is $r_\q:=(4 Z_\q)^{-1}$.

A powerful tool to study Boltzmann distributions is the Bouttier--Di Francesco--Guitter bijection \cite{BDFG04} that associates to every (pointed) map a tree with labels associated to vertices at even height. The tree associated to a (pointed) Boltzmann map by this bijection is then a so-called \textit{two-type} $\BGW$ tree, whose distribution is given in \cite[Proposition 7]{MM07}. 

The weight sequences $\q$ can then be classified throughout the distribution of this tree, following \cite{MM07,LGM09,BBG12}. This classification can be rephrased as follows: we say that $\q$ is critical if the expected number of vertices of the tree (or, equivalently, of the associated Boltzmann map) is infinite, and subcritical otherwise. Moreover, we say that $\q$ is generic if the offspring distribution of vertices at odd height in the tree (which one can think of as the law of the degrees of the faces in the map) has finite variance, and $\q$ is non-generic with parameter $\alpha \in (1,2)$ if this offspring distribution falls in the domain of attraction of a stable law with parameter $\alpha$. As we mentioned in the Introduction, non-generic critical sequences with parameter $\alpha \in (1,3/2)$ are often called \textit{dense}, while for $\alpha \in (3/2,2)$ they are called \textit{dilute}.

\subsubsection{Proof of Corollaries \ref{cor:ScalingDilute} and \ref{cor:ScalingSubcritical}}

The following result is a direct consequence of \cite[Corollaries 3.4 and 3.7]{Ric17}, combined with \cite[Lemma 4.3]{CK15}.

\begin{lemma}
	Let $\q$ be an admissible weight sequence. For every $k\geq 0$, let $\Mc_{k}$ (resp.\  $\Mc_{\geq k}$) be a Boltzmann map with weight sequence $\q$ conditioned to have perimeter $2k$ (resp.\ at least $2k$). Then, there exists an offspring distribution $\nu$ such that the following identities hold in distribution	\[\Scoop(\Mc_k)  \quad = \quad  \Loopb(\Tc_{2k+1}) \qquad \text{and} \qquad \Scoop(\Mc_{\geq k}) \quad  =   \quad \Loopb(\Tc_{\geq 2k+1}),\] where $\Tc_{2k+1}$ (resp.\ $\Tc_{\geq 2k+1}$) is a $\BGW_\nu$ tree conditioned to have $2k+1$ vertices (resp.\ at least $2k+1$ vertices). 
\end{lemma}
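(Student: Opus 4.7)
The plan is straightforward since we are told explicitly which ingredients to combine, so the proof is essentially a bookkeeping argument that assembles two known bijections.

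First, I would invoke Corollaries 3.4 and 3.7 of \cite{Ric17}. These results, which are obtained from the Bouttier--Di Francesco--Guitter bijection together with a ``forgetting the labels'' operation restricted to the boundary, produce an offspring distribution $\nu$ (depending only on $\q$) such that the boundary structure of a $\q$-Boltzmann map of perimeter $2k$ is encoded by a $\BGW_\nu$ tree. The combinatorial content is that each face along the boundary carries a ``subtree of excursions into the bulk,'' and the counting argument shows that the total number of vertices of the encoding tree is exactly $2k+1$ (the perimeter $2k$ plus the root). Thus conditioning $\Mc_k$ on having perimeter equal to $2k$ (resp.\ at least $2k$) corresponds precisely to conditioning the encoding tree to have exactly $2k+1$ vertices (resp.\ at least $2k+1$ vertices).

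Second, I would apply Lemma 4.3 of \cite{CK15}, which identifies, as planar maps, the scooped-out boundary of a map with the modified looptree $\Loopb$ of its encoding tree. The key point is that the edge contractions in the construction of $\Loopb$ (contracting the edge between a vertex and its last child) are exactly designed to match the identification of duplicated edges in $\Scoop$. Combining this identification with the distributional description from \cite{Ric17}, we obtain in distribution
\[ \Scoop(\Mc_k) = \Loopb(\Tc_{2k+1}) \qquad \text{and} \qquad \Scoop(\Mc_{\geq k}) = \Loopb(\Tc_{\geq 2k+1}). \]

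The main (minor) difficulty is the bookkeeping check that the index shift ``$2k \leadsto 2k+1$'' is consistent across both conditionings and that the offspring distribution $\nu$ is the same in both cases; this is immediate once one observes that conditioning on the perimeter being at least $2k$ only affects the law of the tree through a size constraint, without altering $\nu$ itself. No new probabilistic input is needed beyond the two cited results.
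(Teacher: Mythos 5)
Your proposal matches the paper's own justification, which states the lemma as a direct consequence of \cite[Corollaries 3.4 and 3.7]{Ric17} combined with \cite[Lemma 4.3]{CK15}; your expanded bookkeeping (the $2k \leadsto 2k+1$ shift and the observation that the conditioning only enters through the size constraint on the tree) is exactly the intended assembly of those two results. No gap.
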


The offspring distribution  $\nu$ is given explicitly in \cite[Corollary 3.4]{Ric17}, and we also have the following information.
\begin{itemize}
		\item[--] If $\q$ is subcritical (case $a=3/2$ in \cite{Ric17}), then $\nu$  has mean $m_\nu=1$ and finite variance \[\sigma^2_\nu=\left(\frac{F(r_\q)}{1-Z_\q^2f_\q'(Z_\q)}\right)^2,\]
		see \cite[Lemma 3.5 and (41)]{Ric17}.
		\item[--] If $\q$ is generic critical (case $a=5/2$ in \cite{Ric17}) or dilute (case $a \in (3/2,2)$ in \cite{Ric17}) , then $\nu$ has mean
		\[m_\nu=\frac{1}{1+\frac{F(r_\q)}{2r_\q F'(r_\q)}}<1,\] and falls in the domain of attraction of a stable distribution with parameter parameter $3/2$ (in the generic critical regime) or $\alpha-1/2 \in (1,3/2)$ (in the dilute regime with parameter $\alpha \in (3/2,2)$).
\end{itemize}

Corollaries \ref{cor:ScalingDilute} and \ref{cor:ScalingSubcritical} then immediately follow, with
\[ K_\q= { \frac{2}{ \sigma_\nu}}  \cdot   \frac{1}{4}\left( \sigma_\nu^2+ 1 \right),\]
and
 $J_\q$ a random variable defined by $\Pr{J_{\q} \geq x}=(\tfrac{1-m_\nu}{x})^{\alpha-1/2}$ for $x \geq 1-m_\nu$ (in the dilute regime with parameter $\alpha \in (3/2,2)$) or $\Pr{J_{\q} \geq x}=(\tfrac{1-m_\nu}{x})^{3/2}$ (in the generic critical regime).

\begin{remark}\label{rem}
We can now  discuss more precisely the assumption that the $\BGW$ tree has \textit{at least} $n$ vertices in Theorem \ref{thm:circle}, rather than exactly $n$ vertices. In the above application, the offspring distribution $\nu$ is given in terms of the partition function for maps with a \textit{simple} boundary (that is, with no self-intersections) of fixed perimeter (see \cite[Corollary 3.4]{Ric17}). However, we are not able to obtain an asymptotic behaviour for these quantities (only for the remainder of their sum) as explained in \cite[Remark 2.8]{Ric17}. For this reason, the assumptions of \cite{Kor15} are a priori not satisfied by the offspring distribution $\nu$, which forces us to  use a weaker conditioning (so that the weaker regularity assumption is fulfilled by $\nu$).

Finally, in the non-generic critical regime with parameter $\alpha=3/2$, the probability measure $\nu$ can be either subcritical or critical, and is expected to be in the domain of attraction of a Cauchy distribution. However, the last assertion is only established in \cite[Section 6]{Ric17} for a particular weight sequence $\q$ (and then, $\nu$ is critical, so that Theorem \ref{thm:circle} does not apply).

 \end{remark}

\bibliographystyle{alpha}

\end{document}